\def\noteson{
    \gdef\note##1{\noindent{\color{blue}[##1]}}
    \gdef\todo##1{\noindent{$\dagger$ {\bfseries todo}: {\color{red}##1}}}
    \gdef\justin##1{\emph{\color{green!40!black}##1}}

}
\newcommand*{\horzbar}{\rule[.5ex]{2.5ex}{0.5pt}}
\newcommand{\lightercolor}[3]{
    \colorlet{#3}{#1!#2!white}
}
\newcommand{\darkercolor}[3]{
    \colorlet{#3}{#1!#2!black}
}
\newcommand{\cS}{\mathcal{S}}
\newcommand{\cR}{\mathcal{R}}
\newcommand{\PHT}{\operatorname{PHT}}
\newcommand{\BCT}{\operatorname{BCT}}
\newcommand{\ECT}{\operatorname{ECT}}
\newcommand{\Z}{\mathbb{ Z}}
\newcommand{\rank}{\operatorname{rank}}
\newcommand{\vol}{\operatorname{vol}}
\newcommand{\R}{\mathbb{ R}}
\newcommand{\im}{\operatorname{im}}
\newcommand{\card}{\operatorname{card}}
\newcommand\define[1]{\textbf{#1}}
\newcommand{\Omin}{\mathcal{O}}
\newcommand{\CF}{\text{CF}}
\newcommand{\Rad}{\mathcal{R}}
\newcommand{\CS}{\text{CS}}
\newcommand{\AffGr}{\text{AffGr}}
\newcommand{\Dgm}{\text{Dgm}}
\newcommand{\dom}{\operatorname{dom}}
\newcommand{\St}{\operatorname{St}}
\newcommand{\LwSt}{\operatorname{LwSt}}
\newcommand{\calB}{\mathcal{B}}
\theoremstyle{plain}
\newtheorem{thm}{Theorem}[section]
\newtheorem{lem}[thm]{Lemma}
\newtheorem{prop}[thm]{Proposition}
\newtheorem{cor}[thm]{Corollary}
\newtheorem{defn}[thm]{Definition}
\newtheorem{ex}[thm]{Example}
\newtheorem{rmk-defn}[thm]{Remark}
\newtheorem{rmk-thm}[thm]{Remark}
\newtheorem{rmk-lem}[thm]{Remark}
\newtheorem{rmk-prop}[thm]{Remark}
\numberwithin{equation}{section}
\newcommand*{\vertbar}{\rule[-1ex]{0.5pt}{2.5ex}}
\title[How Many Directions Determine a Shape?]{How Many Directions Determine a Shape \\ and other Sufficiency Results for Two Topological Transforms}
\author{Justin Curry}
\address{Department of Mathematics and Statistics, University at Albany SUNY, Albany, NY USA}
\email{jmcurry@albany.edu}
\author{Sayan Mukherjee}
\address{Departments of Statistical Science, Mathematics, Computer Science, Biostatistics \& Bioinformatics, Duke University; Durham, NC USA}
\email{sayan@stat.duke.edu}
\author{Katharine Turner}
\address{Mathematical Sciences Institute, Australian National University; Canberra, Australia}
\email{katharine.turner@anu.edu.au}
\subjclass[2010]{Primary: 46M20, 52C45; Secondary: 60B05}
\keywords{Euler calculus, persistent homology, statistical shape analysis}
\begin{document}

\begin{abstract}
In this paper we consider two topological transforms that are popular in applied topology: the Persistent Homology Transform (PHT) and the Euler Characteristic Transform (ECT).
Both of these transforms are of interest for their mathematical properties as well as their applications to science and engineering, because they provide a way of summarizing shapes in a topological, yet quantitative, way.
Both transforms take a shape, viewed as a tame subset $M$ of $\R^d$, and associates to each direction $v\in S^{d-1}$ a shape summary obtained by scanning $M$ in the direction $v$.
These shape summaries are either persistence diagrams or piecewise constant integer-valued functions called Euler curves.
By using an inversion theorem of Schapira, we show that both transforms are injective on the space of shapes, i.e.~each shape has a unique transform.
Moreover, we prove that these transforms determine continuous maps from the sphere to the space of persistence diagrams, equipped with any Wasserstein $p$-distance, or the space of Euler curves, equipped with certain $L^p$ norms.
By making use of a stratified space structure on the sphere, induced by hyperplane divisions, we prove additional uniqueness results in terms of distributions on the space of Euler curves.
Finally, our main result proves that any shape in a certain uncountable space of PL embedded shapes with plausible geometric bounds can be uniquely determined using only finitely many directions.
\end{abstract}

\maketitle

\section{Introduction}

In this paper we consider two topological transforms that are of theoretical and practical interest: the Euler Characteristic Transform (ECT) and the Persistent Homology Transform (PHT).
At a high level both of these transforms take a shape, viewed as a subset $M$ of $\R^d$, and associates to each direction $v\in S^{d-1}$ a shape summary obtained by scanning $M$ in the direction $v$.
This process of scanning has a Morse-theoretic and persistent-topological flavor---we study the topology of the sublevel sets of the height function $h_v=\langle v,\cdot\rangle|_M$ as the height varies.
These evolving sublevel sets are summarized using either the \define{Euler Curve}, which records the Euler characteristic of each sublevel set, or the \define{persistence diagram}, which pairs critical values of $h_v$ in a computational way.
In short, the ECT and the PHT are transforms that associate to any sufficiently tame subset $M\subset \R^d$ a map from the sphere to the space of Euler curves and persistence diagrams, respectively.

Before introducing the mathematical properties of these transforms, as well as the results proved in this paper, we point out some of the applications of interest.
In both data science and computational geometry, quantifying differences in shape is a difficult problem.
Part of the problem is structural: most statistical analyses operate on scalar-valued quantities, and it is very hard to summarize shape with a single number.
To address this, Euler curves and persistence diagrams are slightly more complicated summary objects endowed with metrics amenable to detecting qualitative and quantitative differences in shape.
To wit, in~\cite{PHT} the PHT was introduced and applied to the study of heel bones from primates.
By quantifying the shapes of these heel bones and clustering these using certain metrics on persistence diagrams, the authors of that paper were able to automatically differentiate species of primates using quantified differences in phylogenetic expression of heel bone shape.
A variant on the Euler Characteristic Transform was also used in~\cite{tumor} to predict clinical outcomes of brain tumors based on their shape.
Here the applicability of topological methods is particularly compelling: aggressive tumors tend to grow ``roots'' that invade nearby tissues and these protrusions are often well-detected by changes in the Euler curves.

Of course, in each of these applications, one may wonder: To what extent are these shape summaries lossy?
Topological invariants such as Euler characteristic and homology are obviously not faithful; there are non-homeomorphic spaces that appear to be identical when using these particular lenses for viewing shape.
The surprising fact developed in~\cite{Schapira:tom, PHT} and carried further in this paper and independently in~\cite{RG-ECT-inject}, is that by considering the Euler curve for every possible direction one can completely recover a shape.
Said differently, the Euler Characteristic Transform is a statistic that is sufficient to summarize compact, definable subsets of $\R^d$, see Theorem~\ref{thm:ECT-injects}.
Moreover, since homology determines Euler characteristic via an alternating sum of Betti numbers, we obtain injectivity results for the PHT as well, see Theorem~\ref{thm:PHT-injects}.

The choice of our class of shapes is an important variable throughout the paper.
We use ``constructible sets'' at first, these are compact subsets of $\R^d$ that can be constructed with finitely many geometric and logical operations.
The theory for carving out this collection of sets comes from logic and the study of o-minimal/definable structures, as they provide a way of banishing shapes that are considered ``too wild.''
One key property of working in this setting is that
every constructible set can be triangulated. This is the natural setting for Schapira's inversion result~\cite{Schapira:tom}, which is the engine that drives many of the results in our paper, specifically Theorems~\ref{thm:ECT-injects} and~\ref{thm:PHT-injects}, and independently Theorem 5 and Corollary 6 in~\cite{RG-ECT-inject}. 

An additional reason to consider o-minimal sets is that they are naturally stratified into manifold pieces.
This is important because it implies that the constructions that underly the ECT and PHT are also stratified, and that both of these two transforms can be described in terms of constructible sheaves. 
The upshot of this observation, which is developed in Section~\ref{sec:stratified}, is that the ECT of a particular shape $M$ should be determined, in some sense, by finitely many directions, certainly if $M$ is known in advance.
Indeed, any o-minimal set induces a stratification of the sphere of directions, and whenever we restrict to a particular stratum the variation of the transform can be described by homeomorphisms of the real line.
To make precise how to relate the transform for two different directions in a fixed stratum, we specialize to the case where $M=K$ is a piece-wise linearly (PL) embedded simplicial complex.
This allows us to provide an explicit formula for the ECT and construct explicit linear relations for both transforms whenever two directions lie in the same stratum.
This is the content of Lemma~\ref{lem:KxU} and Proposition~\ref{prop:deduce}.

By considering a ``generic'' class of PL embedded simplicial complexes, we leverage the explicit formula for the Euler Characteristic Transform to produce a new measure-theoretic perspective on shapes.
In Theorem~\ref{thm:align} we prove that by considering the pushforward of the Lebesgue measure on $S^{d-1}$ into the space of Euler curves (or persistence diagrams) one can uniquely determine a generic embedded simplicial complex up to rotation and reflection, i.e.~an element of $O(d)$.
The importance of this result for applications is that one can faithfully compare two un-aligned shapes simply by studying their associated distribution of Euler curves or persistence diagrams.

Finally, we extend the themes of stratification theory and injectivity results to answer our titular question, which is perhaps more accurately worded as ``How many directions are needed to infer a shape?''
Here the idea is that there is a shape hidden from view, perhaps cloaked by our sphere of directions, and we would like to learn as much about the shape as possible.
Our mode of interrogation is that we can specify a direction and an oracle will tell us the Euler curve of the shape when viewed in that direction.
Of course, by our earlier injectivity result, Theorem~\ref{thm:ECT-injects}, we know that if we query all possible directions, then we can uniquely determine any constructible shape.
However, a natural question of theoretical and practical importance is whether finitely many queries suffice.
The main result of our paper, Theorem~\ref{thm:finitenum}, shows that if we impose some a priori assumptions on our (uncountable) set of possible shapes, then finitely many queries indeed do suffice.
However, unlike the popular parlor game \emph{Twenty Questions,} the number of questions/directions we might need to infer our hidden shape is only bounded by
\[
\Delta(d,\delta,k_\delta) = \left((d-1) \, k_\delta \, \left(\frac{2\delta}{\sin(\delta)}\right)^{d-1} +1 \right) \left(1+\frac{3}{\delta}\right)^d+O\left(\frac{dk_\delta}{\delta^{d-1}}\right)^{2d}.
\]
The parameters above reflect a priori geometric assumptions that we make about our hidden shape.
Specifically, we assume our shape is well modeled as a geometric simplicial complex embedded in $\R^d$, with a lower bound $\delta$ on the ``curvature'' at every vertex, and a uniform upper bound $k_\delta$  on the number of  critical values when viewed in any given $\delta$-ball of directions. We call this class of shapes $\mathcal{K}(d,\delta,k_\delta)$.
Moreover, the two terms in the above sum are best understood as a two step interrogation procedure.
The first term counts the number of directions that we'd sample the ECT at for any shape.
The second term in the above sum counts the maximum number of questions we might ask, adapted to the answers given in the first round of questions.

\section{Background on Euler Calculus}


Often in geometry and topology we require that our sets and maps have certain tameness properties.
These tameness properties are exhibited by the following categories: piecewise linear, semialgebraic, and subanalytic sets and mappings.
Logicians have generalized and abstracted these categories into the notion of an o-minimal structure ~\cite{vdD}, which we now define.

\begin{defn}\label{defn:o-min}
An \define{o-minimal structure} $\Omin=\{\Omin_d\}$ specifies for each $d\geq 0$, a collection of subsets $\Omin_d$ of $\R^d$ closed under intersection and complement.
These collections are related to each other by the following rules:
\begin{enumerate}
\item If $A\in \Omin_{d}$, then $A\times \R$ and $\R \times A$ are both in $\Omin_{d+1}$; and
\item If $A\in \Omin_{d+1}$, then $\pi(A)\in \Omin_d$ where $\pi:\R^{d+1}\to\R^{d}$ is axis-aligned projection.
\end{enumerate}
We further require that $\Omin$ be closed with respect to all the operations of $\R$ that make it an ordered field, i.e. comparison $(<)$, addition $(+)$ and multiplication $(\cdot)$, and that $\Omin_1$ contain no more and no less than all finite unions of points and open intervals in $\R$.
Elements of $\Omin$ are called \define{tame} or \define{definable} sets.
Our assumptions guarantee that every semialgebraic set is definable~\cite[Ch. 2, 2.11]{vdD}.
A \define{definable map} $f:\R^d \to \R^n$ is one whose graph is definable.
We define a \define{constructible set} as a compact definable subset of $\R^d$ and denote the set of all constructible subsets of $\R^d$ by $\CS(\R^d)$.
\end{defn}

\begin{rmk-defn}
Our assumptions include more than is strictly required by van den Dries's notion of an o-minimal structure, which is defined in \cite[Ch. 1, 2.1]{vdD}, in order to contain the sets we are most interested in, i.e. semialgebraic sets and their generalizations.
What we have specified in Definition~\ref{defn:o-min} is more accurately called the o-minimal structure generated by the ordered field $(\R,<,0,1,-,+,\cdot)$ with parameters/constants in $\R$, cf.~\cite[Ch. 1, \S 5]{vdD}. 
As van den Dries describes in \cite[Ch. 1, 5.1]{vdD} and proves in \cite[Ch. 2, 2.11]{vdD} our notion of an o-minimal structure contains the collection of semialgebraic sets. 
The advantage of o-minimal structures is that one can consider expansions of the collection of semialgebraic sets to include a restricted class of analytic functions, such as the square root function, without sacrificing desired tameness properties.
Further expansions include a notion of exponentiation and the Pfaffian, which by Wilkie's theorem provides a model-complete theory~\cite{Wilkie96}.
\end{rmk-defn}

Tame/definable sets play the role of measurable sets for an integration theory based on Euler characteristic called \define{The Euler Calculus}, see~\cite{EulerTome} for an expository review.
The guarantee that definable sets can be measured by Euler characteristic is by virtue of the following theorem.

\begin{thm}[Triangulation Theorem~\cite{vdD}]\label{thm:triangulation}
Any tame set admits a definable bijection with a subcollection of open simplices in the geometric realization of a finite Euclidean simplicial complex.
Moreover, this bijection can be made to respect a partition of a tame set into tame subsets.
\end{thm}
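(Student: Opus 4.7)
The plan is to derive the theorem from the o-minimal Cell Decomposition Theorem, then upgrade a cell decomposition into a simplicial triangulation by an inductive lifting argument. Since the excerpt only gives the axioms (Definition~\ref{defn:o-min}), the first task is to extract from those axioms the following statement: given any finite family $\{A_i\}$ of definable subsets of $\R^d$, there is a finite partition of $\R^d$ into definable ``cells''---inductively, graphs or open bands over lower-dimensional cells---such that each $A_i$ is a union of cells. This is proved jointly with the Monotonicity Theorem by a simultaneous induction on $d$, using clause (2) of Definition~\ref{defn:o-min} to push fibers through projections and the o-minimality of $\R$ (clause describing $\Omin_1$) to make any definable univariate function piecewise monotone with finitely many critical points.

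Second, I would show that each cell is definably homeomorphic to an open simplex of the same dimension. A $0$-cell is a point; a $k$-cell is a graph over, or a band between graphs over, a $(k-1)$-cell, and straightening the fiber by a definable isomorphism pulls the cell back to a product that can be definably identified with an open standard simplex after a subdivision. At each stage the inductive hypothesis supplies the triangulation of the base.

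The real work, and the main obstacle, is the third step: arranging these cell-by-cell homeomorphisms so that they glue into the geometric realization of a single finite simplicial complex $K$, with the prescribed partition refined by $K$. The standard device is to apply a generic linear change of coordinates so that every coordinate projection $\pi_j:\R^d\to\R^j$ is in general position with respect to the frontiers of all cells in the decomposition. Then, triangulating the image $\pi_{d-1}(\R^d)$ by the inductive hypothesis and lifting fiberwise---subdividing each fiber interval at the finitely many graph values prescribed by cell decomposition and coning to the base---produces a simplicial complex whose open simplices biject with the cells. To secure the ``moreover'' clause one simply includes the pieces of the prescribed partition in the initial family fed to cell decomposition, so the resulting triangulation automatically refines that partition. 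The hard part is ensuring that the barycentric-style subdivisions performed in adjacent fibers align on their common faces without destroying definability; controlling this compatibility is precisely the technical content of van den Dries's proof, and a clean exposition would interleave the induction on $d$ for cell decomposition with the triangulation statement so that each new dimension inherits a compatible simplicial model.
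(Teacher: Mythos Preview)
The paper does not give its own proof of this statement: the Triangulation Theorem is quoted from van den Dries~\cite{vdD} and used as a black box, so there is no in-paper argument to compare against. Your outline is essentially a sketch of the standard proof strategy from that reference (Cell Decomposition plus an inductive lifting to a simplicial complex), and as a roadmap it is accurate; but since the paper treats the result as an imported fact, no proof is expected here.
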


In view of the Triangulation Theorem, one can define the Euler characteristic of a tame set in terms of an alternating count of the number of simplices used in a definable triangulation.

\begin{defn}
If $X\in \Omin$ is tame and $h:X\to \cup \sigma_i$ is a definable bijection with a collection of open simplices, then the \define{definable Euler characteristic} of $X$ is
\[
\chi(X):=\sum_i (-1)^{\dim \sigma_{i}}
\]
where $\dim \sigma_i$ denotes the dimension of the open simplex $\sigma_i$.
We understand that $\chi(\varnothing)=0$ since this corresponds to the empty sum.
\end{defn}

As one might expect, the definable Euler characteristic does not depend on a particular choice of triangulation~\cite[pp.70-71]{vdD}, as it is a definable homeomorphism invariant.
However, as the next example shows, the definable Euler characteristic is not a homotopy invariant.

\begin{ex}
The definable Euler characteristic of the open unit interval $X=(0,1)$ is $-1$.
Note that $X$ is contractible to a point, which has definable Euler characteristic $1$.
The reader might notice that these computations coincide with the compactly-supported Euler characteristic, which can be defined in terms of the ranks of compactly-supported cohomology or Borel-Moore homology groups. However, this equivalence only makes sense for locally compact sets and not all o-minimal sets are locally compact.
\end{ex}

\begin{rmk-defn}
We will often drop the prefix ``definable'' and simply refer to ``the Euler characteristic'' of a definable set. 
\end{rmk-defn}

The definable Euler characteristic satisfies the \define{inclusion-exclusion rule}:

\begin{prop}[\cite{vdD} p.71]
For tame subsets $A,B\in\Omin$ we have
\[
\chi(A\cup B) + \chi(A\cap B) = \chi(A) + \chi(B)
\]
\end{prop}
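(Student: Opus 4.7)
The plan is to reduce the identity to a purely combinatorial statement about open simplices via the Triangulation Theorem, then read off both sides of the equation from a common triangulation.

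First, I would note that the three sets $A\setminus B$, $A\cap B$, and $B\setminus A$ are all tame: the class $\Omin$ is closed under intersection and complement by definition, hence closed under set difference, and closure under union follows (via $A\cup B = (A^c \cap B^c)^c$). Thus $A\cup B$ is a tame set which is partitioned into the three tame subsets $A\setminus B$, $A\cap B$, $B\setminus A$. Applying the Triangulation Theorem to $A\cup B$ with respect to this partition yields a finite Euclidean simplicial complex together with a definable bijection $h: A\cup B \to \bigsqcup_i \sigma_i$ onto a collection of open simplices, such that $h$ restricts to definable bijections on each of the three partition pieces with disjoint subcollections $S_1, S_2, S_3$ of open simplices (corresponding to $A\setminus B$, $A\cap B$, $B\setminus A$ respectively).

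Next, I would observe that since $h$ respects the partition, it also induces definable bijections $A \to \bigsqcup_{\sigma\in S_1\cup S_2} \sigma$ and $B \to \bigsqcup_{\sigma\in S_2\cup S_3} \sigma$ (just by taking the appropriate union of open simplices), and similarly for $A\cup B$ and $A\cap B$. Using the definition of definable Euler characteristic and the fact that it is independent of the chosen triangulation, I can write
\[
\chi(A) = \sum_{\sigma\in S_1}(-1)^{\dim\sigma} + \sum_{\sigma\in S_2}(-1)^{\dim\sigma},
\]
\[
\chi(B) = \sum_{\sigma\in S_2}(-1)^{\dim\sigma} + \sum_{\sigma\in S_3}(-1)^{\dim\sigma},
\]
\[
\chi(A\cup B) = \sum_{\sigma\in S_1}(-1)^{\dim\sigma} + \sum_{\sigma\in S_2}(-1)^{\dim\sigma} + \sum_{\sigma\in S_3}(-1)^{\dim\sigma},
\]
\[
\chi(A\cap B) = \sum_{\sigma\in S_2}(-1)^{\dim\sigma}.
\]
Both $\chi(A\cup B)+\chi(A\cap B)$ and $\chi(A)+\chi(B)$ then equal $\sum_{S_1}(-1)^{\dim\sigma} + 2\sum_{S_2}(-1)^{\dim\sigma} + \sum_{S_3}(-1)^{\dim\sigma}$, establishing the identity. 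The convention $\chi(\emptyset)=0$ handles degenerate cases where $A\cap B$ or one of the differences is empty.

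The only genuine technical point, and thus the ``main obstacle'' if there is one, is invoking the strengthened form of the Triangulation Theorem that respects a prescribed partition into tame subsets—this is exactly the clause ``this bijection can be made to respect a partition of a tame set into tame subsets'' in the version quoted in the paper, so it is available to us. Without that refinement we would have to argue that Euler characteristics computed from possibly incompatible triangulations of $A$ and $B$ still aggregate correctly; with it, the proof reduces to a one-line sorting of terms.
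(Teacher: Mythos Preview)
Your argument is correct and is essentially the standard proof: partition $A\cup B$ into the three tame pieces $A\setminus B$, $A\cap B$, $B\setminus A$, invoke the partition-respecting form of the Triangulation Theorem, and read off the identity as a combinatorial equality of alternating sums over open simplices. Note that the paper itself does not supply a proof of this proposition---it simply cites the result from van den Dries---so there is no in-paper argument to compare against; your write-up is in fact the argument one finds in the cited reference.
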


Consequently, the definable Euler characteristic specifies a valuation on any o-minimal structure, i.e.~it serves the role of a measure without the requirement that sets be assigned a non-negative value.
This allows us to develop an integration theory called Euler calculus that is well-defined for so-called constructible functions, which we now define.

\begin{defn}
A \define{constructible function} $\phi:X \to \Z$ is an integer-valued function on a tame set $X$ with the property that every level set is tame and only finitely many level sets are non-empty.
The set of constructible functions with domain $X$, denoted $\CF(X)$, is closed under pointwise addition and multiplication, thereby making $\CF(X)$ into a ring.
\end{defn}


\begin{defn}
The \define{Euler integral} of a constructible function $\phi:X \to \Z$ is the sum of the Euler characteristics of each of its level-sets, i.e.
\[
\int \phi \, d\chi := \sum_{n=-\infty}^{\infty} n \cdot \chi(\phi^{-1}(n)).
\]
Note that constructibility of $\phi$ implies that only finitely many of the $\phi^{-1}(n)$ are non-empty so this sum is well-defined. 
\end{defn}

Like any good calculus, there is an accompanying suite of canonical operations in this theory---pullback, pushforward, convolution, etc.

\begin{defn}
Let $f: X \to Y$ be a tame mapping between between definable sets.
Let $\phi_Y: Y \to \Z$ be a constructible function on $Y$.
The \define{pullback} of $\phi_Y$ along $f$ is defined pointwise by
\[
f^*\phi_Y(x)=\phi_Y(f(x)).
\]
The pullback operation defines a ring homomorphism $f^*:\CF(Y) \to \CF(X)$.
\end{defn}

The dual operation of pushing forward a constructible function along a tame map is given by integrating along the fibers. 

\begin{defn}
The \define{pushforward} of a constructible function $\phi_X:X \to \Z$ along a tame map $f:X\to Y$ is given by
\[
f_*\phi_X(y)=\int_{f^{-1}(y)} \phi_X d\chi.
\]
This defines a group homomorphism $f_*: \CF(X) \to \CF(Y)$.
\end{defn}

Putting these two operations together allows one to define our first topological transform: the Radon transform.

\begin{defn}
Suppose $S\subset X \times Y$ is a locally closed definable subset of the product of two definable sets.
Let $\pi_X$ and $\pi_Y$ denote the projections from the product onto the indicated factors.
The \define{Radon transform with respect to $S$} is the group homomorphism $\Rad_S : \CF(X) \to \CF(Y)$ that takes a constructible function on $X$, $\phi:X \to \Z$, pulls it back to the product space $X\times Y$, multiplies by the indicator function of $S$ before pushing forward to $Y$. 
In equational form, the Radon transform is
\[
\Rad_S (\phi):=\pi_{Y*} [(\pi_X^*\phi) 1_S].
\]
\end{defn}

The following inversion theorem of Schapira~\cite{Schapira:tom} gives a topological criterion for the invertibility of the transform $\Rad_S$ in terms of the subset $S\subset X \times Y$.

\begin{thm}[\cite{Schapira:tom} Theorem 3.1]\label{thm:inversion}
If $S\subset X\times Y$ and $S'\subset Y\times X$ have fibers $S_x$ and $S'_x$ in $Y$ satisfying
\begin{enumerate}
	\item $\chi(S_x\cap S'_x)=\chi_1$ for all $x\in X$, and
	\item $\chi(S_x\cap S'_{x'})=\chi_2$ for all $x'\neq x \in X$,
\end{enumerate}
then for all $\phi\in \CF(X)$,
\[
(\Rad_{S'} \circ \Rad_{S})\phi = (\chi_1-\chi_2)\phi +\chi_2 \left(\int_X \phi d\chi\right) 1_X .
\]
\end{thm}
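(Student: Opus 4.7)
The plan is to expand the left-hand side directly using the definitions of pullback, pushforward, and the Radon transform, and then reduce everything to an integration against the Euler-characteristic-valued kernel $(x,x') \mapsto \chi(S_x \cap S'_{x'})$ via a Fubini-type theorem for Euler integrals.

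First I would unpack the composition pointwise. For $x' \in X$, directly from the definition of $\Rad_{S'}$ as pushforward along $\pi_X$ of the pullback multiplied by $\mathbf{1}_{S'}$, we have
\[
(\Rad_{S'} \circ \Rad_S)\phi(x') = \int_Y \left( \int_X \phi(x)\, \mathbf{1}_S(x,y)\, d\chi(x) \right) \mathbf{1}_{S'}(y,x')\, d\chi(y).
\]
The next step is to invoke the Fubini theorem for Euler integrals (valid in the constructible setting because all sets involved are definable) to interchange the two integrals, obtaining
\[
(\Rad_{S'} \circ \Rad_S)\phi(x') = \int_X \phi(x) \left( \int_Y \mathbf{1}_S(x,y)\, \mathbf{1}_{S'}(y,x')\, d\chi(y) \right) d\chi(x).
\]
The inner integral is, by definition of the Euler integral of an indicator function, just $\chi(S_x \cap S'_{x'})$.

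Now I would use the two hypotheses on the fibers to split this kernel. Writing
\[
\chi(S_x \cap S'_{x'}) = \chi_2 + (\chi_1 - \chi_2)\, \mathbf{1}_{\Delta}(x,x'),
\]
where $\Delta \subset X\times X$ is the diagonal, the expression for $(\Rad_{S'} \circ \Rad_S)\phi(x')$ breaks into two pieces. The $\chi_2$ term pulls out of the integral and produces $\chi_2 \bigl(\int_X \phi\, d\chi\bigr)$, which is the value at $x'$ of $\chi_2 \bigl(\int_X \phi\, d\chi\bigr) \mathbf{1}_X$. The remaining term is
\[
(\chi_1 - \chi_2) \int_X \phi(x)\, \mathbf{1}_{\{x = x'\}}\, d\chi(x) = (\chi_1 - \chi_2)\, \phi(x') \cdot \chi(\{x'\}) = (\chi_1 - \chi_2)\, \phi(x'),
\]
using that a point has definable Euler characteristic $1$. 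Summing the two contributions yields the claimed identity.

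The only real subtlety, and thus the main obstacle, is the rigorous justification of the Fubini interchange and of the point-mass evaluation $\int_X \phi(x)\, \mathbf{1}_{\{x=x'\}}\, d\chi(x) = \phi(x')$. Both follow from standard results on Euler calculus for constructible functions and definable maps (the Fubini theorem of Viro, which applies here because $S$, $S'$, and $\phi$ are constructible and the projections $\pi_X, \pi_Y$ are definable), but one must verify that the integrand $(x,y) \mapsto \mathbf{1}_S(x,y)\, \mathbf{1}_{S'}(y,x')$ is constructible on $X \times Y$, which is immediate from the definable hypotheses on $S$ and $S'$. Once Fubini is in hand, the rest is pure bookkeeping with the two constants $\chi_1, \chi_2$.
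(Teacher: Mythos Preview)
The paper does not actually prove this theorem; it is quoted from Schapira's original article \cite{Schapira:tom} and used as a black box in the subsequent sections. So there is no ``paper's own proof'' to compare against.

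That said, your argument is correct and is essentially the standard proof (the one Schapira gives): unwind the double transform as an iterated Euler integral, apply Fubini for constructible functions to obtain the kernel $\chi(S_x\cap S'_{x'})$, and then split that kernel along the diagonal using the two hypotheses. Your identification of the only genuine technical input---the Fubini theorem for Euler integration over definable sets---is accurate, and once that is granted the rest is indeed bookkeeping. Nothing is missing.
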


In the next section we show how to use Schapira's result to deduce the injectivity properties of the Euler Characteristic and Persistent Homology Transforms.

\section{Injectivity of the Euler Characteristic Transform}\label{sec:ECT-injects}

In the previous section we introduced background on definable sets, constructible functions and the Radon transform.
We now specialize this material to the study of persistent-type topological transforms on definable subsets of $\R^n$.
What makes these transforms ``persistent'' is that they study the evolution of topological invariants with respect to a real parameter.
In this section we begin with the simpler of these two invariants, the Euler characteristic.

\begin{defn}
The \define{Euler Characteristic Transform} takes a constructible function $\phi$ on $\R^d$ and returns a constructible function on 
$S^{d-1}\times \R$ whose value at a direction $v$ and real parameter $t\in\R$ is the Euler integral of the restriction of $\phi$ to the half space $x\cdot v \leq t$.
In equational form, we have
\[
\ECT: \CF(\R^d) \to \CF(S^{d-1} \times \R) \qquad \text{where} \qquad 
\ECT(\phi)(v,t):=\int_{x\cdot v \leq t} \phi \, d\chi.
\]
For our applications of interest, it suffices to restrict this transform to compact definable subsets of $\R^d$ (which we call constructible sets), where we identify a definable subset $M\in \Omin_d$ with its associated constructible indicator function $\phi=1_M$.
When we fix $v\in S^{d-1}$ and let $t\in\R$ vary, we refer to $\ECT(M)(v,-)$ as the \define{Euler curve} for direction $v$.
This allows us to equivalently view the Euler Characteristic Transform for a fixed $M\in \CS(\R^d)$ as a map from the sphere to the space of Euler curves (constructible functions on $\R$). 
\[
	\ECT(M): S^{d-1} \to \CF(\R)  \qquad \text{where} \qquad 
\ECT(M)(v)(t):=\chi(M\cap \{x \mid x\cdot v \leq t\}).
\]
The proof that the Euler curves are constructible is detailed in Lemma~\ref{lem:constructible-ECT}.
We write $\ECT(M)(v)(\infty)$ as shorthand for the limit of $\ECT(M)(v)(t)$ as $t$ goes to positive infinity. 
This limit exists by compactness and definability of $M$. 
Note that for every direction $v$ this limit is the total Euler characteristic of $M$, i.e. $\ECT(M)(v)(\infty) = \chi(M)$.
\end{defn}

\begin{rmk-defn}\label{rmk:compact-omin}
The restriction to constructible sets $\CS$ permits us to ignore the difference between ordinary and compactly-supported Euler characteristic. This is because the intersection of a compact definable set $M$ and the closed half-space $\{x\in \R^d \mid x\cdot v \leq t\}$ is compact and definable, and these two versions of Euler characteristic agree on compact sets.
This restriction is not strictly necessary, but would require a reworking of certain aspects of persistent homology (reviewed in Section~\ref{sec:PHT-injects}), which is beyond the scope of this paper.
\end{rmk-defn}

The reader may wish to pause to consider why the ECT associates to a definable set $M$, viewed as the constructible function $\phi=1_M$, a constructible function on $S^{d-1}\times \R$. This, along with other finiteness concerns, is addressed by the following lemma, which in turn depends on two versions of the Trivialization Theorem of o-minimal geometry.

\begin{thm}[The Trivialization Theorems of~\cite{vdD}]\label{thm:trivialization-theorems}
If $S\subseteq \R^n$ and $A\subseteq \R^m$ are definable subsets and $f:S\to A$ is a continuous definable map, then the \textbf{Trivialization Theorem}~\cite[Ch. 9, 1.2]{vdD} states that there is a finite partition of $A$ into finitely many definable subsets $A_1,\ldots, A_M$ such that over each $A_i$ there is a definable subset $F_i\subseteq \R^N$, for some $N$, a definable map $\lambda_i:f^{-1}(A_i) \to F_i$, such that the product map $(f_i,\lambda_i)$, where $f_i:=f|_{f^{-1}(A_i)}$, is a homeomorphism making the following diagram commute:
\[
	\xymatrix{f^{-1}(A_i) \ar[rr]^{(f_i,\lambda_i)} \ar[rd]_{f_i} & & A_i\times F_i \ar[ld]^{\pi_1} \\ & A_i & }
\]
Moreover, the \textbf{Trivialization Theorem with Distinguished Subsets}~\cite[Ch. 9, 1.7]{vdD} states that if $S$ has definable subsets $S_1,\ldots, S_k$, then we can refine the partition of $A$ to guarantee that for each $A_i$ there exists definable subsets $F_1,\ldots, F_k$ of $F$ with $(f_i,\lambda_i)(f^{-1}(A_i)\cap S_j)=A_i\times F_j$. In other words, we can refine the fiber to respect the distinguished subsets of $S$.
\end{thm}

\begin{lem}[Finitely Many Topological Changes]\label{lem:constructible-ECT}
For any definable set $M\subseteq \R^d$ and any direction $v\in S^{d-1}$ the topological, i.e. homeomorphism, type of the sublevel sets $M_{v,t}=M\cap \{x \mid x\cdot v \leq t\}$ can only change finitely many times as a function of $t$. Moreover there is a uniform bound $\kappa_M \geq 0$, depending on $M$, on the number of changes in the topological type when viewed in any direction. 
As a consequence, for any direction $v\in S^{d-1}$ the Euler curve $\ECT(M)(v,-)$ is a constructible function over $\R$.
\end{lem}
\begin{proof}
One can associate to a definable $M\subseteq \R^d$ another definable set
\[
	X_M:=\{(x,v,t)\in M\times S^{d-1}\times \R \mid x\cdot v\leq t\}.
\]
To see that $X_M$ is definable we note that $S^{d-1}$ is the set of points satisfying 
$$x_1^2+\cdots x_d^2=1$$ making it an algebraic set, which by \cite[Ch. 2, 2.11]{vdD} is definable.
By taking products we can conclude that $\R^d\times S^{d-1} \times \R$ is definable as well.
The collection of half-spaces $H:=\{(x,v,t) \in \R^d\times S^{d-1}\times \R \mid x\cdot v \leq t\}$ is also semialgebraic and hence definable.
Consequently, the intersection of $M\times S^{d-1}\times \R$ and $H$, which is $X_M$, must be definable as well.
Finally, by our Definition~\ref{defn:o-min} we know the projection map onto the second and third factors $\pi_{2,3}:\R^d\times \R^d\times \R \to \R^d\times \R$ is definable and 
hence, by~\cite[Ch. 1, 2.3(ii)]{vdD}, the restriction of this projection map to $X_M$, i.e.
\[
	\pi: X_M \to S^{d-1}\times \R \qquad \text{where} \qquad \pi^{-1}(v,t)=M_{v,t}
\]
is a continuous definable map as well.

By applying the Trivialization Theorem~\cite[Ch. 9, 1.2]{vdD} to $\pi$ we know that there is a finite partition of $S^{d-1}\times \R$ into definable sets $A_1,\ldots, A_n$ over which the map $\pi:X_M \to S^{d-1}\times \R$ is definably trivial, i.e. the map restricts to a trivial fiber bundle over each $A_i$.
The trivialization over the sets $A_i$ guarantees that the topological type of the sublevel sets $M_{v,t}$ changes only finitely many times when viewed as a function of $v$ and $t$, but the statement of the lemma asks for finiteness when viewing the shape $M$ in a fixed direction $v$.
To obtain the statement we want, we view the $A_i$ as distinguished subsets of $S^{d-1}\times \R$ and then apply the Trivialization Theorem with Distinguished Subsets~\cite[Ch. 9, 1.7]{vdD} to the projection map $\pi_1: S^{d-1}\times\R \to S^{d-1}$.
This provides us with a partition of the sphere $S^{d-1}$ into definable subsets $B_1,\ldots, B_m$ so that over each $B_i$ there are $n$ definable subsets $F_{i1},\ldots, F_{in}$ of $\R$ so that the trivialization carries each $\pi_1^{-1}(B_i)\cap A_j$ to $B_i\times F_{ij}$.
We note that each $F_{ij}$ need not be connected, but each $F_{ij}$ will be a finite union of singleton sets and connected intervals, i.e.~definable $0$ and $1$-cells.
The number of changes in the topology of $M_{v,t}$ for any $v\in B_i$ is then bounded by the sum of the number of $0$ and $1$-cells in the partition $F_{i1},\ldots, F_{in}$ of $\R$.
Taking the maximum of this number over $B_i,\ldots, B_m$ provides the uniform bounded $\kappa_M$ on the number of topological changes when viewed in any direction.
\end{proof}

This proof of Lemma~\ref{lem:constructible-ECT} provides an important alternative characterization of the Euler Characteristic Transform: The ECT is simply the pushforward of the indicator function of $X_M$ along the definable map $\pi$, which defines a constructible function on $S^{d-1}\times \R$.

We now use Schapira's inversion theorem, recalled as Theorem~\ref{thm:inversion}, to prove our first injectivity result.


\begin{thm}\label{thm:ECT-injects}
Let $\CS(\R^d)$ be the set of constructible sets, i.e.~compact definable sets.
The map $\ECT: \CS(\R^d) \to \CF(S^{d-1} \times \R)$ is injective.
Equivalently, if $M$ and $M'$ are two constructible sets that determined the same association of directions to Euler curves, then they are, in fact, the same set.
Said symbolically:
\[
\ECT(M)=\ECT(M'): S^{d-1} \to \CF(\R) \qquad \Rightarrow \qquad M=M'
\]
\end{thm}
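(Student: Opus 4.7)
The plan is to realize the Euler Characteristic Transform as a Radon transform and invoke Schapira's inversion theorem (Theorem~\ref{thm:inversion}). Taking $X = \R^d$ and $Y = S^{d-1} \times \R$, I would introduce the closed definable incidence relation
\[
S = \{(x, v, t) \in X \times Y : x \cdot v \leq t\}
\]
and observe, by unpacking the definition of $\Rad_S$, that $\ECT(M) = \Rad_S(1_M)$ for any $M \in \CS(\R^d)$.

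To invert, I would pair this with the ``dual'' incidence
\[
S' = \{((v, t), x) \in Y \times X : x \cdot v \geq t\}
\]
and compute the Euler characteristics of the intersections $S_x \cap S'_{x'} = \{(v, t) \in S^{d-1} \times \R : x \cdot v \leq t \leq x' \cdot v\}$. Projecting onto $S^{d-1}$ and applying Fubini for Euler calculus, the nonempty fibers are closed intervals in $\R$, so the computation reduces to $\chi(\{v \in S^{d-1} : (x' - x) \cdot v \geq 0\})$. For $x = x'$ this is the whole sphere, yielding $\chi_1 = 1 + (-1)^{d-1}$; for $x \neq x'$ it is a closed hemisphere, which is contractible, yielding $\chi_2 = 1$. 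The essential point is that $\chi_1 - \chi_2 = (-1)^{d-1}$ is nonzero in every dimension.

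Schapira's theorem then gives the identity
\[
\Rad_{S'}\Rad_S(\phi) = (-1)^{d-1} \phi + \left(\int_{\R^d} \phi \, d\chi\right) 1_{\R^d}
\]
for every $\phi \in \CF(\R^d)$. Assuming $\ECT(M) = \ECT(M')$, linearity gives $\Rad_S(1_M - 1_{M'}) = 0$, and applying $\Rad_{S'}$ yields
\[
(-1)^{d-1}(1_M - 1_{M'}) = (\chi(M') - \chi(M)) \, 1_{\R^d}.
\]
The left-hand side has compact support (since $M$ and $M'$ are compact), while the right-hand side is a constant function on the non-compact space $\R^d$, so both sides must vanish identically. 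Hence $1_M = 1_{M'}$ and $M = M'$.

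The subtle step is the choice of $S'$: pairing the ``$\leq$'' half-space in $S$ with its reverse ``$\geq$'' half-space in $S'$ is what guarantees $\chi_1 - \chi_2 \neq 0$ uniformly in $d$ (other natural candidates would collapse this difference in odd or even dimensions). Once $S'$ is in hand, the remainder is a short Fubini computation of $\chi(S_x \cap S'_{x'})$ together with the observation that a constant function on $\R^d$ with compact support must be identically zero.
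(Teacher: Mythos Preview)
Your proof is correct and is arguably more direct than the paper's. Both arguments ultimately rest on Schapira's inversion theorem, but the two proofs choose different target spaces $Y$ and different incidence relations.

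The paper takes $Y=\AffGr_d$, the space of affine hyperplanes, with the point-on-hyperplane incidence $S=\{(x,W):x\in W\}$. Since $\ECT$ is defined via half-spaces rather than hyperplanes, the paper first uses inclusion--exclusion,
\[
\chi(M\cap\{x\cdot v=t\})=\ECT(M)(v,t)+\ECT(M)(-v,-t)-\ECT(M)(v,\infty),
\]
to show that $\ECT(M)$ determines the hyperplane Radon transform $\Rad_S(1_M)$. The relevant fibers $S_x\cap S'_{x'}$ are then $\R P^{d-1}$ and $\R P^{d-2}$, yielding $\chi_1-\chi_2=(-1)^{d-1}$, and the paper concludes by observing separately that $\ECT(M)=\ECT(M')$ forces $\chi(M)=\chi(M')$.

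You instead take $Y=S^{d-1}\times\R$ with the half-space incidence itself, so that $\ECT=\Rad_S$ on the nose and no preliminary inclusion--exclusion step is needed. Your dual relation $S'$ (reversing the inequality) gives fibers $S_x\cap S'_{x'}$ that are a copy of $S^{d-1}$ when $x=x'$ and a closed hemisphere when $x\neq x'$; the same difference $\chi_1-\chi_2=(-1)^{d-1}$ emerges. Your endgame is also slightly cleaner: rather than invoking $\chi(M)=\chi(M')$ as a separate observation, you note that a compactly supported function cannot equal a nonzero constant on $\R^d$.

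Both routes land on the same invertibility constant $(-1)^{d-1}$; the paper's version has the virtue of connecting $\ECT$ to the classical hyperplane Radon transform, while yours avoids the detour through $\AffGr_d$ entirely.
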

\begin{proof}
Let $M\in \CS(\R^d)$. 
Let $W$ be the hyperplane defined by $\{x\cdot v = t\}$.
By the inclusion-exclusion property of the definable Euler characteristic
\begin{eqnarray*}
\chi(M \cap W) &=& \chi(\{ x\in M: x\cdot v=t\})\\
& = &\chi(\{ x\in M: x\cdot v\leq t\} \cap \{ x\in M: x\cdot (-v) \leq -t\} )\\
& = &\chi(\{ x\in M: x\cdot v\leq t\}) + \chi( \{ x\in M: x\cdot (-v) \leq -t\} ) - \chi(M)\\
& = &\ECT(M)(v,t) + \ECT(M)(-v,-t) - \ECT(M)(v)(\infty)
\end{eqnarray*}
This means that from $\ECT(M)$ we can deduce the $\chi(M \cap W)$ for all hyperplanes $W\in \AffGr_d$.

Let $S$ be the subset of $\R^d \times \AffGr_d$ where $(x,W)\in S$ when $x$ is in the hyperplane $W$. 
For simplicity, we denote the projection to $\R^d$ by $\pi_1$ and the projection to $\AffGr_d$ by $\pi_2$.
For this choice of $S$ the Radon transform of the indicator function $1_M$ for $M \in \Omin_d$ at $W\in \AffGr_d$ is 
\begin{eqnarray*}
(\Rad_{S}1_M)(W)&=& (\pi_2)_* [(\pi_1^*1_M)1_S](W) \\
&=&\int_{(x,W)\in S} (\pi_1^* 1_M) \, d\chi\\
&=&\int_{x\in M \cap W} (\pi_1^* 1_M) \, d\chi\\
&=&\chi(M \cap W)
\end{eqnarray*}
This implies that from $\ECT(M)$ we can derive $\cR_\cS(1_M)$.

Similarly let $S'$ be the subset of $\AffGr_d \times \R^d$ where $(W,x)\in S'$ when $x$ is in the hyperplane $W$.
For all $x\in \R^d$, $S_x\cap S'_x$ is the set of hyperplanes that go through $x$ and hence is $S_x\cap S'_x=\R P^{d-1}$ and $\chi(S_x\cap S'_x)=\frac{1}{2}(1+(-1)^{d-1})$.  
For all $x\neq x'\in \R^d$, $S_x\cap S'_{x'}$ is the set of hyperplanes that go through $x$ and $x'$ and hence is $S_x\cap S'_{x'}=\R P^{d-2}$ and $\chi(S_x\cap S'_{x'})=\frac{1}{2}(1+(-1)^{d-2})$.  
Applying Theorem~\ref{thm:inversion} yields
$$(\Rad_{S'}\circ \Rad_{S})(1_M)=(-1)^{d-1} 1_M + \frac{1}{2}(1+(-1)^{d-2}) \chi(M)1_{\R^d}.$$

Note that if $\ECT(M)=\ECT(M')$ then $\Rad_{S} 1_M=\Rad_{S} 1_{M'}$, since $\ECT$ determines the Euler characteristic of every slice.
Moreover, if $\ECT(M)=\ECT(M')$, then $\chi(M)=\chi(M')$, which by inspecting the inversion formula above further implies that $1_M=1_{M'}$ and hence $M=M'$.
\end{proof}

\section{Injectivity of the Persistent Homology Transform}\label{sec:PHT-injects}

The primary transform of interest for this paper is the Persistent Homology Transform, which was first introduced in~\cite{PHT} and was initially defined for a PL embedded simplicial complex in $\R^d$.
The reader is encouraged to consult~\cite{PHT} (and~\cite{Elevate} for a related precursor) for a more complete treatment of the PHT in that setting. Here, we illustrate how this transform can be defined for any constructible set $M\in \CS(\R^d)$.

As already noted, given a direction $v\in S^{d-1}$ and a value $t\in\R$, the sublevel set $M_{v,t}:=\{x\in M \mid x\cdot v\leq t\}$ is the intersection of the constructible set $M$ with a closed half-space.
This intersection has various topological summaries, one of them being the (definable) Euler Characteristic $\chi$.
One can also consider the (cellular) homology with field coefficients $H_k$, which is defined in each degree $k\in \{0,1,...,n\}$. 
These are vector spaces that summarize topological content of any suitably nice topological space $X$, which in this paper will always be spaces of the form $M_{v,t}$.
In low degrees the interpretation of these homology vector spaces for a space $X$ are as follows:
$H_0(X)$ is a vector space with basis given by connected components, $H_1(X)$ is a vector space spanned by ``holes'' or closed loops that are not the boundaries of embedded disks, $H_2(X)$ is a vector space spanned by ``voids'' or closed two-dimensional (possibly self-intersecting) surfaces that are not the boundaries of an embedded three dimensional space.
The higher homologies $H_k(X)$ are understood by analogy: these are vector spaces spanned by closed (i.e.~without boundary) $k$-dimensional subspaces of $X$ that are themselves not the boundaries of $k+1$-dimensional spaces.
The dimension of $H_k(X)$ is called the \define{Betti number} $\beta_k(X)$, which for $X\subseteq \R^d$ always satisfy $\beta_k(X) = 0$ for $k \geq d$.

The proof that ordinary
 Euler characteristic is a topological invariant is best understood via homology.
Indeed, the Betti numbers determine the Euler characteristic via an alternating sum:
\[
	\chi(X)=\beta_0(X) - \beta_1(X) + \beta_2(X) - \beta_3(X) + \cdots
\]
However, one feature that homology enjoys that the Euler characteristic does not is \emph{functoriality}, which is the property that any continuous transformation of spaces $f:X\to Y$ induces a linear transformation of homology vector spaces $f_k : H_k(X) \to H_k(Y)$ for each degree $k$.
This is the key feature that defines sublevel set persistent homology.

\begin{defn}
Let $M\in \CS(\R^d)$ be a compact definable set. 
For any vector $v\in \R^d$ define the \define{height function in direction $v$},  $h_v(x)=\langle v,x \rangle$ as the restriction of the inner product $\langle v,\cdot\rangle$ to points $x\in M$.
The \define{sublevel set persistent homology group} in degree $k$ between $s$ and $t$ is 
\[
	PH_k(M,h_v)(s,t)=\im \iota^{s\to t}_k: H_k(M_{v,s}) \to H_k(M_{v,t}).
\]
\end{defn}

The remarkable feature of persistent homology is that one can encode the persistent homology groups for every pair of values $s\leq t$ using a finite number of points in the extended plane.
This is done via the persistence diagram.

\begin{defn}
Let $\R^{2+}$ be the part of the extended plane that is above the diagonal, i.e.~$\R^{2+}:=\{(b,d) \in \{-\infty\} \cup \R)\times (\R\cup\{\infty\}) \mid b \leq d\}$.
The \define{persistence diagram} in degree $k$ associated to $M\in \CS(\R^d)$ when filtered by sublevel sets of $h_v(x)=\langle v,x \rangle$ is the \emph{unique} finite multi-set of points $\calB_k=\{\left((b_i^k,d_i^k);n_i\right)\} \subset \R^{2+}$ with the property that for every pair of values $s\leq t$ the following equality holds
\[
\dim PH_k(M,h_v)(s,t) := \rank \iota^{s\to t}_k = \card \{PH_k(M,h_v) \cap [-\infty,s]\times(t,\infty]\}.
\]
Since the the persistence diagram $\calB_k$ completely encodes the ranks of the sublevel set persistent homology groups $PH_k(M,h_v)$, we will pass between these two notations freely, depending on what needs to be emphasized.
We further note that the persistence diagram is also called the \define{barcode}, which relies on the interpretation of each point $(b,d)\in \R^{2+}$ as an interval $I=[b,d)\subseteq \R$ where $b=\inf I$ and $d=\sup I$.
\end{defn} 

\begin{rmk-defn}[Persistence and o-Minimality]\label{rmk:omin-pers}
The existence of the persistence diagram is a non-trivial result and depends on certain tameness properties of the maps $\iota^{s\to t}_k$.
Finite dimensionality of all of the homology vector spaces $H_k(M_{v,t})$ suffices, but in the o-minimal setting things are even better behaved because the dimension of the persistent homology groups can only change finitely times.
Recall that the Triangulation Theorem, Theorem~\ref{thm:triangulation}, guarantees that for each $v$ and $t$ $M_{v,t}$ has a finite decomposition into cells and so, the cellular homology is finite.
The fact that the topology of $M_{v,t}$ can only change finitely many times as a function of $t$ was proved in Lemma~\ref{lem:constructible-ECT}.
\end{rmk-defn}

To consider persistence diagrams $PH_k(M,h_v)$ associated to different directions $v\in S^{d-1}$ we consider the set of all persistence diagrams, which can be topologized in several ways, as explained after the following definition.

\begin{defn}
\define{Persistence Diagram Space}, written \text{Dgm}, is the set of all possible countable multi-sets of $\R^{2+}:=\{(b,d) \in (\{ - \infty\} \cup \R)\times (\R\cup\{\infty\}) \mid b \leq d\}$ where the number of points of the form $(b, \infty)$ and $(-\infty, d)$ are finite and $\sum_{d-b<\infty} d-b <\infty$.
Points of the form $(b,\infty)$ or $(-\infty,d)$ are called \define{essential classes} and points of the form $(b,d)$ where neither coordinate is $\infty$ are called \define{inessential classes}.
For persistence diagrams that encode the sublevel set persistent homology $PH_k(M,h_v)$ of a constructible set $M$ there are no points of the form $(-\infty,d)$.
\end{defn}


We have used the term ``space'' with the implication that there is a topology on the set of all persistence diagrams. 
Indeed this topology comes from various choices of metrics on the set of persistence diagrams, which are phrased in terms of matchings of points between two persistence diagrams. 
Since a persistence diagram is technically a multiset, we append an additional coordinate to serve as a labelling index so that each $\calB$ can be regarded as a genuine set.

\begin{defn}\label{def:Wasserstein-p-distance}
Suppose $\mathcal{B}=\{(I;j) \mid (I;j) \in \R^{2+}\times \mathbb{N}\}$ and $\mathcal{B}'=\{(I';j) \mid (I';j) \in \R^{2+}\times \mathbb{N}\}$
are two persistence diagrams, viewed as sets rather than multisets; i.e. $(I;j)$ is to be interpreted as the $j^{th}$ copy of the interval $I$.
A \define{matching} is a partial bijection $\sigma:\mathcal{B}\to\mathcal{B'}$, i.e. a choice of subset $\mathcal{M}\subseteq \calB$, called the domain $\dom(\sigma)$, and an injection $\sigma:\mathcal{M}\to\calB'$.
We write the complement of the domain of $\sigma$ as $\dom^c(\sigma):=\calB-\dom(\sigma)$ and the complement of the image of $\sigma$ as $\im^c(\sigma):=\calB'-\im(\sigma)$; collectively these are called the \define{unmatched points} of $\sigma$.

For this paper we will always promote a partial bijection $\sigma$ to an actual bijection via the introduction of diagonal images.
For a point $I=(b,d)\in \R^{2+}$ where neither coordinate is $\infty$, we define the \define{diagonal image of $I$} as $\Delta(I)=(\frac{b+d}{2},\frac{b+d}{2})$.
Associated to any partial bijection $\sigma:\calB\to\calB'$ is an actual bijection $\tilde{\sigma}:\calB(\sigma)\to \calB'(\sigma)$ where
\[
 \calB(\sigma):=\dom(\sigma) \, \cup \, \dom^c(\sigma) \, \cup \bigcup_{(I',j')\in\im^c(\sigma)} (\Delta(I');j')
\]
and
\[
\calB'(\sigma):=\im(\sigma)\, \cup\, \im^c(\sigma)\, \cup \bigcup_{(I,j)\in\dom^c(\sigma)} (\Delta(I);j). 
\]
The map $\tilde{\sigma}$ now matches points that were previously unmatched by $\sigma$ with their corresponding diagonal images.
For ease of notation, we drop the superscript tilde on $\tilde{\sigma}$ and simply write $\sigma$ for the extended map $\calB(\sigma)\to\calB'(\sigma)$.

The \define{$(p,q)$-cost} of a matching $\sigma$, where $\sigma_j(I)$ denotes the $\R^{2+}$ coordinates of $\sigma(I;j)$, is
\[
    W_{p,q}(\sigma)=\left(\sum_{(I,j)\in\calB(\sigma)} ||I-\sigma_j(I)||_q^p\right)^{1/p}.
\]
As a reminder, the $\ell^q$ distance between two matched points $(I;j)=(b_j,d_j;j)$ and $\sigma(I,j)=(b_k,d_k;k)$ is
\[
    ||I-\sigma_j(I)||_q = \left(|b_j-b_k|^q+|d_j-d_k|^q\right)^{1/q}
\]
with the understanding that if $d_j=\infty$, then we must have $d_k=\infty$ and this distance collapses to $|b_j-b_k|$. 
The $\ell^{\infty}$ distance is $||I-\sigma_j(I)||_{\infty}=\max\{|b_j-b_k|,|d_j-d_k|\}$.

The \define{Wasserstein $(p,q)$-distance} between two diagrams $\mathcal{B}$ and $\mathcal{B}'$ is then the infimum of this matching cost over all matchings, i.e.
\[
W_{p,q}(\mathcal{B},\mathcal{B}'):=\inf_{\sigma:\mathcal{B}\to\mathcal{B}'} W_{p,q}(\sigma).
\]
As noted in \cite{Alg-Wasserstein}, for fixed $p$, the Wasserstein $(p,q)$-distances are all bi-Lipschitz equivalent. The convention set by \cite{Wasserstein} was to refer to the Wasserstein $(p,\infty)$-distance as \define{the Wasserstein $p$-distance $W_p$}, but there are good reasons to adopt $q=p$ or $q=1$ as conventions as well. Unless explicitly stated to the contrary, our default assumption is that $q=\infty$.

We note that the Wasserstein $\infty$-distance is also called the \define{bottleneck distance}, for which we reserve the special notation
\[
    d_B(\calB,\calB'):=W_{\infty}(\calB,\calB') = \inf_{\sigma:\calB\to\calB'} \max_{(I,j)\in\calB(\sigma)}||I-\sigma_j(I)||_{\infty}.
\]
\end{defn}

Although the bottleneck distance is the preferred distance for many theoretical purposes, in~\cite{PHT} the $1$-Wasserstein distance was used. 
For more details about the geometry of the space of persistence diagrams under $p$-Wasserstein metrics see~\cite{Median}.

\begin{defn}
The \define{Persistent Homology Transform} $\PHT$ of a constructible set $M\in\CS(\R^d)$ is the map $\PHT(M): S^{d-1} \to \text{Dgm}^d$ that sends a direction $v\in S^{d-1}$ to the persistent diagrams gotten by filtering $M$ in the direction of $v$, recording one diagram for each homological degree $0\leq k \leq d-1$, i.e.
\[
	\PHT(M): v \mapsto (PH_0(M,h_v),PH_1(M,h_v), \ldots, PH_{d-1}(M,h_v)).
\]
Letting the set $M$ vary gives us the map
\[
	\PHT : \CS(\R^d) \to C(S^{d-1},\text{Dgm}^d)
\]
where $C(S^{d-1},\text{Dgm}^d)$ is the set of continuous functions from $S^{d-1}$ to $\text{Dgm}^d$, the latter being equipped with some Wasserstein $p$-distance. 
\end{defn}

Before moving on with the remainder of the paper, we offer a sheaf-theoretic interpretation of the Persistent Homology Transform, which is not necessary for the remainder of the paper.
The reader that is uninterested in sheaves can safely ignore the following remark.

\begin{rmk-defn}[Sheaf-Theoretic Definition of the PHT]\label{rmk:sheaf-PHT}
Extending Lemma~\ref{lem:constructible-ECT}, we know that associated to any constructible set $M$ is a space
\[
X_M:=\{(x,v,t)\in M\times S^{d-1} \times \R \mid x\cdot v\leq t\}
\]
and a map 
$\pi:X_M \to  S^{d-1} \times \R$ whose fiber over $(v,t)$ is the sublevel set $M_{v,t}$.
The \define{derived Persistent Homology Transform} is the right derived pushforward of the constant sheaf on $X_M$ along the map $\pi$, written $R\pi_* k_{X_M}$.
The associated cohomology sheaves $R^i\pi_*k_{X_M}$ of this derived pushforward, called the Leray sheaves in~\cite{tda-cosheaves}, has stalk value at $(v,t)$ the $i^{th}$ cohomology of the sub-level set $M_{v,t}$.
If we restrict the sheaf $R^i\pi_*k_{X_M}$ to the subspace $\{v\}\times \R$, then one obtains a constructible sheaf that is equivalent to the persistent (co)homology of the filtration of $M$ viewed in the direction of $v$.
The persistence diagram in degree $i$ is simply the expression of this restricted sheaf in terms of a direct sum of indecomposable sheaves.
\end{rmk-defn}

\subsection{Continuity of the PHT}

It should be noted that when $M$ is an embedded simplicial complex, continuity of the resulting map $\PHT(M): S^{d-1} \to \text{Dgm}^d$ was proved as Lemma 2.1 of~\cite{PHT}.
To ensure that the above definition generalizes to constructible sets, we must prove a generalization of that lemma.

\begin{lem}[cf. Lemma 2.1~\cite{PHT}]\label{lem:PHT-cts}
For a constructible set $M\in \CS(\R^d)$, the map $\PHT(M):S^{d-1} \to \text{Dgm}^d$ is continuous, where $S^{d-1}$ is given the Euclidean distance and $\text{Dgm}$ uses any Wasserstein $p$-distance
with $1 \leq p \leq \infty$.
\end{lem}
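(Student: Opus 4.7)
The plan is to reduce continuity in the Wasserstein $p$-metric to the classical bottleneck stability theorem, combined with a uniform cardinality bound on the persistence diagrams that comes from the o-minimal structure of $M$.

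First I would control the sup-norm distance between the height functions. Since $M$ is compact, set $R := \max_{x\in M}\|x\|$; for any $v,v'\in S^{d-1}$ and any $x\in M$, Cauchy-Schwarz gives $|h_v(x)-h_{v'}(x)| = |\langle v-v',x\rangle| \le R\|v-v'\|$, so that $\|h_v-h_{v'}\|_\infty \le R\|v-v'\|$. The Cohen-Steiner--Edelsbrunner--Harer bottleneck stability theorem, applied to these tame continuous functions on the compact set $M$, then yields in every degree $k$
\[
d_b\bigl(PH_k(M,h_v),\,PH_k(M,h_{v'})\bigr)\le R\|v-v'\|.
\]

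To upgrade from bottleneck to Wasserstein continuity, I would establish a uniform bound $N=N(M)$ on the number of off-diagonal points in $PH_k(M,h_v)$, valid for every $v\in S^{d-1}$ and every $k$. The strategy is to apply Hardt-type triviality --- the parametrized strengthening of the Trivialization Theorem invoked in Remark~\ref{rmk:omin-pers} --- to the definable family
\[
\pi:\{(x,v,t)\in M\times S^{d-1}\times\R\mid x\cdot v\le t\}\longrightarrow S^{d-1}\times\R.
\]
This produces a finite definable partition of $S^{d-1}\times\R$ over which the topological type of the fiber $M_{v,t}$ is constant, so the number of topological transitions along any vertical line $\{v\}\times\R$, and hence the cardinality of each persistence diagram, is bounded uniformly in $v$. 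Compactness of $M$ also confines all finite birth and death values to a bounded interval, so the total persistence is uniformly bounded as well.

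Given such an $N$, any bottleneck $\delta$-matching between two diagrams involves at most $2N$ points paired at distance $\le \delta$ (counting pairings with the diagonal), so
\[
W_p\bigl(PH_k(M,h_v),\,PH_k(M,h_{v'})\bigr) \le (2N)^{1/p}\,d_b \le (2N)^{1/p} R\,\|v-v'\|,
\]
giving in fact Lipschitz continuity in every Wasserstein $p$-metric. The main obstacle lies in the uniform cardinality bound: the Trivialization Theorem is ordinarily stated for an individual definable map, and the content of this step is to apply it to the full family parametrized by $S^{d-1}$ rather than pointwise at each $v$, so as to obtain a bound that is independent of the direction.
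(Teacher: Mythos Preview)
Your proposal is correct and follows essentially the same approach as the paper: both obtain the sup-norm bound $\|h_v-h_{v'}\|_\infty\le R\|v-v'\|$ from compactness and Cauchy--Schwarz, invoke bottleneck stability, and then upgrade to Wasserstein $p$ via a uniform bound $N$ on the number of critical values coming from the definability of $\pi:X_M\to S^{d-1}\times\R$. The only cosmetic differences are that the paper obtains $N$ by appealing to the stratification theorem for continuous definable maps (Theorem~\ref{thm:cts-definable-strat}) rather than Hardt triviality, and it cites the Cohen-Steiner--Edelsbrunner--Harer--Mileyko Wasserstein stability theorem instead of writing out the explicit inequality $W_p\le (2N)^{1/p}d_b$.
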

\begin{proof}
First we note that since $M$ is compact, there is a bound $D_M$ on the distance from any point in $M$ to the origin.
This implies that for any two directions $v_1$ and $v_2$, the height functions $h_{v_1}$ and $h_{v_2}$ have a point-wise bound given by
\[
	|h_{v_1}(x)-h_{v_2}(x)|=|x\cdot v_1 - x\cdot v_2|\leq ||x||_2 \cdot ||v_1-v_2||_2\leq D_M||v_1-v_2||_2
\]
Consequently, we have that $||h_{v_1}-h_{v_2}||_{\infty} \leq D_M||v_1-v_2||_2$.

The bottleneck stability theorem of~\cite{Bottleneck} guarantees that for each homological degree $k\geq 0$ the bottleneck distance between the persistence diagrams is bounded by the $L^{\infty}$ distance between the functions. 
It suffices to prove continuity of $\PHT(M)$ in each coordinate, so without loss of generality, we refer to the $k^{th}$ persistence diagram for $h_{v_1}$ as $\mathcal{B}_1$ and the $k^{th}$ persistence diagram for $h_{v_2}$ as $\mathcal{B}_2$.
The bottleneck stability theorem guarantees that
\[
	d_B(\mathcal{B}_1, \mathcal{B}_2) \leq ||h_{v_1}-h_{v_2}||_{\infty} \leq D_M ||v_1-v_2||_2.
\]
This implies continuity of $\PHT(M)$ when using the bottleneck distance on persistence diagrams, which is the Wasserstein $\infty$-distance, because in order to make the left hand side less than $\epsilon$, we need only bound the Euclidean distance $||v_1-v_2||_2$ by $\delta=\epsilon/D_M$. 


For general $p$ we use Lemma~\ref{lem:constructible-ECT}, which showed that for a constructible set $M$ there is a bound $\kappa_M$ on the number of critical values when viewed in any direction $v$.
As a consequence of this lemma, the number of points in the persistence diagrams $\mathcal{B}_1$ and $\mathcal{B}_2$ are both bounded above by $\kappa_M$, since each point in the diagram corresponds to a topological change when filtering $M$ in the direction $v_i$. 
Let $\sigma:\mathcal{B}_1\to \mathcal{B}_2$ be a matching that realizes the bottleneck distance $\epsilon:=d_B(\mathcal{B}_1,\mathcal{B}_2)$, so that in particular $||I-\sigma_j(I)||_{\infty}\leq \epsilon$ for all $(I,j)\in \calB_1(\sigma)$.
We note that it's possible that $\sigma$ matches every point in $\calB_1$ to the diagonal, which would lead to an augmented diagram $\calB_1(\sigma)$ of cardinality at most $2\kappa_M$.
Since the Wasserstein $p$-distance infimizes the $(p,\infty)$-cost over all matchings, we can now say that
\[
W_p(\mathcal{B}_1,\mathcal{B}_2)\leq \left(\sum_{(I,j)\in\mathcal{B}_1(\sigma)} ||I-\sigma_j(I)||_{\infty}^p\right)^{1/p} \leq \left( 2\kappa_M \epsilon^p\right)^{1/p} = (2\kappa_M)^{1/p}\epsilon.
\]
Since we can control $\epsilon$ in terms of $||v_1-v_2||_2$, this proves continuity of the PHT for the Wasserstein $p$-distance for $p\in[1,\infty]$.
\end{proof}



\subsection{Continuity of Betti Curves and Euler Curves}

The goal of this section will be the statement and proof of a persistent analog of the classical result that homology determines the Euler characteristic via Betti numbers.
This persistent analog will also feature the extension of the continuity result of Lemma \ref{lem:PHT-cts} to the Betti Curve Transform (Definition \ref{defn:BCT}), which will in turn imply continuity of the Euler Characteristic Transform.
Each of these results will require the specification of a metric on constructible functions over the real line, i.e. $\CF(\R)$, which we now do.

\begin{defn}\label{defn:CF-Lp}
For every $1\leq p < \infty$ we define the $L^p$ extended pseudo-metric (distance) on constructible functions as follows: For $f,g\in \CF(\R)$ we set 
\[
	d_p(f,g) = \left( \int |f(t) -g(t)|^p dt \right)^{1/p}
\]
when this integral exists and $\infty$ when it does not.
Similarly, we define the $L^{\infty}$ distance to be $d_{\infty}(f,g):=\sup_{t\in \R} |f(t)-g(t)|$ when the right hand side exists and $\infty$ when it does not.
\end{defn}

We use this distance to prove the following preparatory lemma, which details the passage from persistence diagrams to constructible functions.

\begin{lem}[Continuity of the Diagram to Function Map]\label{lem:dgm-to-CF}
Recall that every persistence diagram $\mathcal{B}=\{(I,j)\}$ can be viewed as a multiset of intervals in $\R$.
Let $\Phi(\mathcal{B})$ be the constructible function associated to $\calB$ that sums the indicator functions supported on each interval $I$ appearing in $\calB$, i.e.~$\Phi(\mathcal{B}) = \sum_{(I,j)\in \mathcal{B}} \mathbbm{1}_I$.
Let $\Dgm_{\kappa}$ denote the subset of persistence diagram space with fewer than $\kappa$ off-diagonal points.

For every $p\in[1,\infty)$, the map $\Phi:\Dgm_{\kappa}\to\CF(\R)$ is continuous when $\Dgm_{\kappa}$ is equipped with the Wasserstein $q$-distance and $\CF(\R)$ is equipped with the $L^p$ distance of Definition \ref{defn:CF-Lp}.
Additionally, this distance can be bound in terms of the bottleneck distance as follows
\[
d_p(\Phi(\calB)),\Phi(\calB'))^p \leq 2 \kappa^p d_B(\calB,\calB')
\]
\end{lem}
\begin{proof}
Since $W_q(\calB, \calB')\geq d_B(\calB,\calB')$ for all $q\in [1,\infty)$ to show that that $\Phi$ is continuous when $\Dgm_{\kappa}$ is equipped with the Wasserstein $q$-distance it is sufficient to show it is continuous with respect to the bottleneck distance.  

We note that we must exclude $p=\infty$ from the statement of the lemma, because a persistence diagram $\mathcal{B}$ is distance zero from the same diagram where a point is added on the diagonal, e.g. $\mathcal{B}\cup (b,b)$. In this case, the constructible functions $\Phi(\mathcal{B})$ and $\Phi(\mathcal{B}\cup (b,b))=\Phi(\mathcal{B})+\mathbbm{1}_{b}$ are distance one away in the $L^{\infty}$ distance, thus proving that continuity of this map for $p=\infty$ is not possible.

For $p\ne \infty$, we remark that the map $\Phi$ is intuitively continuous because a small variation in points in the diagram $\mathcal{B}$ results in a small variation in the endpoints of the simple functions making up $\Phi(\mathcal{B})$. 
By construction we know that
\[
d_p(\Phi(\calB),\Phi(\calB'))^p=\int | \sum_{(I,j)} \mathbbm{1}_{I} - \sum_{(I',j')} \mathbbm{1}_{I'}|^p
\]
To quantify this small variation precisely, let $\sigma:\calB\to\calB'$ be a matching that realizes the $(1,1)$-Wasserstein distance $W_1(\calB,\calB')$, where we work with the $\ell^1$ distance between points in the persistence diagram. 
We note that since we assume the number of points in $\calB$ and $\calB'$ are both less than $\kappa$, then this infimum must be realized. 
Without loss of generality, we can assume that two off-diagonal points will be matched to each other whenever matching to their diagonal images has the same, or greater, cost.

Consider the function $f=\sum_{(I,j)} \mathbbm{1}_I - \sum_{(I',j')} \mathbbm{1}_{I'}$. To bound $\int |f|^p$ we will use bounds on $\int |f|$ and $f$. Let $I \Delta \sigma_j(I)$ denote the symmetric difference between intervals $I$ and $\sigma_j(I)$. 
As already observed in \cite[Prop. 1.2]{Bubenik-Scott-Stanley}, we can bound the integral of $|f|$ by 
\[
\int |f|\leq\int |\sum_{(I,j)} \mathbbm{1}_{I\Delta \sigma_j(I)}|= \sum_{(I,j)} \int|\mathbbm{1}_{I\Delta \sigma_j(I)}|
= W_1(\calB, \calB').
\]

We also claim that $0\leq |f(x)|\leq \kappa$ for all $x$. 
To see this, first observe that there are at most $\kappa$ points in each of $\calB$ and $\calB'$. 
If $f(x)>\kappa$ then there must be intervals $I\in \calB$ and $I'\in \calB'$ such that $x\in I\cap I'$ and both $I$ and $I'$ are matched to the diagonal under $\sigma$. 
However, the cost of matching both $I$ and $I'$ to the diagonal is $\mu(I)+\mu(I')$, where $\mu$ denotes the length of each interval, whereas the cost of matching $I$ to $I'$ is $\mu(I\Delta I')\leq \mu(I)+\mu(I')$ as $I$ and $I$ intersect.
This contradicts our assumption that $\sigma$ was an optimal matching for $W_{1,1}$.

Using the bound $0 \leq f(x) \leq \kappa$, the integral $\int |f|^p$ is bounded above by the integral $\int |g|^p$ where $\int |g|=\int |f|$ and $g$ only takes on the values of $\kappa$ or zero. That is 
\[
\int |f|^p \leq \kappa^p\frac{\int|f|}{\kappa}=\kappa^{p-1}\int|f|
\]
and hence \[d_p(\Phi(\calB),\Phi(\calB'))^p\leq \kappa^{p-1} W_1(\calB, \calB').\]

To finish the proof we only need to observe that if $\calB, \calB'\in \Dgm_{\kappa}$ then both $W_1(\calB, \calB')\leq 2\kappa W_q(\calB, \calB')$ (for $q\in [1,\infty)$)and $W_1(\calB, \calB')\leq 2\kappa d_B(\calB, \calB')$.  

\end{proof}

Recall that for fixed $M\in\CS(\R^d)$ and $v\in S^{d-1}$ the Betti number in degree $k$ of the sublevel set $M_{v,t}$ is
\[
\beta_k(M_{v,t})=\dim H_k(M_{v,t}).
\]
For each $t\in\R$, the right hand side of the above equation is determined by the cardinality of the intersection of the persistence diagram $PH_k(M,h_v)$ with the half-open quadrant $[-\infty,t]\times(t,\infty]$.
However, by allowing $t$ to vary, one obtains the \define{Betti curve} $\beta_{k,v}(t)$ for the direction $v$ in degree $k$ as the sum of indicator functions supported on the intervals appearing in the persistence diagram for $PH_k(M,h_v)$, i.e. $\Phi(PH_k(M,h_v))$ where $\Phi$ was defined in Lemma \ref{lem:dgm-to-CF}.
These observations provide a third topological transform that stands between the PHT and the ECT.

\begin{defn}[Betti Curve Transform]\label{defn:BCT}
Let $M$ be a constructible set and let $\PHT(M):S^{d-1}\to\Dgm^d$ be the associated persistent homology transform.
Applying the map $\Phi:\Dgm\to\CF(\R)$ in each coordinate yields the \define{Betti Curve Transform} (BCT)
\[
	\BCT(M): v \mapsto (\beta_{0,v}(t),\beta_{1,v}(t),\ldots,\beta_{d-1,v}(t)).
\]
\end{defn}

We now prove that the Betti Curve Transform is continuous as a function of the viewing direction $v\in S^{d-1}$. 
Although this is an immediate corollary of Lemma \ref{lem:dgm-to-CF}, we will provide a more concrete ``for all $\epsilon$,
there exists a $\delta$'' proof using the Bottleneck distance bound proved earlier.

\begin{cor}[Continuity of Betti Curves]\label{cor:betti-curve-stability}
Fix a constructible set $M$. 
For each $p\in[1,\infty)$, the Betti Curve Transform $\BCT(M):S^{d-1}\to \CF(\R)^d$ is continuous, where each coordinate $\BCT(M)_k$ is viewed as a map from the sphere to $\CF(\R)$, the latter using the $L^p$ distance on constructible functions.
\end{cor}
\begin{proof}
It suffices to prove continuity in each coordinate, which puts us in the setting of Lemma \ref{lem:dgm-to-CF}.
As was proved in Lemma~\ref{lem:constructible-ECT}, we can bound the number of off-diagonal points in any persistence diagram $PH_k(M,h_v)$ by $\kappa_M$.
This serves as the value $\kappa$ required by Lemma \ref{lem:dgm-to-CF}.
Since Lemma \ref{lem:PHT-cts} proved that the diagrams are continuous as a function of sphere direction, Lemma \ref{lem:dgm-to-CF} proves that the resulting constructible functions also vary continuously. This completes the proof.

To see a more detailed special case of this argument, we note that bottleneck stability proves that for any pair of directions $v_1$ and $v_2$ the bottleneck distance between the associated diagrams is bounded as $d_B(\calB_1,\calB_2)\leq D_M||v_1-v_2||_2$.
Lemma \ref{lem:dgm-to-CF} proves that the $L^p$ distance between the associated Betti curves is bounded as $d_p(\Phi(\calB_1),\Phi(\calB_2))\leq (2^{p+1}\kappa_M^p d_B(\calB_1,\calB_2))^{1/p}$.
Consequently to make $d_p(\Phi(\calB_1),\Phi(\calB_2))\leq \epsilon$ it suffices to make $||v_1-v_2||\leq \delta$ where $\delta=\frac{\epsilon^p}{2\kappa_M^p D_M}$.

\end{proof}


\begin{prop}\label{prop:PHT-ECT}
The Persistent Homology Transform ($\PHT$) determines the Euler Characteristic Transform ($\ECT$), i.e. we have the following commutative diagram of maps
\[
	\xymatrix{ & C(S^{d-1},\text{Dgm}^d) \ar[d]^{\alpha \circ \beta \circ \mbox{\textendash} } \\ \CS(\R^d) \ar[ur]^{\PHT} \ar[r]_-{\ECT} & C(S^{d-1},\CF( \R))}
\]
where $\beta:=\Phi^d:\Dgm^d\to\CF(\R)^d$ is the map that takes the $\PHT$ to the Betti Curve Transform $\BCT$.
The map $\alpha:\CF(\R)^d\to \CF(\R)$ takes the alternating sum of $d$ constructible functions pointwise.
Finally, we recall that the set $C(S^{d-1},\CF(\R))$ refers to the set of continuous maps from $S^{d-1}$, equipped with the restriction of the Euclidean norm, to $\CF(\R)$, equipped with any $L^p$ norm so long as $p\in [1,\infty)$; again the case $p=\infty$ is intentionally excluded, since continuity there is impossible.
\end{prop}
\begin{proof}

Corollary ~\ref{cor:betti-curve-stability} proves that each Betti curve is continuous as a function of $v\in S^{d-1}$.
This proof used, Lemma \ref{lem:dgm-to-CF}, which proved that the map from $\Dgm\to \CF(\R)$ that takes a persistence diagram to the sum of indicator functions supported on each interval is continuous when the domain is equipped with a Wasserstein $p$-norm and $\CF(\R)$ uses the $L^p$ norm.

The alternating sum of $d$ constructible functions, i.e. $\alpha:\CF(\R)^d\to \CF(\R)$, is continuous and so
\[
	\alpha\circ\beta\circ \PHT(M)(v,t)= \sum_{k=0}^n (-1)^k \beta_k(M_{v,t})=\chi(M_{v,t}),
\]
which is $\ECT(M)(v,t)$, is continuous as a function of $v$ for any $L^p$ norm so long as $p\ne \infty$.
\end{proof}

When our constructible set $M$ is a PL embedded geometric complex $K$, then we can say more about the continuity and image of the Euler Characteristic Transform, as the next remark indicates.

\begin{rmk-thm}\label{rmk:Hausdorff-ECs}
When $M$ is a PL embedded simplicial complex $K$, we will show in Proposition~\ref{prop:lower-star} that, for any direction $v$, the Euler curve is right continuous.
More precisely, the function space that $\ECT$ maps to consists of finite sums of indicator functions on half open intervals of the form $[a,b)$ where $b=\infty$ is allowed.
If we endow the space of indicator functions with an $L^p$ norm for finite $p$, then we get that if two Euler curves $f$ and $g$ differ at a filtration parameter $t$, then there is some $\epsilon >0$ so that $(f-g)|_{[t,t+\epsilon)}\neq 0$, which in turn implies that $d_p(f,g) > \epsilon$.
This implies that in this setting, the Euler Characteristic Transform lands in a Hausdorff subspace of constructible functions $\CF(\R)$ equipped with some $L^p$ norm.
\end{rmk-thm}

Finally, we remind the reader of the sheaf-theoretic and $K_0$-theoretic perspective on the Euler Characteristc Transform.

\begin{rmk-prop}[Grothendieck Group Interpretation]\label{rmk:sheaf-PHT-ECT}
Continuing Remark~\ref{rmk:sheaf-PHT}, the reader familiar with the Grothendieck group of constructible sheaves (see~\cite{Schapira:OpsOnCFs} for a clear and concise treatment) will note that Proposition~\ref{prop:PHT-ECT} coheres with the statement that the image of the (derived) Persistent Homology Transform in $K_0$ is the Euler Characteristic Transform.
\end{rmk-prop}

\subsection{Injectivity of the PHT and BCT}

We can combine Proposition~\ref{prop:PHT-ECT} with Theorem~\ref{thm:ECT-injects} to obtain a generalization of an injectivity result proved in~\cite{PHT} for simplicial complexes in $\R^2$ and $\R^3$.

\begin{thm}\label{thm:PHT-injects}
Let $\CS(\R^d)$ be the set of constructible sets, i.e.~compact definable subsets of $\R^d$. The Persistent Homology Transform $\PHT: \CS(\R^d) \to C(S^{d-1}, \text{Dgm}^d)$ and Betti Curve Transform $\BCT:\CS(\R^d) \to C(S^{d-1}, \CF(\R)^d)$ are both injective.
\end{thm}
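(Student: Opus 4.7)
The plan is to deduce this theorem essentially as a corollary of the two results we already have in hand, namely Theorem~\ref{thm:ECT-injects} (injectivity of the ECT on constructible sets) and Proposition~\ref{prop:PHT-ECT} (that PHT factors through the ECT via the Betti-curve map $\beta$ and the alternating-sum map $\alpha$). The central observation is that injectivity passes backwards along any factorization: if a composition $g\circ f$ is injective, then $f$ must be injective. So the real content has been front-loaded into the ECT injectivity theorem, and what remains is just to assemble the diagram.

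Concretely, I would argue as follows. Suppose $M, M' \in \CS(\R^d)$ satisfy $\PHT(M) = \PHT(M')$ as elements of $C^0(S^{d-1},\Dgm^d)$. Applying the continuous map $\alpha\circ\beta : C^0(S^{d-1},\Dgm^d) \to \CF(S^{d-1}\times\R)$ to both sides and invoking the commutativity of the diagram in Proposition~\ref{prop:PHT-ECT}, we get
\[
\ECT(M) \;=\; \alpha\circ\beta\circ\PHT(M) \;=\; \alpha\circ\beta\circ\PHT(M') \;=\; \ECT(M').
\]
Theorem~\ref{thm:ECT-injects} then forces $M = M'$, which is exactly the injectivity claim.

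There is no real obstacle to overcome; the two ingredients have been arranged so that this reduces to a one-line chase. The only thing worth pausing on is to make sure the reader sees why the factorization is genuinely well-defined at the level of sets (rather than only up to isomorphism of persistence modules): the Betti-curve map $\beta$ reads off multiplicities of diagram points as an integer-valued function, and the alternating-sum map $\alpha$ does not lose any further information because Euler characteristic of a constructible set equals the alternating sum of its Betti numbers degree-by-degree and height-by-height. Once this is noted, the proof is complete.
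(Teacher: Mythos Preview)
Your proof is correct and matches the paper's own argument essentially line for line: assume $\PHT(M)=\PHT(M')$, deduce $\ECT(M)=\ECT(M')$ via Proposition~\ref{prop:PHT-ECT}, and conclude $M=M'$ by Theorem~\ref{thm:ECT-injects}. The paper states this in a single sentence, but the content is identical.
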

\begin{proof}
If two constructible sets $M$ and $M'$ have $\PHT(M)=\PHT(M')$ or $\BCT(M)=\BCT(M')$, then they have $\ECT(M)=\ECT(M')$, which by Theorem \ref{thm:ECT-injects} implies that $M=M'$.
\end{proof}

\begin{rmk-thm}[Co-Discovery]
In the final stages of preparing the first version of this article, a pre-print~\cite{RG-ECT-inject} by Rob Ghrist, Rachel Levanger, and Huy Mai appeared independently proving Theorem~\ref{thm:PHT-injects}. 
The authors of that paper and this paper want to make clear that these results were independently discovered.
\end{rmk-thm}

\section{Stratified Space Structure of the ECT and PHT}\label{sec:stratified}


Recall that the content of Lemma \ref{lem:constructible-ECT} was two-fold: (1) that for a constructible set $M$ we could partition the space of directions and filtration values $S^{d-1}\times \R$ into finitely many regions over which the topological type of the sublevel sets $M_{v,t}$ do not change, and (2) that we could use this partition to obtain a partition of the sphere of directions so that when two directions $v$ and $w$ are in the same connected component of this partition, the persistent homology induced by $h_v$ and $h_w$ are equivalent in a certain sense.
In this section we introduce the language of stratification theory to give a slightly tighter description of these decompositions in order to make issues of dimensionality and differentiability more transparent.

In particular, by using stratification theory we can refine the partitions of Lemma \ref{lem:constructible-ECT} into connected manifolds of varying dimension called \emph{strata}.
This will allow us to make certain arguments in terms of top-dimensional strata on the sphere $S^{d-1}$.
When we eventually specialize our discussion to those $M$ that are PL embedded geometric complexes in $\R^d$, we prove that the relationship between the persistent topology for $h_v$ and $h_w$, when $v$ and $w$ are in the same stratum, is essentially linear.
These observations will be critical in concluding that only finitely many directions are needed to infer a shape.

We now define what we mean by a stratified space structure.
We note that there are many notions of a stratification, perhaps the most famous being the one due to Whitney~\cite{Whitney}.
Our notion of a stratification and of a stratified map will be a combination of Whitney's condition with definability requirements.
We follow the presentation in~\cite{o-min-strat} in order to use the results proved there.

\begin{defn}
Suppose $S_{\alpha}$ and $S_{\beta}$ are a pair of $C^1$ submanifolds of $\R^n$ with 
$S_{\alpha} \subseteq \overline{S_{\beta}}$.
We let $T_x S_{\alpha}$ refer to the tangent space to $S_{\alpha}$ at $x$.
The pair $S_{\alpha}$ and $S_{\beta}$ satisfy \define{Whitney's condition $(b)$ at $x$} if
\begin{quote}
	for every sequence $\{x_n\}$ in $S_{\alpha}$ converging to $x$ and sequence $\{y_m\}$ in $S_{\beta}$ also converging to $x$ where the tangent spaces $T_{y_k}S_{\beta}$ converge to $\tau$ and the secant lines $\ell_k:=\langle x_k-y_k \rangle$ converge to $\ell$ then $\ell\subset \tau$.
\end{quote}
\end{defn}

This condition can be made to cohere with sets that are definable in some o-minimal structure.

\begin{defn}
A \define{definable $C^p$ stratification of $\R^n$} is a partition $\mathcal{S}$ of $\R^n$ into finitely many subsets, called strata, such that:
\begin{enumerate}
	\item Each stratum is a connected and definable $C^p$ submanifold of $\R^n$.
	\item The Axiom of the Frontier holds, i.e.~if $S_{\alpha}\cap \overline{S_{\beta}}\neq \varnothing$, then $S_{\alpha}\subseteq \overline{S_{\beta}}$.
\end{enumerate}
We note that \cite[Proposition 2.2.20]{IH-Book} proves that the Axiom of the Frontier implies that the set of strata forms a partially ordered set (where $S_{\alpha}\leq S_{\beta}$ if and only if $S_{\alpha}\subseteq \overline{S_{\beta}}$)
and that the frontier of any stratum, i.e. $\overline{S_{\gamma}}\setminus S_{\gamma}$, is a union of strata of strictly lower dimension. 

A \define{definable $C^p$ Whitney stratification} is a definable $C^p$ stratification $\mathcal{S}$ such that for all $S_\alpha\leq S_{\beta}$ in $\mathcal{S}$ the pair satisfies Whitney's condition $(b)$ at all points $x\in S_{\alpha}$.
\end{defn}

We will be making use of two theorems, which say that stratifications and stratified maps can be made to respect pre-existing definable subsets.
To be precise, we require the following definition.

\begin{defn}
We say that $\mathcal{S}$ is \define{compatible} with a class $\mathcal{A}$ of subsets of $\R^n$ if each $A\in \mathcal{A}$ is a finite union of some strata in $\mathcal{S}$.
\end{defn}

\begin{thm}[cf. Thm. 1.3 of~\cite{o-min-strat}]\label{thm:compatible-strat}
If $A_1,\cdots, A_k$ are definable sets in $\R^n$, then there exists a definable $C^p$ Whitney stratification of $\R^n$ compatible with $\{ A_1,\cdots, A_k\}$.
\end{thm}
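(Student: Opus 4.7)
My plan is to follow the standard o-minimal cell decomposition approach, refining iteratively until Whitney's condition (b) is achieved. The main tools are the cell decomposition theorem (available in any o-minimal structure) and a dimension-drop argument on the locus where Whitney (b) fails.

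First, I would invoke the $C^p$ cell decomposition theorem for o-minimal structures (see \cite{vdD}, Chapter 7): given the finite family $A_1, \ldots, A_k$, there exists a partition of $\R^n$ into finitely many definable $C^p$ cells $\{C_i\}$ such that each $A_j$ is a union of cells. Every cell is by construction a connected definable $C^p$ submanifold of $\R^n$, so condition (1) of a definable $C^p$ stratification is immediate. The frontier condition (2), namely that $\overline{C_i}\setminus C_i$ is a union of cells in the partition, follows from the cell decomposition theorem as well (one may need to pass to a refinement that is ``compatible with the closures'' $\overline{C_i}$, which is obtained by applying the cell decomposition once more to the finite family $\{A_j\}\cup\{\overline{C_i}\}$).

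Next I would upgrade this stratification to a Whitney stratification. Here the key ingredient is the following lemma (available in the o-minimal setting, e.g.\ Loi \cite{o-min-strat}): for any pair $S_\alpha \leq S_\beta$ of definable $C^p$ submanifolds with $S_\alpha \subseteq \overline{S_\beta}\setminus S_\beta$, the set
\[
W(S_\alpha,S_\beta) := \{x\in S_\alpha \mid \text{Whitney (b) fails for the pair at } x\}
\]
is a definable subset of $S_\alpha$ of strictly smaller dimension than $S_\alpha$. This lemma rests on the definability of the tangent map $x\mapsto T_xS_\beta$ as a map into the definable Grassmannian, together with the o-minimal curve selection lemma to detect failures of (b) along definable arcs rather than arbitrary sequences.

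Given this lemma, I would conclude by an inductive refinement. Enumerate all pairs $(S_\alpha, S_\beta)$ with $S_\alpha$ in the frontier of $S_\beta$, form the finite union $W:= \bigcup_{(\alpha,\beta)} W(S_\alpha,S_\beta)$, and re-apply the cell decomposition theorem to the enlarged finite family $\{A_j\}\cup\{C_i\}\cup\{W\}\cup\{\overline{C_i}\}$. The resulting refinement places all failure points of Whitney (b) into strata of strictly lower dimension than the stratum they used to sit in, so after finitely many iterations (bounded by $n+1$, since dimension strictly decreases each pass) the procedure terminates in a definable $C^p$ Whitney stratification compatible with the original family.

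The main obstacle is the dimension-drop lemma for $W(S_\alpha,S_\beta)$: verifying that failure of Whitney (b) is itself a definable phenomenon and that the failure set is nowhere dense in $S_\alpha$. The definability uses that limits of tangent planes and of secant directions can be encoded in the definable Grassmannian bundle, and the dimension drop uses o-minimal curve selection to reduce to a one-parameter statement, where one shows directly that along a generic definable curve the secant-to-tangent inclusion holds. Once this lemma is in hand, the rest is bookkeeping on top of the cell decomposition theorem.
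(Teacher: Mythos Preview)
The paper does not prove this theorem; it is quoted verbatim as Theorem~1.3 of Loi~\cite{o-min-strat} and used as a black box, so there is no in-paper argument to compare against. Your sketch is essentially the standard proof one finds in that reference and in the broader o-minimal literature: start from a $C^p$ cell decomposition compatible with the given family, then iteratively refine along the definable, lower-dimensional locus where Whitney~(b) fails.

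One point worth tightening: the refinement loop is cleanest when organized as a \emph{downward induction on the dimension of the strata} rather than as an unstructured ``repeat until done.'' After fixing the top-dimensional cells, one observes that for any pair $S_\alpha\leq S_\beta$ the bad set $W(S_\alpha,S_\beta)$ has $\dim W(S_\alpha,S_\beta)<\dim S_\alpha$; hence refining to make $W(S_\alpha,S_\beta)$ a union of strata only alters strata of dimension strictly below $\dim S_\alpha$, and the already-processed higher-dimensional strata remain untouched. Your remark that the process terminates in at most $n+1$ passes is correct in spirit, but phrasing it this way makes clear why the newly created pairs do not reintroduce failures at dimensions already handled. The same downward-induction bookkeeping is also what guarantees the frontier condition after the refinement, which your step~(2) glosses over slightly: a single pass of ``add the closures and re-decompose'' does not in general yield the frontier condition, but the dimension-by-dimension scheme does.
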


\begin{defn}
Let $f: X \to Y$ be a definable map. 
A \define{$C^p$ stratification of $f$} is a pair $(\mathcal{X},\mathcal{Y})$, where $\mathcal{X}$ and $\mathcal{Y}$ are definable $C^p$ Whitney stratifications of $X$ and $Y$ respectively.
Moreover we require that for each $X_{\alpha} \in \mathcal{X}$ there be a $Y_{\alpha}\in \mathcal{Y}$, such that $f(X_{\alpha})\subset Y_{\alpha}$ and $f|_{X_{\alpha}} : X_{\alpha} \to Y_{\alpha}$ is a $C^p$ submersion.
\end{defn}

\begin{thm}[cf. Thm.  2.2 of~\cite{o-min-strat}]\label{thm:cts-definable-strat}
Let $f:X \to Y$ be a continuous definable map.
If $\mathcal{A}$ and $\mathcal{B}$ are finite collections of definable subsets of $X$ and $Y$, respectively, then there exists a $C^p$ stratification $(\mathcal{X},\mathcal{Y})$ of $f$ such that $\mathcal{X}$ is compatible with $\mathcal{A}$ and $\mathcal{Y}$ is compatible with $\mathcal{B}$.
\end{thm}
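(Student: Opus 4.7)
The plan is to obtain $(\mathcal{X},\mathcal{Y})$ by repeated application of Theorem~\ref{thm:compatible-strat}, alternating between the source and target, with the submersion condition enforced via a rank stratification. The key observation is that Theorem~\ref{thm:compatible-strat} already supplies Whitney-stratified refinements compatible with any finite family of definable sets, so the entire argument reduces to producing the right finite families on each side.

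First I would apply Theorem~\ref{thm:compatible-strat} to $Y$ to obtain a definable $C^p$ Whitney stratification $\mathcal{Y}_0$ compatible with $\mathcal{B}$. Because $f$ is continuous and definable, the preimages $f^{-1}(Y_\beta)$ of the strata $Y_\beta\in\mathcal{Y}_0$ are definable subsets of $X$. I would then apply Theorem~\ref{thm:compatible-strat} on $X$ with the enlarged collection $\mathcal{A}\cup\{f^{-1}(Y_\beta):Y_\beta\in\mathcal{Y}_0\}$ to obtain a Whitney stratification $\mathcal{X}_0$ compatible with this family. By construction, every stratum $X_\alpha\in\mathcal{X}_0$ is contained in a single preimage $f^{-1}(Y_\beta)$, so $f(X_\alpha)\subseteq Y_\beta$.

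Next I would upgrade each restriction $f|_{X_\alpha}\colon X_\alpha\to Y_\beta$ to a $C^p$ submersion onto its image stratum. Since $X_\alpha$ is a definable $C^p$ manifold, the locus where the differential $df|_{X_\alpha}$ has rank exactly $r$ is a definable subset, so the rank is constant on each piece of a suitable definable refinement. Applying Theorem~\ref{thm:compatible-strat} once more, with these rank loci added to the family, yields a refinement $\mathcal{X}_1$ on which the rank of $df|_{X_\alpha}$ is locally constant. By the o-minimal constant rank theorem, on each such stratum the image $f(X_\alpha)$ is a definable $C^p$ submanifold of $Y$ of dimension equal to that rank. Adding the collection of these images to $\mathcal{B}$ and re-applying Theorem~\ref{thm:compatible-strat} on $Y$ produces a refinement $\mathcal{Y}_1$ in which every $f(X_\alpha)$ is a union of strata of matching dimension, making each $f|_{X_\alpha}$ a submersion onto a single stratum of $\mathcal{Y}_1$.

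The main obstacle is that refining $\mathcal{Y}_0$ to $\mathcal{Y}_1$ can break the property that each $X_\alpha$ maps into a single target stratum, so one must iterate the source/target refinement, repeatedly pulling new $Y$-strata back to $X$ and pushing new images forward to $Y$. To close the argument I would argue termination by the uniform finiteness properties of the o-minimal structure: each iteration strictly refines the previous stratifications, and the total number of strata of a definable Whitney stratification of a fixed definable set is bounded, so the process stabilizes after finitely many steps. The resulting pair $(\mathcal{X},\mathcal{Y})$ is then compatible with $\mathcal{A}$ and $\mathcal{B}$, each stratum maps into a single stratum, and each such restriction has locally constant rank equal to the dimension of its image, i.e.\ is a $C^p$ submersion, as required. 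Whitney's condition $(b)$ is preserved throughout because every refinement is itself produced by Theorem~\ref{thm:compatible-strat}.
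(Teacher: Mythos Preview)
The paper does not actually prove this theorem: it is quoted verbatim as Theorem~2.2 of \cite{o-min-strat} and used as a black box, with no accompanying argument. So there is no ``paper's own proof'' to compare your proposal against.

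That said, a brief comment on your sketch as a standalone argument. The overall shape---stratify $Y$, pull back, stratify $X$, enforce constant rank, push images forward, and iterate---is indeed the standard strategy. But two steps need repair. First, constant rank on a stratum $X_\alpha$ does \emph{not} guarantee that $f(X_\alpha)$ is a $C^p$ submanifold of $Y$: an immersion can have self-intersecting image (a definable figure-eight, say), so ``adding the images to $\mathcal{B}$'' does not by itself produce strata on which $f$ is a submersion. One typically handles this by working instead with the locus where the fiber dimension jumps, or by invoking cell decomposition rather than the bare constant rank theorem. Second, your termination argument is not valid: there is no uniform bound on the number of strata in a definable Whitney stratification of a fixed set, since one can always refine further. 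The correct termination is by induction on $\dim Y$ (or on $\dim X$): at each stage the ``bad'' locus---where the submersion condition fails or the image is not yet a union of strata---has strictly smaller dimension than before, so the recursion bottoms out. With those two fixes your outline becomes essentially the proof in \cite{o-min-strat}.
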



As observed in the proof of Lemma~\ref{lem:constructible-ECT} and Remark~\ref{rmk:omin-pers}, both the ECT and PHT can be viewed as auxiliary definable constructions associated to an o-minimal set $M$.
Specifically, the ECT is gotten by the pushforward of the indicator function along the map
\[
	\pi: X_M \to S^{d-1}\times \R
\]
and the PHT is associated to the Leray sheaves (the cohomology sheaves of the derived pushforward of the constant sheaf) of this map.
Since Theorem~\ref{thm:cts-definable-strat} assures us that this map is stratifiable, we get induced stratifications of the codomain of $\pi$ as well as the sphere $S^{d-1}$.
This is the content of the next lemma.


\begin{lem}\label{lem:strat}
For a general o-minimal set $M\in \Omin_d$, the PHT or the ECT will induce a stratification of $S^{d-1}\times \R$ as well as a stratification of $S^{d-1}$.
Moreover, in the induced stratification of the sphere $S^{d-1}$, a necessary condition for two directions $v$ and $w$ to be in the same stratum is that there is a stratum-preserving homeomorphism of the real line that induces an order preserving bijection between
\[
	\bigcup_{k=0}^{d-1} \bigcup_{(I,j)\in PH_k(M,h_v)} \{b_j^k(I)\} \cup \{d_j^k(I)\} \to  \bigcup_{k=0}^{d-1} \bigcup_{(I',j')\in PH_k(M,h_w)} \{b_{j'}^k(I')\} \cup \{d_{j'}^k(I')\}.
\]
These two sets being the union of all the birth times and death times all the points $(I,j)$ and $(I',j')$ in each of the corresponding persistence diagrams in all degrees $k$, associated to filtering by $h_v$ and $h_w$.
\end{lem}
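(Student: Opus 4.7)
The plan is to apply Theorem~\ref{thm:cts-definable-strat} in two stages. First, consider the definable map $\pi: X_M \to S^{d-1} \times \R$ from Remark~\ref{rmk:constructible-ECT}, whose fiber over $(v,t)$ is the sublevel set $M_{v,t}$. Applying Theorem~\ref{thm:cts-definable-strat} yields a $C^p$-stratification $(\mathcal{X},\mathcal{Y})$ where $\mathcal{Y}$ is a definable Whitney stratification of $S^{d-1}\times \R$ such that, for every $X_\alpha \in \mathcal{X}$, the restriction $\pi|_{X_\alpha}$ is a submersion onto some stratum of $\mathcal{Y}$. By Thom's first isotopy lemma in the o-minimal setting (equivalently, the Trivialization Theorem referenced in Remark~\ref{rmk:omin-pers}), $\pi$ is locally trivial over each stratum of $\mathcal{Y}$, so the topological type of $M_{v,t}$ is constant within each stratum. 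A second application of Theorem~\ref{thm:cts-definable-strat} to the projection $\mathrm{pr}_1: S^{d-1}\times \R \to S^{d-1}$, using $\mathcal{Y}$ as the prescribed compatible collection on the source, produces a stratification $\mathcal{Z}$ of $S^{d-1}$, establishing the first claim.

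For the second claim, suppose $v$ and $w$ lie in the same stratum $Z \in \mathcal{Z}$. I would further refine $\mathcal{Y}$ using Theorem~\ref{thm:compatible-strat} so that the restriction of $\mathrm{pr}_1$ to every stratum $Y_\alpha$ is itself a submersion onto a single stratum in $\mathcal{Z}$; the strata of $\mathcal{Y}$ lying over $Z$ then split into two types: full-dimensional ``open'' strata fibered as $Z\times I$ for open intervals $I\subset\R$, and lower-dimensional ``graph'' strata, each of which is the graph of a continuous definable function $\tau_\alpha: Z\to \R$. The vertical fibers $\{v\}\times\R$ and $\{w\}\times\R$ then inherit combinatorially identical stratifications indexed by the finite set $\{\tau_\alpha\}_\alpha$. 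By Remark~\ref{rmk:omin-pers}, the birth and death values of $PH_k(M,h_v)$ in any degree $k$ coincide with the critical values of $h_v|_M$, which are precisely the heights $\tau_\alpha(v)$ at which the topological type of $M_{v,t}$ changes, and similarly for $w$.

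The main obstacle is to verify that the bijection $\tau_\alpha(v)\mapsto \tau_\alpha(w)$ preserves order, so that it extends to a genuine order-preserving homeomorphism of $\R$. The key observation is that each $\tau_\alpha$ is continuous on the connected stratum $Z$, so the relative order of the finite set $\{\tau_\alpha(v)\}_\alpha$ cannot change as $v$ varies within $Z$: any crossing $\tau_\alpha(v_0)=\tau_\beta(v_0)$ with $\alpha\ne\beta$ would violate the disjointness of the strata graphing them. Consequently, $\tau_\alpha(v)$ and $\tau_\alpha(w)$ appear in the same relative order, and the order-preserving piecewise-linear homeomorphism of $\R$ that sends $\tau_\alpha(v)\mapsto \tau_\alpha(w)$ for every $\alpha$ and interpolates linearly between consecutive critical values is the required stratum-preserving, order-preserving bijection. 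This completes the proof modulo the standard o-minimal refinement and trivialization machinery invoked above.
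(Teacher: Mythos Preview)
Your proof is correct and follows essentially the same two-stage strategy as the paper: first stratify $\pi:X_M\to S^{d-1}\times\R$ via Theorem~\ref{thm:cts-definable-strat}, then stratify the further projection $S^{d-1}\times\R\to S^{d-1}$ compatibly with the first stratification. The one place where your execution differs from the paper is in establishing the order-preserving bijection of critical values. The paper chooses a definable path $\gamma$ in the stratum $Z$ from $v$ to $w$, refines the stratification of $S^{d-1}\times\R$ to be compatible with $\gamma((0,1))$, and then argues that the strata lying over the path are $1$- and $2$-manifolds (a $0$-stratum would violate the submersion condition), so the $1$-manifolds can be ``followed'' from the fiber over $v$ to the fiber over $w$. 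You instead work globally over $Z$: the submersion condition forces every stratum over $Z$ to have dimension $\dim Z$ or $\dim Z+1$, and because the fibers of $\mathrm{pr}_1$ are copies of the totally ordered line $\R$, each connected $\dim Z$-dimensional stratum is the graph of a single continuous definable function $\tau_\alpha:Z\to\R$; disjointness of these graphs on the connected set $Z$ then forces the $\tau_\alpha$ to never cross, giving order preservation directly. Your version is arguably cleaner since it avoids the auxiliary path refinement, while the paper's path argument is perhaps more visually concrete; both are doing the same thing, namely tracking the finitely many critical-value ``sheets'' across the stratum.
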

\begin{proof}
As already proved in Lemma \ref{lem:constructible-ECT}
\[
X_M:=\{(x,v,t)\in M\times S^{d-1} \times \R \mid x\cdot v\leq t\}
\]
is an element of $\Omin_{2d+1}$, and thus admits a 
definable $C^p$ Whitney stratification compatible with $X_M$ and $X_M^c$.
The projection map $\pi:\R^d\times \R^d \times \R \to \R^d \times \R$ onto the last two factors is a continuous definable map.
By Theorem~\ref{thm:cts-definable-strat} the map $\pi$ admits a $C^p$ stratification $(\mathcal{X},\mathcal{Y})$ that is compatible with $X_M$ and the trivializing partition $\{A_i\}$ of $S^{d-1}\times \R$ specified in the proof of Lemma \ref{lem:constructible-ECT}.

As was done in Lemma \ref{lem:constructible-ECT}, one can also project further from $S^{d-1}\times \R$ onto $S^{d-1}$ and this map will be a continuous definable map, which in turn will admit a definable $C^p$ stratification that is compatible with the trivializing partition $\{A_i\}$ of $S^{d-1}\times \R$ and $\{B_i\}$ of $S^{d-1}$.
Lemma \ref{lem:constructible-ECT} provides definable homeomorphisms $\pi^{-1}(B_i)\cap A_j \to B_i\times F_{ij}$, which can be pieced together to find a definable homeomorphism $\pi^{-1}(B_i) \to B_i\times \R$
where $\R$ is stratified in a way that is compatible with the the partition $F_{i1},\ldots,F_{in}$ of $\R$.
If $v$ and $w$ are in the same stratum of $B_i$, then restricting this homeomorphism to the pre-images of both will induce a zig-zag
\[
\pi^{-1}(v)\hookrightarrow \pi^{-1}(B_i) \cong B_i\times \R \cong \pi^{-1}(B_i) \hookleftarrow \pi^{-1}(w)
\]
that specializes to an order and stratum-preserving homeomorphism from $\pi^{-1}(v)$ to $\pi^{-1}(w)$.
As a reminder, every topological change in the filtration induced by $h_v$ is witnessed by a $0$-cell in $\{F_{ij}\}$.
Since each finite birth or death time in the persistence diagram $PH_k(M,h_v)$ corresponds to a homological change, and thus a topological change, the homeomorphism $\pi^{-1}(v)\cong \pi^{-1}(w)$ will match $0$-cells of $\pi^{-1}(v)$, which include births/deaths of inessential features, with $0$-cells in $\pi^{-1}(w)$. This completes the proof.
\end{proof}

Lemma \ref{lem:strat} simply guarantees the existence of a stratification of the sphere and gives criteria for determining when two directions are in the same stratum.
To provide more explicit relationships between Euler curves or persistence diagrams associated to vectors in the same stratum, we specialize to sets $M\in \Omin_d$ that are (PL embedded) geometric simplicial complexes.
We now recall some of the basic definitions.

\begin{defn}
A \define{geometric $k$-simplex} is the convex hull of $k+1$ affinely independent points $v_0,v_1, \ldots v_k$ and is denoted $[v_0,v_1,\ldots,v_k]$.
We call $[u_0, u_1, \ldots u_j]$ a \define{face} of $[v_0,v_1, \ldots v_k]$ if $\{u_0, u_1, \ldots u_j\}\subset \{v_0,v_1, \ldots v_k\}$.
\end{defn} 

\begin{ex}
For example, consider three points $\{v_0,v_1,v_2\}\subset \R^d$ that determine a unique plane containing them. 
The $0$-simplex $[v_0]$ is the vertex $v_0$. 
The $1$-simplex $[v_0,v_1]$ is the edge between the vertices $v_0$ and $v_1$.
Note that $v_0$ and $v_1$ are faces of $[v_0,v_1]$.
The $2$-simplex $[v_0, v_1, v_2]$ is the triangle bordered by the edges $[v_0,v_1]$, $[v_1, v_2]$ and $[v_0, v_2]$, which are also faces of $[v_0, v_1, v_2]$.
\end{ex}

\begin{rmk-defn}[Orientations]
Traditionally, we view the order of vertices in a $k$-simplex as indicating an equivalence class of orientations, where if $\tau$ is a permutation then $[v_0,v_1,\ldots,v_k] = (-1)^{\operatorname{sgn}(\tau)}[v_{\tau(0)}, v_{\tau(1)}, \ldots , v_{\tau(k)}]$. 
However, for the purposes of this paper we can ignore orientation. 
\end{rmk-defn}

\begin{defn}
A \define{finite geometric simplicial complex} $K$ is a finite set of geometric simplices such that
\begin{itemize}
\item[(1)] Every face of a simplex in $K$ is also in $K$;
\item[(2)] If two simplices $\sigma_1,\sigma_2$ are in $K$ then their intersection is either empty or a face of both $\sigma_1$ and $\sigma_2$.
\end{itemize}
\end{defn}

\begin{rmk-defn}[Re-Triangulation]
Strictly speaking we only care about the embedded image of the finite simplicial complex. 
We consider different triangulations as equivalent; our uniqueness results will always be up to re-triangulation.
\end{rmk-defn}


For a pair of distinct points $x_i,x_j\in \R^d$ let $W(\{x_i, x_j\})\subset \R^d$ be the hyperplane $\{v\in \R^d: x_i\cdot v= x_j \cdot v\}$ that is orthogonal to the vector $x_i-x_j$. 
This hyperplane divides the sphere $S^{d-1}$ into two halves depending on whether $h_v(x_i)>h_v(x_j)$ or $h_v(x_j)>h_v(x_i)$. 
More generally, for a set of points $X=\{x_1, x_2, \ldots x_k\}$ we can define 
\begin{align*}
W(X):=\bigcup_{i\neq j} W(\{x_i, x_j\})
\end{align*}
as the union of the hyperplanes determined by each pair of distinct points.

\begin{defn}
Let $X=\{x_1, x_2, \ldots x_k\}$ be a finite set of points in $\R^d$ and let $W(X)$ be the union of hyperplanes determined by each pair of points.
The hyperplane union $W(X)$ induces a stratification of $S^{d-1}$, which we call the \define{hyperplane division} of $S^{d-1}$ by $X$.
Define  $\Sigma(X)$ as  $S^{d-1} \setminus W(X)$.
\end{defn}

\begin{rmk-defn}[Stratified space structure]
For a direction $v$, consider the simplicial complex $P(v)$ over the vertex set $X$ where $\Delta \in P(v)$ if $v$ is perpendicular to the affine plane spanned by the $x_i\in \Delta$. 
We can define a function $N:S^{d-1}\to \Z$ by $$N(v)=\sum_{\text{locally maximal } \Delta\subset P(v)} \dim(\Delta).$$  If the points in $X$ lie in generic position then the function $N$ induces a stratified space structure on the sphere where the $d-k-1$ dimensional strata correspond to the connected components of $N^{-1}(k)$. Notably the top dimensional strata (with dimension $d-1$) are the connected components on $\Sigma(X)$. 
Notice that even when the hyperplanes are not in general position we can find, by virtue of Theorem~\ref{thm:compatible-strat}, a definable $C^p$ Whitney stratification of $S^{d-1}$ that is compatible with the hyperplanes and all of their intersections.
\end{rmk-defn}

\begin{lem}
Let $X=\{x_1, x_2, \ldots x_k\}\subset \R^d$.
If $v_1,v_2\in S^{d-1}$ are in the same stratum of $\Sigma(X)$, 
then the order of $\{v_1\cdot x_i\}_{i=1}^k$ is the same as the order of the $\{v_2 \cdot x_i\}_{i=1}^k$.
\end{lem}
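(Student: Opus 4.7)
The plan is to reduce the statement to a sign-constancy argument for the continuous functions $f_{ij}(v) := v \cdot (x_i - x_j)$ on $S^{d-1}$. The point is that the zero set of $f_{ij}$ on the sphere is exactly $W(\{x_i,x_j\}) \cap S^{d-1}$, so a direction $v$ lies in $\Sigma(X)$ if and only if $f_{ij}(v) \neq 0$ for every pair $i\neq j$. Consequently, on any stratum of $\Sigma(X)$ the strict total order on $\{v\cdot x_i\}_{i=1}^k$ is entirely determined by the signs of the $f_{ij}$.

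First I would recall that, by definition, the strata of $\Sigma(X)$ are the connected components of the open subset $S^{d-1}\setminus W(X)$. Being open in a manifold, they are locally path-connected, hence path-connected. So given $v_1,v_2$ in the same stratum, we may choose a continuous path $\gamma\colon [0,1]\to S^{d-1}$ with $\gamma(0)=v_1$, $\gamma(1)=v_2$, and $\gamma([0,1])\subset \Sigma(X)$.

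Next, for each ordered pair $(i,j)$ with $i\neq j$, consider the continuous real-valued function $g_{ij}(t) := f_{ij}(\gamma(t)) = \gamma(t)\cdot x_i - \gamma(t)\cdot x_j$. Since $\gamma(t)\in \Sigma(X) \subseteq S^{d-1}\setminus W(\{x_i,x_j\})$ for all $t$, the function $g_{ij}$ never vanishes on $[0,1]$. By the intermediate value theorem, $g_{ij}$ has constant sign on $[0,1]$. In particular, $v_1\cdot x_i > v_1 \cdot x_j$ if and only if $v_2\cdot x_i > v_2\cdot x_j$. Applying this to all pairs $(i,j)$ shows that the two strict orderings of $\{v_\ell \cdot x_i\}_{i=1}^k$ for $\ell=1,2$ coincide, which is exactly the conclusion.

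There is no serious obstacle here; the only point that requires a brief justification is the path-connectedness of a stratum, which follows because each stratum is an open subset of the manifold $S^{d-1}$. The argument is essentially a single application of the intermediate value theorem once the correct continuous functions $f_{ij}$ are isolated.
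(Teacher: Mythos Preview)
Your proof is correct and follows essentially the same approach as the paper: both arguments observe that for each pair $i\neq j$ the sign of $v\cdot(x_i-x_j)$ is constant on a connected component of $\Sigma(X)$, hence the pairwise comparisons agree. The paper states this in one line (same hemisphere of $W(\{x_i,x_j\})$), whereas you spell out the path-connectedness and intermediate value theorem, but the content is identical.
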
 
\begin{proof}	
Observe that $v_1$ and $v_2$ lie in the same hemisphere of $W(\{x_i,x_j\})$ and so $h_{v_1}(x_i)>h_{v_1}(x_j)$ if and only if $h_{v_2}(x_i)>h_{v_2}(x_j)$.
\end{proof}

\begin{defn}
For each vertex $x\in K$,  the \define{star} of $x$, denote $\St(x)$ is the set of simplices containing $x$.
Given a function $f:X\to \R$ we can define the \define{lower star} of $x$ with respect to $f$, denoted $\LwSt(x,f)$, as the subset of simplices $\St (x)$ whose vertices have function values smaller than or equal to $f(x)$. Both stars and lower stars are generally not simplicial complexes as they are not closed under the face relation.
\end{defn}

\begin{rmk-defn}[Topological Interpretation of the Star]
Although a geometric simplex is defined here in terms of convex hulls, which are closed when viewed as topological spaces, they are better viewed as topological spaces via their interiors.
For example, if $K$ is the simplicial complex consisting of the 1-simplex $[x,y]$ along with its two faces $[x]$ and $[y]$, then according to the above definition $\St(x)=\{[x], [x,y]\}$.
If we identify each geometric simplex with its interior, then the star is the ``open star,'' which in this case is the half-open interval $\St(x)=[x,y)$.
Below we will give a combinatorial formula for computing the Euler characteristic of the star, which in this case is $\chi(\St(x))=1-1=0$.
Note that if one is used to thinking in terms of Euler characteristic of the underlying space, then one must use compactly-supported Euler characteristic of the open star in order for these viewpoints to cohere.
\end{rmk-defn}

For a finite geometric simplicial complex $K\subset \R^d$ with vertex set $X$, the height function $h_v:K\to\R$ is the piece-wise linear extension of the restriction of $h_v$ to the set of vertices $X$. 
When $v\notin W(X)$ then all the function values of $h_v$ over $X$ are unique. This implies that each simplex belongs to a unique lower star, namely to the vertex with the highest function value. 
We will make essential use of the following result.


\begin{prop}[Lem.~2.3 of~\cite{BB-Morse} and \S VI.3 of~\cite{EH-book}]\label{prop:lower-star}
For a piecewise linear function $f: K \to \R$ defined on a finite geometric simplicial complex $K$ with vertex set $X$, we have that for every real number $t\in\R$, the sublevel set $f^{-1}(-\infty, t]$ is homotopic to $\bigcup_{\{x\in X:v\cdot x\leq t\}}\LwSt(x,f)$.
\end{prop}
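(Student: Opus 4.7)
The plan is to reinterpret the right-hand side as a subcomplex of $K$ and exhibit an explicit deformation retraction from the sublevel set onto it. First, observe that because $f$ is the piecewise linear extension of its values on the vertex set $X$, the value of $f$ on any simplex $\sigma$ attains its maximum at a vertex. Hence a simplex $\sigma \in K$ is contained in $f^{-1}(-\infty,t]$ if and only if every vertex of $\sigma$ has $f$-value at most $t$. Let $K_t$ denote the subcomplex of such simplices.

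The first step is to identify $K_t$ with the union $\bigcup_{\{x \in X : f(x) \leq t\}} \text{LwSt}(x,f)$. For one inclusion, any $\sigma \in \text{LwSt}(x,f)$ satisfies $f(y) \leq f(x) \leq t$ for all vertices $y$ of $\sigma$, so $\sigma \in K_t$. Conversely, any $\sigma \in K_t$ lies in the lower star of its vertex of maximum $f$-value, which by assumption has $f$-value $\leq t$.

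The second and main step is to construct a strong deformation retraction $r: f^{-1}(-\infty,t] \to K_t$. For each simplex $\sigma = [v_0,\dots,v_k] \in K$, partition the vertex set as $L(\sigma) = \{v_i : f(v_i) \leq t\}$ and $H(\sigma) = \{v_i : f(v_i) > t\}$. If $H(\sigma) = \varnothing$ then $\sigma \subseteq K_t$ and we do nothing. Otherwise, $\sigma \cap f^{-1}(-\infty,t]$ is a convex polytope cut off by the affine hyperplane $\{f = t\}$ inside $\sigma$, and it deformation retracts onto the face $\sigma_L := \text{conv}(L(\sigma))$ by the straight-line homotopy $H_s(x) = (1-s)x + s \cdot \pi_L(x)$, where $\pi_L$ projects each point of $\sigma \cap f^{-1}(-\infty,t]$ along barycentric coordinates to the low face $\sigma_L$ (explicitly, rescale the $L$-coordinates of $x$ to sum to $1$ and set the $H$-coordinates to zero).

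The final step is to check that these simplexwise homotopies glue to a global deformation retraction. If $\tau$ is a face of $\sigma$, then $L(\tau) = L(\sigma) \cap \tau$, so the projection $\pi_L$ on $\sigma$ restricts to the projection on $\tau$, and the straight-line homotopies agree on the overlap. The resulting map $r : f^{-1}(-\infty,t] \to K_t$ is continuous, is the identity on $K_t$, and is homotopic to the inclusion, so $f^{-1}(-\infty,t] \simeq K_t$ as desired. The main subtlety is verifying this compatibility on faces, but it follows directly from the fact that the partition $L \sqcup H$ and the barycentric projection are defined purely in terms of vertex $f$-values, which restrict consistently to faces.
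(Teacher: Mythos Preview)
The paper does not supply its own proof of this proposition; it is stated with citations to Bestvina--Brady and Edelsbrunner--Harer and then used as a black box. Your argument is correct and is essentially the standard one found in those references: you identify the union of lower stars with the full subcomplex $K_t$ on vertices of value at most $t$, and then build a simplex-by-simplex deformation retraction of $f^{-1}(-\infty,t]$ onto $K_t$ by barycentric projection to the low face, checking that these retractions agree on shared faces because the partition $L\sqcup H$ depends only on vertex values.

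Two small points worth making explicit. First, in your ``otherwise'' case you tacitly assume $L(\sigma)\neq\varnothing$; when $L(\sigma)=\varnothing$ the intersection $\sigma\cap f^{-1}(-\infty,t]$ is empty (since $f$ is strictly greater than $t$ on the face spanned by $H(\sigma)$), so there is nothing to retract. Second, the projection $\pi_L$ is well-defined on $\sigma\cap f^{-1}(-\infty,t]$ because any point there must have positive total weight on $L(\sigma)$, by the same convexity observation. With these remarks added your sketch is complete.
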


Since both $\bigcup_{\{x\in X:v\cdot x\leq t\}}\text{LwSt}(x,f)$ and $f^{-1}(-\infty,t]$ are also compact we conclude that they have the same definable Euler characteristic or Euler characteristic with compact support. 
We will use the notation
\[
K^{(x,v)}=\left\{\Delta\in \St(x) \subseteq K \mid x\cdot v=\max_{y\in \Delta}\{ y\cdot v\} \right\}
\]
to denote the lower star of $x$ in the filtration by the height function in direction $v$. 


\begin{lem}\label{lem:KxU}
Let $K\subset \R^d$ be a finite simplicial complex with vertex set $X$. 
Let $U$ be a connected subset of $\Sigma(X)$.
Then for fixed $x\in X$, the lower stars $K^{(x,v)}$ are all the same for all $v\in U$. 
We will sometimes denote the lower star by $K^{(x,U)}$ to highlight this consistency.
Moreover, for all $v\in U$ we have the following formula for the Euler Characteristic Transform:
$$\ECT(K, v)=\sum_{x \in X} 1_{\{[v\cdot x,\infty)\}} \chi(K^{(x,U)})$$
\end{lem}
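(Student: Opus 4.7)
The plan is to split the lemma into its two independent assertions and handle them in turn. For the invariance claim (that $K^{(x,v)}$ does not depend on the choice of $v \in U$), I would argue as follows. Since $U \subseteq \Sigma(X) = S^{d-1} \setminus W(X)$, every $v \in U$ strictly separates the heights $\{v \cdot x_i\}$. The unnumbered lemma immediately preceding the statement guarantees that the total order on $\{v \cdot x_i\}_{i=1}^k$ is constant as $v$ varies over the connected set $U$. Because membership of a simplex $\Delta \in \text{St}(x)$ in $K^{(x,v)}$ is the purely order-theoretic condition that $x$ attains the maximum of $h_v$ among the vertices of $\Delta$, this membership does not depend on the choice of $v \in U$, which justifies the notation $K^{(x,U)}$ and also identifies it with the lower star $\text{LwSt}(x, h_v)$ for every $v \in U$.

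For the ECT formula I would invoke Proposition~\ref{prop:lower-star} with $f = h_v$: the sublevel set $K \cap \{y : y \cdot v \leq t\}$ is homotopy equivalent to $L_{v,t} := \bigcup_{\{x : v \cdot x \leq t\}} \text{LwSt}(x, h_v)$. A quick verification shows that $L_{v,t}$ is actually a subcomplex of $K$, since any face of a simplex $\Delta \in \text{LwSt}(x, h_v)$ with $v \cdot x \leq t$ has all of its vertex heights bounded by $v \cdot x \leq t$, hence sits in the lower star of its own highest vertex, which is still one of the vertices at height $\leq t$. Because both sides of the homotopy equivalence are compact, their ordinary (and hence also definable) Euler characteristics agree. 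Next I would note that every simplex of $K$ lies in exactly one lower star---the one indexed by its unique height-maximizing vertex, which is well-defined because $v \in \Sigma(X)$---so grouping the alternating simplex sum $\chi(L_{v,t}) = \sum_{\Delta \in L_{v,t}} (-1)^{\dim \Delta}$ by maximizing vertex, and replacing $\text{LwSt}(x, h_v)$ by $K^{(x,U)}$ via the first step, yields $\chi(K \cap \{y : y \cdot v \leq t\}) = \sum_{\{x : v \cdot x \leq t\}} \chi(K^{(x,U)})$. Rewriting the constraint $v \cdot x \leq t$ as $t \in [v \cdot x, \infty)$ then produces the displayed formula.

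The main subtlety I expect to navigate carefully is the interplay between the two notions of Euler characteristic in the paper: Proposition~\ref{prop:lower-star} supplies only a homotopy equivalence, and the definable Euler characteristic is not homotopy invariant in general (as illustrated by the authors' earlier example of the open unit interval). Compactness of both the sublevel set and the subcomplex $L_{v,t}$ is precisely what allows ordinary and definable Euler characteristics to be identified here and the homotopy equivalence to pass through to an equality of $\chi$.
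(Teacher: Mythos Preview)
Your proposal is correct and follows essentially the same approach as the paper's proof: order-invariance of the heights on $U$ for the first claim, and the lower-star decomposition for the ECT formula. Your version is more detailed---you explicitly invoke Proposition~\ref{prop:lower-star}, verify that $L_{v,t}$ is a genuine subcomplex, and carefully address the compactness needed to pass the homotopy equivalence to an equality of definable Euler characteristics---whereas the paper's proof simply asserts that the change in $\chi$ as the height passes $h_v(x)$ equals $\chi(K^{(x,U)})=\sum_{\Delta\in K^{(x,U)}}(-1)^{\dim\Delta}$ and then sums.
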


\begin{proof}	
We can see that the $K^{(x,v)}$ agree for all for all $v\in U$ by the 
observation that the vertices of $X$ appear in the same order in each of the $h_v$.
This $K^{(x,v)}$ is the subset of cells that are added at the height value $h_v(x)$.

Note that the change in the Euler characteristic of the sublevel sets of $h_v$ as the height value passes $h_v(x)$ is 
$$\chi(K^{(x,U)})=\sum_{\Delta \in K^{(x,U)}} (-1)^{\dim \Delta}.$$ 
Here we use the notation $\chi(K^{(x,U)})$ for the (compactly-supported) Euler characteristic of this set of simplices.
Consequently, we can write the Euler characteristic curve for $K$ in direction $v\in U$ as
$$\ECT(K, v)=\sum_{x\in X} 1_{\{[v\cdot x,\infty)\}} \chi(K^{(x,U)})$$
Note that many of the elements in this sum are zero.
\end{proof}

\begin{ex}
As a simple example, let $K$ be an embedded geometric 2-simplex $[x,y,z]$ along with all of its faces and suppose $v$ is a direction in which, filtering by projection to $v$, the vertices appear in the order $x$, then $y$, then $z$.
Note that $K^{(x,v)}=[x]$, which has Euler characteristic 1; $K^{(y,v)}=[x,y] \cup [y]$, which has Euler characteristic $(-1)+(-1)^0=0$; and $K^{(z,v)}=[x,y,z] \cup [y,z] \cup [x,z] \cup [z]$, which has Euler characteristic $(-1)^2+(-1)+(-1)+(-1)^0=0$.
\end{ex}


We now show that when two vectors $v$ and $w$ lie in the same connected component of $\Sigma(X)$, then the Euler curve or the persistence diagrams for $h_v$ can be used to determine the Euler curve or persistence diagrams for $h_w$.
We thank an anonymous referee for suggesting the following improved statement and proof.


\begin{prop}\label{prop:deduce}
Let $K\subset \R^d$ be a finite simplicial complex with known vertex set $X$. 
If $v,w\in S^{d-1}$ lie in the same connected component of $\Sigma(X)$, 
then given the Euler curve for $v$ we can deduce the Euler curve for $w$.
Similarly, given the $k$th persistence diagram associated to filtering by $h_v$ we can deduce the $k$-th persistence diagram for filtering by $h_w$.
\end{prop}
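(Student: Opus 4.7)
The plan is to extract from Lemma~\ref{lem:KxU}, together with the unnamed lemma immediately preceding it in the excerpt, two facts about any two directions $v,w$ in a common stratum $U$ of $\Sigma(X)$: the linear order of the projected heights $\{v\cdot x\}_{x\in X}$ agrees with the order of $\{w\cdot x\}_{x\in X}$, and the lower stars $K^{(x,v)}=K^{(x,w)}=K^{(x,U)}$ coincide for every vertex $x$. Everything below is essentially bookkeeping driven by these two observations.

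For the Euler curve, Lemma~\ref{lem:KxU} gives
\[
\ECT(K,v)(t) \;=\; \sum_{x \in X} 1_{\{[v \cdot x,\,\infty)\}}(t)\,\chi(K^{(x,U)}).
\]
Because $v\notin W(X)$, the real numbers $v\cdot x$ are pairwise distinct, so this step function has at most one jump at each $t=v\cdot x$, and that jump equals $\chi(K^{(x,U)})$ (possibly zero). Since we know $X$ and $v$, we can compute each $v\cdot x$ and therefore read off the coefficient $\chi(K^{(x,U)})$ directly from the Euler curve for $v$. These coefficients depend only on the stratum, so the Euler curve for $w$ is reconstructed by plugging them into the analogous formula with $w$ in place of $v$:
\[
\ECT(K,w)(t)\;=\;\sum_{x\in X} 1_{\{[w\cdot x,\,\infty)\}}(t)\,\chi(K^{(x,U)}).
\]

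For the persistence diagram, I would use that sublevel-set persistent homology of a lower-star filtration depends only on the combinatorial data of the filtration: the linear order of the vertices together with, for each vertex, its lower star. This is the content of Proposition~\ref{prop:lower-star} combined with the standard matrix-reduction description of persistence. Enumerate the vertices in their common height order as $x_{\sigma(1)},\ldots,x_{\sigma(n)}$; the persistence algorithm then pairs simplices, and hence their creator vertices, in exactly the same way whether we filter by $h_v$ or by $h_w$. Thus every non-diagonal point of $PH_k(K,h_v)$ has the form $(v\cdot x_{\sigma(i)},v\cdot x_{\sigma(j)})$ or $(v\cdot x_{\sigma(i)},\infty)$, and because $v\notin W(X)$ the vertices $x_{\sigma(i)},x_{\sigma(j)}$ are uniquely recovered from the coordinates. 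The $w$-diagram is then obtained by sending each such point to $(w\cdot x_{\sigma(i)},w\cdot x_{\sigma(j)})$, respectively $(w\cdot x_{\sigma(i)},\infty)$.

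The step I expect to need the most care is the persistence case: one must cleanly justify that, in a lower-star filtration, the birth--death pairing of simplices---and hence of their creator vertices---is a combinatorial invariant of the insertion order and the lower-star cell structure, rather than of the particular real heights chosen. This is standard, but should be stated uniformly across all homological degrees $k$ and should explicitly cover essential classes corresponding to points of the form $(\,\cdot\,,\infty)$, where the same vertex-relabeling recipe applies.
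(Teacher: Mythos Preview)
Your argument is correct. For the Euler curve it is essentially identical to the paper's: both invoke Lemma~\ref{lem:KxU} and the invariance of the lower stars $K^{(x,U)}$ across the stratum, and then reconstruct $\ECT(K,w)$ from the coefficients $\chi(K^{(x,U)})$ read off of $\ECT(K,v)$.

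For the persistence statement your route differs slightly from the paper's. You appeal to the combinatorial invariance of the matrix-reduction pairing under a fixed insertion order and lower-star structure, then relabel each paired coordinate $v\cdot x$ by $w\cdot x$. The paper instead builds an auxiliary step function $k_v:K\to\R$ (sending each point to the $h_v$-value of the top vertex of the simplex containing it), observes via Proposition~\ref{prop:lower-star} that $k_v$ and $h_v$ have identical diagrams, and then notes that any order-preserving homeomorphism $\phi:\R\to\R$ with $\phi(v\cdot x)=w\cdot x$ satisfies $k_w=\phi\circ k_v$, so the diagram for $w$ is the image of the diagram for $v$ under $(\phi,\phi)$. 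The paper's version is a bit cleaner in that it sidesteps any explicit reference to the reduction algorithm and handles all degrees and essential classes uniformly in one line; your version has the advantage of making the vertex-to-vertex relabeling completely explicit, which is exactly what is used downstream in Theorem~\ref{thm:finitenum}. The endpoint---replace each $v\cdot x$ by $w\cdot x$---is the same.
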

\begin{proof}


Consider the height function $h_v:K \to \R$ on our PL embedded simplicial complex $K\subset \R^d$.
Recall that the vertex set of $K$ is $X$.
Associated to the function $h_v$ is another (typically discontinuous) function $k_v: K \to \R$ defined by setting $k_v(x) = h_v(\hat{x})$ where $\hat{x}$ is the unique vertex whose lower star contains the simplex in whose interior $x$ belongs.
Notice that this has the effect of increasing the value on the interior of a simplex to the maximum value obtained on any of its vertices.
Consequently, for all $x\in K$ we have that $h_v(x)\leq k_v(x)$.
This implies that for every $t\in \R$ we have the containment $k_v^{-1}(-\infty, t] \subseteq h_v^{-1}(-\infty, t]$.
Notice that the sublevel set $k_v^{-1}(-\infty, t]=\bigcup_{\{x\in X:v\cdot x\leq t\}}\text{LwSt}(x,h_v)$.
By virtue of Proposition~\ref{prop:lower-star}, we have that this inclusion of sublevel sets is a homotopy equivalence for every $t$, so $h_v$ and $k_v$ have identical persistence diagrams in all dimensions.





The virtue of defining the auxiliary function $k_v$ is that it easy to compare with $k_w$ when $v$ and $w$ belong to the same component of $\Sigma(X)$.
This is because the level sets of $k_v, k_w$ are empty except at the values of the inner product $x\cdot v$ and $x\cdot w$. 
At those values, the level sets are precisely the lower stars.
As such, let $\phi : \R \to \R$ be any order-preserving homeomorphism that maps the values $x\cdot v$ to the values $x\cdot w$ for all $x\in X$. 
Such a $\phi$ exists because the order of the $\{x\cdot v\}$ and $\{x \cdot w\}$ for $x\in X$ are the same.
The mapping $\phi$ provides a bijection between these sets of values, with the property that the corresponding levelsets (i.e. lower stars) are equal, by virtue of Lemma~\ref{lem:KxU}.
This implies that $k_w = \phi \circ k_v.$
It immediately follows that $\ECT(K,v) = \ECT(K,w) \circ \phi^{-1}$. 
Similarly, $\PHT(K,w)$ can be obtained from $\PHT(K,v)$ by transforming the persistence diagram plane using the function $(\phi,\phi):\R^{2+} \to \R^{2+}$.
This is because the birth time $b_i$ and death time $d_i$ of a feature in $\PHT(K,v)$ is transformed to a birth time $\phi(b_i)$ and death time $\phi(d_i)$ for a feature in $\PHT(K,w)$.

\end{proof}

\section{Uniqueness of the Distributions of Euler curves up to $O(d)$ actions}\label{sec:affine}


In order to guarantee that two ``close'' shapes have close transforms---using either the ECT or PHT---one must first align or register the shapes' orientations in space.
For example, if one wanted to compare simplicial versions of a lion and a tiger, we would need to first embed them in such a way that they are facing the same direction. 
To rephrase this, for centered shapes we would like to make them as close as possible by using actions of the special orthogonal group $SO(d)$. 
If we wished to optimize their alignment by also allowing reflections, we would like to make them as close as possible by using actions of the orthogonal group $O(d)$.
In general, aligning or registering shapes is a challenging problem~\cite{Puente}.

In this section we show how studying distributions on the space on Euler curves---or persistence diagrams, since homology determines Euler characteristic---can bypass this process. We can construct distributions of topological summaries through the pushforward under the topological transform of the uniform measure over the sphere. If $\mu$ is the Lesbesgue measure over $S^{d-1}$ then the pushforward measure $\ECT_*(\mu)$ is a measure over the space of Euler curves defined by $\ECT_*(\mu)(A)=\mu(\ECT^{-1}(A))$. These pushforward measures are naturally invariant of $O(d)$ actions as the Lebesgue measure on the sphere is $O(d)$ invariant; acting on a shape and acting on the space of directions using the same element of $O(d)$ produces the same Euler curve.
The key development of this section, Theorem~\ref{thm:align}, is the proof that for ``generic'' shapes the pushforward of the Lebesgue measure to the space of Euler curves uniquely determines that shape up to an $O(d)$ action. 
Additionally, our proof of this theorem is constructive. 
If we know the actual transforms of two generic shapes and we recognize they produce the same distribution of Euler curves, then we construct an element of $O(d)$ that relates them.
However, the deeper implication is that knowing the distribution of Euler curves for a generic shape is a sufficient statistic for shape comparison.
Continuing the aforementioned example, we can compare an arbitrarily embedded tiger and lion without ever aligning them. The theoretical ideas stated in this section were translated into an algorithm that ``aligns" shapes without requiring correspondences in \cite{Wang}. 

We now specify what we mean by a ``generic shape.''

\begin{defn}
A geometric simplicial complex $K$ in $\R^d$ with vertex set $X$ is \define{generic} if
\begin{itemize}
\item[(1)] the Euler curves for the height functions $h_v$ are distinct for all $v\in S^{d-1}$, and
\item[(2)] the vertex set $X$ is in general position.
\end{itemize}
\end{defn}

Our first task is to bound from below the number of ``critical values'' on a generic geometric complex when filtered in a generic direction.
Since critical points and critical values are usually understood in a differentiable setting, and the shapes are dealing with are only piece-wise differentiable, we make precise what we mean by this intuitive term. 


\begin{defn}\label{defn:critical-value}
Let $f: K \to \R$ be a continuous function and $\chi_{f}: \R \to \Z$ the Euler curve of $f$.
We say that $t$ is an \define{Euler regular value} of $f: K \to\R$ if there is some open interval $I\subseteq \R$ containing $t$ where $\chi_f$ is constant. 
An \define{Euler critical value} is a real number which is not an Euler regular value.

A real number $t$ is called a \define{homological regular value} if there is some open interval $I\subseteq \R$ containing $t$ such that for all $a<b$ in $I$ the induced maps on homology by inclusion ($i_*:H_*(f^{-1}(-\infty, a]) \to H_*(f^{-1}(-\infty,b])$) are all isomorphisms.
A \define{homological critical value} is any real number that is not a homological regular value.
For the purposes of this paper we will just use the term \define{critical value} when there is no chance for confusion.
\end{defn}

\begin{rmk-defn}
There is some historical variation of the definition of a homological critical value and we refer the reader to~\cite{CritValue} for a comparison and contrast of two candidate definitions along with several interesting examples.
\end{rmk-defn}

We now state a lower bound on the number of critical values of a generic complex when filtered in a direction in $\Sigma(X)$.

\begin{lem}\label{lem:d-1}
If $K$ is a generic finite geometric simplicial complex embedded in $\R^d$ with vertex set $X$
then for all $v\in \Sigma(X)$ the height function $h_v$ has at least $d-1$ Euler critical values, i.e.~values for which the Euler characteristic of the sub-level set changes.

Furthermore, for each connected component $U\subset \Sigma(X)$ there exists $d-1$ linearly independent vertices $\{x_1^U, x_2^U, \ldots, x_{d-1}^U\}$ which are Euler critical for $ECT(K,v)$ for all $v\in U$.
\end{lem}

\begin{proof}
Let $U$ be a connected component of $\Sigma(X)$. The formula in Lemma~\ref{lem:KxU} implies the number of Euler critical values for $h_v$ ($v\in U$) is the number of vertices $x$ such that $\chi(K^{(x,U)})\neq 0$. Suppose that there are $k$ critical vertices  $x_1, x_2, \ldots x_k$, so that for all $v\in U$ 
$\ECT(K, v)=\sum_{i=1}^k1_{\{[v\cdot x_i,\infty)\}} \, \chi(K^{(x_i,U)}).$


Define a function $g:U\to \R^k$, with $g(v)=(v\cdot x_1, v\cdot x_2, \ldots,  v\cdot x_k)$ which is clearly continuous and is injective by our genericity assumptions. As $U$ is an open subset of a $S^{d-1}$ there is a open subset $A\subset \R^{d-1}$ and a homeomorphism $f:A\to U$. Their composition, $g\circ f:A\to \R^k$, is continuous and injective.  By Brouwer's Invariance of Domain Theorem we know that $A$ is homeomorphic to $g(f(A))$ and thus $\dim(g(f(A)))=d-1$. Since $f(U)\subset \R^k$ we conclude that $k\geq d-1$. 

By restricting to the first $d-1$ critical vertices if needed, we have found vertices $\{x_1^U, x_2^U, \ldots, x_{d-1}^U\}$ which are Euler critical for $ECT(K,v)$ for all $v\in U$. We know that these $x_i^U$ are linearly independent by our genericity assumptions.

\end{proof}



Before proceeding to our main result, we need two technical lemmas.

\begin{lem}\label{lem:orthogonal}
Let $\ell:\R^d\to\R^d$ be a linear map. If there is a non-empty open subset $U$ of $S^{d-1}$ such that $\| \ell(v) \|=1$ for all $v\in U$, then $\ell$ is an orthogonal transformation.
\end{lem}

\begin{proof}
Without loss of generality we can assume that $U$ is an open ball in $S^{d-1}$. We want to show that for $v,w\in U$ that $\langle \ell(v),\ell(w) \rangle=\langle v,w\rangle$. Since $v,w\in U$, with $U$ an open ball in $S^{d-1}$, we know that $(v+w)/\|v+w\|\in U$ and hence
$$\|\ell(v+w)\|=\left\| \|v+w\| \ell\left( \frac{v+w}{\|v+w\|}\right) \right\|=\|v+w\|.$$

We thus can compute the inner product of $\ell(v)$ and $\ell(w)$ by

\begin{align*}
\langle \ell(v),\ell(w)\rangle &=\frac{1}{2}(\langle \ell(v+w), \ell(v+w)\rangle -\langle \ell(v),\ell(v)\rangle -\langle \ell(w), \ell(w)\rangle)\\
&=\frac{1}{2}\left(\|\ell(v+w)\|^2 - \| \ell(v)\|^2 -\| \ell(w)\|^2\right)\\
&=\frac{1}{2}\left(\|v+w\|^2 - \| v\|^2 -\| w\|^2\right)\\
&=\langle v, w \rangle
\end{align*}

We can generate all of $\R^d$ from linear combinations of points in $U$. The inner product is bi-linear and so $\ell$ preserves the inner product over all of $\R^d$.
As $\ell$ preserves the inner product over $\R^d$, $\ell$ must be an orthogonal transformation. 
\end{proof}

\begin{lem}\label{lem:connected}
Let $\mathcal{S}$ be a definable stratification of $S^{d-1}$. 
Let $\mathcal{S}^{k}$ be the set that indexes connected components of $k$-dimensional strata within $\mathcal{S}$.
Let $G$ be the graph with vertex set that indexes top-dimensional strata, i.e. $V(G):=\mathcal{S}^{d-1}$, and with edge set that indexes codimension-1 strata, i.e. $E(G):=\mathcal{S}^{d-2}$.
We say that $i$ and $j$ are connected by edge $k$ if $S_k\leq S_i$ and $S_k\leq S_j$, i.e. $S_k\subset \overline{S_i}$ and $S_k\subseteq \overline{S_j}$.
With this construction, the graph $G$ is connected.
\end{lem}

\begin{proof}
Let $X_{d-3}$ be the union of strata of dimension $(d-3)$ and below in a definable stratification of $S^{d-1}$.
Let $Y:=S^{d-1}\setminus X_{d-3}$ be the complement of this union, which carries a definable stratification by restriction of the stratification $\mathcal{S}$ of $S^{d-1}$.
By Alexander Duality we have that $\tilde{H}_0(Y)\cong \tilde{H}^{d-2}(X_{d-3})=0$, which implies that $Y$ is connected.
This proves that the union of top-dimensional strata and codimension-1 strata is a connected subspace of $S^{d-1}$.

Now consider any partition of the vertex set $V(G)$ into subsets $V_0$ and $V_1$.
Let $Y_0$ and $Y_1$ denote the union of the corresponding top-dimensional strata represented by vertices in $V_0$ and $V_1$, respectively. 
Since the strata partition $Y$ we have that $Y=\overline{Y_0}\cup \overline{Y_1}$ is expressible as a union of two closed (in $Y$) sets.
Moreover, we know from the Axiom of the Frontier that $\overline{Y_0}\cap \overline{Y_1}$ is a union of $(d-2)$-strata.
If this intersection is empty, then there are no $(d-2)$-strata and we have that $Y$ is the union of a single top-dimensional stratum, thus proving that either $V_0$ or $V_1$ must be empty.
If the intersection $\overline{Y_0}\cap \overline{Y_1}$ is non-empty, then there must be some $(d-2)$-dimensional stratum in this intersection and thus some vertex in $V_0$ must be connected by an edge with some vertex in $V_1$. This completes the proof.
\end{proof}

The following is the main result of this section and is our formal statement about the uniqueness of a distribution over diagrams of curves up to an action by $O(d)$.

\begin{thm}\label{thm:align}
Let $K$ and $L$ be generic geometric simplicial complexes in $\R^d$. 
Let $\mu$ be Lesbesgue measure on $S^{d-1}$. 
If $\ECT(K)_*(\mu)=\ECT(L)_*(\mu)$ (that is the pushforward of the measures are the same), then there is some $\phi \in O(d)$ such that $L = \phi(K)$, that is to say that $L$ is some combination of rotations and reflections of $K$. 
\end{thm}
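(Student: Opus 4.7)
The plan is to use the equality of pushforward measures to produce a canonical self-homeomorphism $\phi$ of $S^{d-1}$, show that $\phi$ must be the restriction of some $L \in O(d)$, and then invoke injectivity of the ECT (Theorem~\ref{thm:ECT-injects}) to conclude that $K_2 = L(K_1)$.

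First I would define $\phi := \ECT(K_2)^{-1} \circ \ECT(K_1)$. Genericity condition~(1) makes each map $\ECT(K_i) : S^{d-1} \to \CF(\R)$ injective; Proposition~\ref{prop:PHT-ECT} gives continuity; and Remark~\ref{rmk:Hausdorff-ECs} ensures the codomain is Hausdorff when equipped with a finite $L^p$ norm. Hence each $\ECT(K_i)$ is a continuous injection from a compact space into a Hausdorff space, hence a homeomorphism onto its compact image. Because $\mu$ assigns positive mass to every nonempty open subset of $S^{d-1}$, the two images both coincide with the topological support of the common pushforward measure and are therefore equal, so $\phi$ is a well-defined self-homeomorphism of $S^{d-1}$. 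The measure-push equality also forces $\phi_*\mu = \mu$.

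Next, I would refine the stratifications $\Sigma(X_1)$ and $\phi^{-1}(\Sigma(X_2))$ to a common stratification. On each top-dimensional stratum $U$, Lemma~\ref{lem:KxU} expresses both $\ECT(K_1)(v)$ and $\ECT(K_2)(\phi(v))$ as sums of indicator functions of half-lines weighted by the Euler characteristics of lower stars. Matching these step functions at a point $v$ of an open dense subset of $U$ where all breakpoints are distinct, and observing that the pairing must be locally constant in $v$, one obtains a bijection $\sigma_U : I_{1,U} \to I_{2,\phi(U)}$ between the Euler-significant vertices with $v \cdot x = \phi(v) \cdot \sigma_U(x)$ for every $v \in U$ and every $x \in I_{1,U}$ (extending to all of $U$ by continuity of $\phi$). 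Lemma~\ref{lem:d-1} then guarantees $|I_{1,U}| \geq d-1$.

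The main obstacle is upgrading these per-stratum constraints to a single global $L \in O(d)$. On strata where $\{\sigma_U(x) : x \in I_{1,U}\}$ spans $\R^d$, the equations $\phi(v) \cdot \sigma_U(x) = v \cdot x$ determine $\phi|_U$ as the restriction of a unique linear map $L_U$ satisfying $L_U^T \sigma_U(x) = x$. When the matched vectors span only a hyperplane, I would combine these linear constraints with the unit-norm condition $|\phi(v)| = 1$ and, crucially, propagate information across codimension-one stratum walls: crossing such a wall swaps the order of exactly one pair of vertices of $K_1$, and the newly Euler-significant vertex that appears---by vertices-in-general-position (genericity condition~(2))---lies outside the previous hyperplane, yielding the missing linear equation. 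Continuity of $\phi$ across stratum boundaries then forces the locally defined $L_U$ to agree with a single linear map $L$ on all of $S^{d-1}$, and since $L$ restricts to a bijection of the unit sphere it lies in $O(d)$. Finally, using the identity $h_v \circ L^{-1} = h_{L(v)}$ valid for $L \in O(d)$, one computes $\ECT(L^{-1}(K_2))(v) = \ECT(K_2)(L(v)) = \ECT(K_1)(v)$ for all $v$, and Theorem~\ref{thm:ECT-injects} gives $L^{-1}(K_2) = K_1$, i.e., $K_2 = L(K_1)$. The hardest technical step is the spanning/gluing argument across stratum boundaries, where the full strength of both genericity hypotheses (distinct Euler curves and vertices in general position) is essential.
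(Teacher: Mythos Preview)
Your overall architecture matches the paper's proof closely: define $\phi=\ECT(K_2)^{-1}\circ\ECT(K_1)$, use the support argument to see the images agree, establish linearity of $\phi$ on each top-dimensional stratum, glue across walls, and finish with Theorem~\ref{thm:ECT-injects}. The place where your proposal diverges, and where there is a genuine gap, is precisely the step you flag as hardest: obtaining the $d$-th linear constraint on a stratum $U$ where the matched Euler-significant vertices $y_1,\dots,y_{d-1}$ span only a hyperplane $Y$.

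On such a stratum the $d-1$ equations $\phi(v)\cdot y_i=v\cdot x_i$ determine $\pi_Y(\phi(v))=A(v)$ for a linear $A$, and then $\phi(v)=A(v)+c(v)w_y$ with $c(v)^2=1-|A(v)|^2$. The unit-norm condition by itself pins down $c(v)$ only as $\pm\sqrt{q(v)}$ for a quadratic form $q$; there is no reason for $q$ to be the square of a linear form, so $\phi|_U$ need not be linear on this information alone. Your wall-crossing idea does not close the gap either: crossing a codimension-one wall alters only the lower stars of the two swapped vertices, and there is no guarantee that a \emph{new} Euler-significant vertex appears, let alone that the corresponding $y$-vertex lies outside $Y$ (general position of the vertex set does not control which subset happens to be Euler-significant). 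Even granting such a new equation, it holds on the adjacent stratum $U'$ and hence, by continuity, only on the $(d-2)$-dimensional wall; this does not determine $c$ on the $(d-1)$-dimensional interior of $U$.

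The paper's proof extracts the missing linear equation from the one hypothesis you recorded but never used: $\phi_*\mu=\mu$. Comparing Lebesgue volumes of a small set $A_\epsilon\subset U$ and of $\phi(A_\epsilon)$ via their projections onto $X$ and $Y$ (related by the linear map $T:x_i\mapsto y_i$ with volume distortion $\lambda_T$) and letting $\epsilon\to 0$ yields the identity $v\cdot w_x=\lambda_T\,\phi(v)\cdot w_y$ for all $v\in U$. Setting $x_d=w_x$, $y_d=\lambda_T w_y$ gives a genuine $d$-th linear constraint on $U$, after which your matrix argument goes through. In short, the measure-preserving property of $\phi$ is not a side remark but the engine of the argument; replacing it by the unit-norm condition or by wall-crossing does not suffice.
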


\begin{proof}
Recall that $\ECT(K)$ and $\ECT(L)$ are continuous and injective maps to the space of Euler curves. 
The hypothesis $\ECT(K)_*(\mu)=\ECT(L)_*(\mu)$ implies that the support of the pushforward measures are the same. 
We now argue that equality of supports implies equality of images.
Firstly, we note that the support of $\ECT(K)_*(\mu)$ (respectively $\ECT(L)_*(\mu)$) contains the image of $\ECT(K)$  (respectively $\ECT(L)$).
To see this, consider an Euler curve in the image and consider an open ball around that curve. Continuity implies that the pre-image is open and thus the Lebesgue measure is positive.
Secondly, we show that any Euler curve not in the image of $\ECT(K)$ is not in the support of $\ECT(K)_*(\mu)$.
To see this, we note that Remark~\ref{rmk:Hausdorff-ECs} implies that
$\ECT(K):S^{d-1} \to \CF(\R)$ is a continuous map from a compact space to a Hausdorff space, so the image is relatively closed in the subspace of possible Euler curves.
If an Euler curve is not in the image of $\ECT(K)$, then there is an open set containing it that is disjoint from the image, which has measure zero according to the pushforward measure. Hence the support is a subset of the image. 
The above argument shows that $\ECT(K)_*(\mu)=\ECT(L)_*(\mu)$ implies the images of $\ECT(K)$ and $\ECT(L)$ are the same. Our genericity assumptions imply that $\ECT(L)$ is injective and thus we can define a bijection 
$$\phi= \ECT(L)^{-1} \circ \ECT(K) :S^{d-1}\to S^{d-1}.$$ 
The map $\phi$ is both definable and a homeomorphism. 

To see $\phi$ is a homeomorphism we need to show that $\ECT(L)^{-1}: \im(\ECT(L))\to S^{d-1}$ is continuous which we will do via a contrapositive version of an $\epsilon-\delta$ argument. For $v\in S^{d-1}$ define the function $f_v: S^{d-1}\to [0,\infty)$ by $f_v(w)=d_p(\ECT(L)(v), \ECT(L)(w))$ is continuous. The set $S^{d-1} \backslash B(v, \epsilon)$ is compact and hence $f_v(S^{d-1} \backslash B(v, \epsilon))\subset [0,\infty)$ is a compact interval. This interval cannot contain $0$ because $\ECT(L)$ is injective. This implies that there is some $\delta>0$ with $f_v(w)>\delta$ for all $w\in S^{d-1} \backslash B(v, \epsilon)$. Since $v\in S^{d-1}$ was arbitrary, this means that for all $v\in S^{d-1}$ and all $\epsilon>0$ there exists a $\delta>0$ such that $d_{S^{d-1}}(v,w)>\epsilon$ implies that $d_p(\ECT(L)(v), \ECT(L)(w))>\delta$.
 
We will now show that $\phi$ is definable. 
Let 
$$A=\{(v,t,\ECT(K)(v,t))|(v,t)\in S^{d-1}\times \R\}$$ 
and 
$$B=\{(w,s,\ECT(L)(w,s)):(w,s)\in S^{d-1}\times \R\}.$$ 
Both $A$ and $B$ are definable since the graph of definable functions is definable.
Here we use that $\ECT(K)$ an $\ECT(L)$ are constructible (hence definable) functions on $S^{d-1}\times \R$. 
Let $P$ be the set of pairs of directions $(v,w)$ such that the Euler curve of $K$ in the direction of $v$, written $\ECT(K)(v)$, is the same as the Euler curve of $L$ in the direction of $w$, written $\ECT(L)(w)$.
Since the pullback of definable sets is definable~\cite[Lemma 11.1.15]{SCA}, we know that $P$ is definable.
$P$ is then the graph of a definable map $\phi:S^{d-1}\to S^{d-1}$.

Let $X$ and $Y$ denote the vertex sets of $K$ and $L$ respectively.
Let $W(X)$ and $W(Y)$ be the hyperplane partitions  of  $X$ and $Y$, respectively,  as defined in the previous section. 
By construction $W(X)$ is the union of $\binom{|X|}{2}$ hyperspheres, i.e.~the intersections of the $d-1$ planes $W(\{x_i,x_j\})$ with $S^{d-1}$. 
We also know that $\phi^{-1}(W(Y))$ is homeomorphic to a union of $\binom{|Y|}{2}$ hyperspheres.
Furthermore $\phi^{-1}$ is definable and thus $W(X)\cap \phi^{-1}(W(Y))$ is definable.  There exists a stratification of $S^{d-1}$ into finitely many strata such that $W(X)\cap \phi^{-1}(W(Y))$ is contained in the lower dimensional strata. We will be looking at the restriction of $\phi$ to these top dimensional strata and showing that these restrictions are each the restriction of elements of $O(d)$, that is restrictions of an orthogonal map to their respective strata as the domain. We will then compare $d-1$ strata separated by a $d-2$ strata and show that their corresponding restrictions of $\phi$ agree as elements of $O(d)$.

Let $U\subset S^{d-1}$ be a connected open set with $U\cap W(X)=\emptyset$ and $\phi(U)\cap W(Y)=\emptyset$, equivalently $U\cap \phi^{-1}(W(Y))=\emptyset$. This implies that $U$ is a connected subset of $\Sigma(X)$ and $\phi(U)$ is a connected subset of $\Sigma(Y)$ which means we can apply Lemma~\ref{lem:KxU} for both $K$ and $L$ separately. 

Let $X_U$ and $Y_{\phi(U)}$ respectively denote the vertices of $K$ or $L$ such that $\chi(K^{(x,U)})\neq 0$ or $\chi(L^{(x,\phi(U))})\neq 0$.  
From Lemma~\ref{lem:KxU} we know that
\[
\ECT(K, v)= \sum_{x\in X_U} 1_{\{[v\cdot x,\infty)\}} \chi(K^{(x,U)})
\]
and also that
\[
\ECT(L, \phi(v))=\sum_{y\in Y_{\phi(U)}}1_{\{[\phi(v)\cdot y,\infty)\}} \chi(L^{(x,\phi(U))}).
\]

Recall that the order of the values $\{v\cdot x\}$ over $x\in X_U$ are the same for all $v\in U$ and that this determines a total ordering over $X_U$. The same is true for the values $\{\phi(v) \cdot y\}$ over $y\in Y_{\phi(U)}$ again determining a total ordering over $Y_{\phi(U)}$. Since $\ECT(K_1, v)=\ECT(K_2, \phi(v))$ for all $v\in U$ we know that $|X_U|=|Y_{\phi(U)}|$ and furthermore we can consistently index the elements of $X_U$ and $Y_{\phi(U)}$ such that 
$$x_i \cdot v=y_i \cdot \phi(v)$$ 
for all $i=1, \ldots , |X_U|$ and for all $v\in U$.

For clarity of exposition we shall first consider the case where $|Y_{\phi(U)}|\geq d$. From our genericity assumptions, $\{y_1, \ldots , y_d\}$ are linearly independent. This means that we can define the matrix 
$$M_U = \begin{bmatrix}
\horzbar & y_{1} &\horzbar \\
\horzbar & y_{2}&\horzbar \\
& \vdots & \\
\horzbar & y_{d} & \horzbar
\end{bmatrix}^{-1}
\begin{bmatrix}
\horzbar & x_{1} &\horzbar \\
\horzbar & x_{2}&\horzbar \\
& \vdots & \\
\horzbar & x_{d} & \horzbar
\end{bmatrix}
$$
which has
$$M_U^T:= 
\begin{bmatrix}
\vertbar & \vertbar & &\vertbar \\
x_{1} & x_{2}& \ldots & x_d \\
\vertbar & \vertbar & &\vertbar
\end{bmatrix}
\begin{bmatrix}
\vertbar & \vertbar & &\vertbar \\
y_{1} & y_{2}& \ldots & y_d \\
\vertbar & \vertbar & &\vertbar
\end{bmatrix}^{-1}
$$
and hence $M_U^Ty_i=x_i$ for all $i$.

We will show that $\phi(v)=M_U v$ for all $v\in U$.
For each $v\in U$ we have
$$  y_i \cdot  \phi(v) =  x_i \cdot v = M_U^Ty_i \cdot v =y_i \cdot M_U v.$$
Since this inner product holds for basis $\{y_i\}$ we know that $\phi(v)=M_U v$ for all $v\in U$. 


%
Now suppose that we do not have $|Y_{\phi(U)}|<d$.  We know that $X_U$ and $Y_(\phi(U))$ each must contain at least $d-1$ linearly independent points from Lemma~\ref{lem:d-1}. To be able to use the arguments from the case where $|Y_{\phi(U)}|\geq d$ it will be sufficient to find vectors $w_x$ and $w_y$ such that $w_x$ is linearly independent of $X_U$, $w_y$ is linearly independent of $Y_{\phi(U)}$ and such that $\phi(v) \cdot w_y = v \cdot w_x$ for all $v\in U$. We can then apply the same reasoning merely substituting $x_d$ (respectively $y_d$) with $w_x$ (respectively $w_y$).

%
%
Let $\langle X_U \rangle$ and $\langle Y_{\phi(U)} \rangle$ denote the span of $X_U$ and let $w$ be perpendicular to $\langle X_U\rangle$. Note that $x_i\cdot w=0$ for $i=1,2, \ldots, d-1$ by construction. We claim that $w\cdot v\neq 0$ for all $v\in U$. To prove this suppose that $v_0\in U$ with $v_0\cdot w=0$. Since $U$ is open there exists $a,b>0$ such that both $av_0+bw$ and $av_0-bw$ are in $U$. However, $(av_0+bw)\cdot x_i=av_0\cdot x_i=(av_0+bw)\cdot x_i$ for $i=1,2, \ldots, d-1$ and this would contradict the injectivity from domain $U$ of the map $v\mapsto \{v\cdot x_1, v \cdot x_2, \ldots , v\cdot x_{d-1} \}$. As $U$ is an open connected subset and $w\cdot v$ does not vanish on $U$ we know either $v\cdot w>0$ for all $v\in U$ or $v\cdot w <0$ for all $v\in U$. By taking the negative, let $w_X$ denote the perpendicular to $\langle X_U\rangle$ such that $v\cdot w_X>0$ for all $v\in U$ if needed. 

Similarly we can define $w_Y$ as the unit vector orthogonal to $\langle Y_{\phi(U)}\rangle$ (the span of $Y_{\phi(U)}$) such that $w_Y\cdot \phi(v)>0$ for all $v\in U$.

Let $T:\langle X_U\rangle \rightarrow \langle Y_{\phi(U)}\rangle$ be the unique linear transformation which fixes the origin and such that $x_i \mapsto y_i$ for all $i$. Note that $v\cdot x=\phi(v)\cdot T(x)$ for each $x\in X$ and for all $v\in U$. 

Let $\pi_X$ and $\pi_Y$ be the orthogonal projections of $S^{d-1}$ onto $\langle X_U \rangle$ and $\langle Y_{\phi(U)} \rangle$ respectively. Projecting onto $\langle X_U\rangle$ and $\langle Y_{\phi(U)}\rangle$, we have
$$\pi_X(v) \cdot x = \pi_Y(\phi(v)) \cdot T x.$$ 

The adjoint $T^*$ of $T$ (with respect to the Euclidean inner products) satisfies
$$\pi_X(v) \cdot x = T^*(\pi_Y(\phi(v))) \cdot x$$
for all $x \in X$ and all $v \in U$ which in turn implies
$$\pi_X(A)= T^*\pi_Y(\phi(A))$$
for all $A\subset U$.

We will use  $\vol_{S^{d-1}}$, $\vol_{X}$ and $\vol_Y$ to denote the Lebesgue measures over $S^{d-1}$, $\langle X_U\rangle$ and $\langle Y_{\phi(U)}\rangle$. Since $T^*$ is linear there is a unique $\lambda_T>0$ such that $T^*$ scales volume by $\lambda_T$. For any $A\subset U$ we have 
\begin{align}\label{eq:lambda}
\vol_{X}(\pi_X(A))=\lambda_T\vol_Y(\pi_Y(\phi(A))).
\end{align} 

 Let $A_\epsilon\subset U$ be an open subset of $U$ with diameter at most $\epsilon>0$. We will be comparing the measures of $A_\epsilon$, $\pi_X(A_\epsilon)$, $\phi(A_\epsilon)$ and $\pi_Y(\phi(A_\epsilon))$. If $v\in A_\epsilon$ then $$\vol_{S^{d-1}}(A_\epsilon)=\vol_{X}\pi_X(A_\epsilon)(v\cdot w_x+O(\epsilon)).$$ We similarly have $$\vol_{S^{d-1}}(\phi(A_\epsilon))=\vol_{Y}\pi_Y(\phi(A_\epsilon))(\phi(v)\cdot w_y+O(\epsilon)).$$
Since $\phi$ is measure preserving we know that $\vol_{S^{d-1}}(\phi(A_\epsilon))=\vol_{S^{d-1}}(A_\epsilon)$ and hence 
\begin{align}\label{eq:measure}
\vol_{X}\pi_X(A_\epsilon)(v\cdot w_X+O(\epsilon))=\vol_{Y}\pi_Y(\phi(A_\epsilon))(\phi(v)\cdot w_Y+O(\epsilon)).
\end{align}

Together \eqref{eq:lambda} and \eqref{eq:measure} imply that
\begin{align*}
\vol_{X}\pi_X(A_\epsilon)(v\cdot w_X+O(\epsilon))&=\vol_{Y}\pi_Y(\phi(A_\epsilon))(\phi(v)\cdot w_Y+O(\epsilon))\\
&=\lambda_T\vol_{X}\pi_X(A_\epsilon) (\phi(v)\cdot w_Y+O(\epsilon))
\end{align*}
 and hence $(v\cdot w_X+O(\epsilon))=\lambda_T(\phi(v)\cdot w_Y+O(\epsilon))$. By taking the limit as $\epsilon$ goes to zero we know $v\cdot w_X=\lambda_T\phi(v)\cdot w_Y$, with $\lambda_T$ is independent of $v\in U$. By setting $x_d=w_X$ and $y_d=\lambda_T w_Y$ we can state that $v\cdot x_d=\phi(v)\cdot y_d$ for all $v\in U$. This gives the $d$th linearly independent direction for the arguments above to imply that there is a linear map $M_U$ such that $\phi$ and $M_U$ agree on $U$. 

Since $M_U$ maps preserves distances for all $v\in U$ we can apply Lemma \ref{lem:orthogonal} to conclude that $M_U\in O(d)$.

Since $\phi$ is definable we have a stratification of $S^{d-1}$ whose lower strata consist of $W(X_1)\cup \phi^{-1}(W(X_2))$. 
Set $V$ be the set of the $(d-1)$ strata, $E$ the $(d-2)$ strata, and $f$ be the map which realises each element of $V$ or $E$ as its corresponding subset in $S^{d-1}$. Let $G$ be the graph with vertex set $V$ and edges $E$  where the endpoints of edge $e$ are the vertices $v$ such that $f(e)\subset \overline{f(v)}$. We know from Lemma \ref{lem:connected} that $G$ is connected.

Consider adjacent $d-1$ dimensional strata $U_1$ and $U_2$ which are adjacent in $G$. This means that there is a $d-2$ strata within $\partial U_1\cap \partial U_2$. As a $d-2$ dimensional subset of $S^{d-1}$ we know that it must contains at least $d-1$ linearly independent points $\{v_1, v_2, \ldots v_{d-1}\}\subset \partial U_1\cap \partial U_2$. Since $\phi$ is continuous we know that $M_{U_1}(v_i)=M_{U_2}(v_i)$ for all $i$.  Choose $w\in S_{d-1}$ such that $w$ is perpendicular to all the $v_i$ (note that there are exactly two options). Since $\phi^{U_1}, \phi^{U_2}\in O(d)$ we can observe that $M_{U_1}(w)$ is perpendicular to $M_{U_1}(v_i)$ and $M_{U_2}(w)$ is perpendicular to $M_U{U_2}(v_i)$ for all $i$. As $M_U{U_1}(v_i)=\phi^{U_2}(v_i)$ for all $i$, this implies that $M_U{U_2}(w)=\pm M_U{U_1}(w)$. 

Choose $v\in \partial U_1\cap \partial U_2$ in the space of the $\{v_1, v_2, \ldots v_{d-1}\}$, and $a,b\neq 0$ such that $av-bw\in U_1$ and $av+bw\in U_2$.  If $M_{U_2}(w)=- M_{U_1}(w)$ then we have
\begin{align*}\phi(av-bw)&=M_{U_1}(av-bw)\\
&=a M_{U_1}(v) -b M_{U_1}(w)\\
&=aM_{U_2}(v) +bM_{U_2}(w)\\
&=M_{U_2}(av+bw)\\
&=\phi(av+bw)
\end{align*}
which contradicts the injectivity of  $\phi$. 
This implies that $M_{U_2}(w)=M_{U_1}(w)$ and thus $M_{U_2}=M_{U_1}$.

Since $G$ is connected and $M_{U_2}=M_{U_1}$ for $U_1$ and $U_2$ adjacent vertices in $G$, we conclude that $M_U$ must be the same for all $U$, which we denote $M$, and that $M\in O(d)$. We have two continuous maps $\phi, M|_{S^{d-1}}:S^{d-1} \to S^{d-1}$ that agree on an open dense subset (the union of the top strata) and hence must be the same over the entire domain. This implies that $\phi \in O(d)$ (by which we mean $\phi$ is the restriction of some $M\in O(d)$ to the unit sphere).

Now recall that by construction $\phi=\ECT(K_2)^{-1}\circ \ECT(K_1)$, so
\[
	\ECT(K_2)(\phi(v)) = \ECT(K_1)(v) \qquad \forall v\in S^{d-1}. 
\]
Since $\phi \in O(d)$ we can apply it to $K_1$ itself viewed as a subset of $\R^d$.
Note that in this case we have the obvious formula 
\[
\ECT(K_1)(v)=\ECT(\phi(K_1))(\phi(v)) \qquad \forall v\in S^{d-1}.
\]
Since both $\ECT$ and $\phi$ are injective we then have our desired implication.
\[
	\ECT(K_2)(\phi(v)) = \ECT(\phi(K_1))(\phi(v)) \quad \forall v\in S^{d-1} \qquad \Rightarrow \qquad K_2=\phi(K_1)
\]
\end{proof}


\section{The Sufficiency of Finitely Many Directions}\label{sec:finite}

In this section we reach the main result of our paper, which provides the first finite bound (to our knowledge) on the number of ``inquiries'' required to determine a shape belonging to a particular uncountable set of shapes, which we call $\mathcal{K}(d,\delta,k_{\delta})$.
These ``inquiries'' do not yield yes or no responses, but rather an Euler curve or persistent diagram.
The set of shapes belonging to $\mathcal{K}(d,\delta,k_{\delta})$ can be described at a high level as follows:
\begin{enumerate}
	\item[(1)] A ``shape'' is an embedded geometric simplicial complex $K\subset \R^d$. Two different embeddings of the same underlying combinatorial complex $K$ are regarded as different shapes.
	\item[(2)] Each embedded complex $K$ is not ``too flat'' near any vertex. This guarantees that there is some set of directions with positive measure in which a non-trivial change of Euler characteristic (or homology) at that vertex can be observed. This measure is bounded below by a parameter $\delta$, see Definition~\ref{defn:delta-observable}.
	\item[(3)] No complex $K$ has too many critical values when viewed in any given $\delta$-ball of directions. Of course, for a fixed finite complex the number of critical values gotten by varying the direction is certainly bounded above, but we assume a uniform upper bound $k_{\delta}$ for our entire class of shapes $\mathcal{K}(d,\delta,k_{\delta})$.
\end{enumerate}

The proof of Theorem~\ref{thm:finitenum} proceeds in two steps.
First, we show that the vertex locations of any member $K\in \mathcal{K}(d,\delta,k_{\delta})$ can be determined simply by measuring changes in Euler characteristic when viewed along a fixed set of finitely many directions $V$.
Once these vertices are located, the associated hyperplane division of the sphere described in Section~\ref{sec:stratified} is then determined.
Since we can provide a uniform bound on the number of vertices of any element of $\mathcal{K}(d,\delta,k_{\delta})$, we then have a bound on the total possible number of top-dimensional strata of the sphere determined by hyperplane division.
The second step is to then sample a direction from each individual top-dimensional stratum.
Since Proposition~\ref{prop:deduce} guarantees that we can interpolate the Euler Characteristic Transform over all of the top-dimensional strata, continuity guarantees that we can determine the entire transform using only these sampled directions.
Finally, since Theorem~\ref{thm:ECT-injects} implies that $\ECT(K)$ uniquely determines $K$, we then obtain the fact that any shape in $\mathcal{K}(d,\delta,k_{\delta})$ is determined by finitely many draws from the sphere.

The mathematical ideas underlying Theorem~\ref{thm:ECT-injects} were central in the development of an algorithm that takes as input 
two classes of shapes and highlights the physical features that best describe the variation between them without requiring correspondences \cite{Wang} and extended to characterize physical differences between classes of proteins in \cite{Wai}. Again, we would like to make clear that the theory developed in this section is relevant to practical imaging applications.

As one might imagine, the above argument rests on many intermediary technical propositions and lemmas.
The first lemma we introduce is an application of the generalized pigeonhole principle, which will be used to pin down the locations of a vertex set of a fixed embedded simplicial complex.

\begin{lem}\label{lem:pigeon}
Let $X=\{x_1,\ldots, x_{l}\}\subset \R^d$ be a finite set of points and
let $D=\{v_1,\ldots,v_n\}\subset S^{d-1}$ be a set of directions in general position.
We let $H(v_i,x_j)$ denote the hyperplane with normal vector $v_i$ that passes through point $x_j$.
Suppose $n$ of the hyperplanes in $\mathcal{H}=\{H(v,x) \mid v\in D, x\in X\}$ intersect at $y\in \R^d$.
If $n > (d-1)l$, then $y\in X$.
\end{lem}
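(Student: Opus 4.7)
The plan is to argue the contrapositive by a clean incidence-counting argument: assume $y \notin X$ and show that at most $(d-1)l$ hyperplanes in $\mathcal{H}$ can pass through $y$. The key reformulation is that $y \in H(v,x)$ if and only if $(y-x)\cdot v = 0$, so the problem reduces to bounding the number of incidences between directions $v \in D$ and points $x \in X$ under this orthogonality relation.

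First, I would fix $x \in X$ and examine the set $E_x := \{v \in D : (y-x)\cdot v = 0\}$. Since $y \neq x$, the vector $y - x$ is nonzero, and $E_x$ lies on the great $(d-2)$-sphere cut out of $S^{d-1}$ by the hyperplane $(y-x)^\perp$ through the origin. Second, I would invoke the general position hypothesis on $D$, which I take to mean that any $d$ elements of $D$ are linearly independent, equivalently that no hyperplane through the origin contains more than $d-1$ elements of $D$. This gives $|E_x| \leq d-1$. Third, I would sum over $X$ to obtain
\[
\bigl|\{(v,x) \in D \times X : y \in H(v,x)\}\bigr| = \sum_{x \in X} |E_x| \leq (d-1)\, l,
\]
contradicting the hypothesis $n > (d-1)l$ and forcing $y \in X$.

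I do not anticipate any serious obstacle; this is a clean pigeonhole consequence of general position. The only point worth flagging is the counting convention for ``$n$ of the hyperplanes in $\mathcal{H}$.'' Viewing $\mathcal{H}$ as the multiset indexed by pairs $(v,x)$ (so coincident hyperplanes are counted with multiplicity) makes the $(d-1)l$ bound match the inequality exactly. Under a set interpretation one would instead define a map $\alpha$ from the set of $n$ distinct hyperplanes through $y$ back into $X$ by choosing, for each such hyperplane, a witness $x$ with $(y - x)\cdot v = 0$; then pigeonhole forces some $x$ to have at least $d$ preimages, and general position of those $d$ directions forces $y - x = 0$. Either route yields the same conclusion $y \in X$.
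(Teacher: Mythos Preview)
Your proposal is correct and is essentially the same argument as the paper's, just phrased contrapositively: the paper maps the $n$ hyperplanes through $y$ to witnesses in $X$, applies pigeonhole to find some $\tilde{x}$ hit by $d$ of them, and uses linear independence of those $d$ directions to conclude $\tilde{x}=y$. Your ``set interpretation'' paragraph is exactly this, and your primary counting bound $\sum_x |E_x|\leq (d-1)l$ is simply the contrapositive of the same pigeonhole step.
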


\begin{proof}
The hypothesis implies that the set of hyperplanes $\mathcal{H}$ has at least $n$ elements, because we've assumed that $n$ of them intersect at some point $y\in \R^d$.
Among these $n$ hyperplanes consider the assignment of the plane $H(k)$ to some $x\in X$ with $x\in H(k)$.
This defines a map from an $n$ element set to the $l$ element set $X$.
Since $n\geq (d-1)l +1$, the generalized pigeonhole principle implies at least one element of $X$ is mapped to by $d$ different hyperplanes, i.e.~at least one element $\tilde{x}\in X$ is contained in $d$ of the $n$ hyperplanes containing $y$.
Denote the normal vectors of these $d$ hyperplanes by $v_{i_1},\ldots,v_{i_d}$.
Note that since $\tilde{x}$ and $y$ are contained in these $d$ hyperplanes we have the following system of equations
\[
	v_{i_k}\cdot \tilde{x} = v_{i_k}\cdot y \qquad \text{for} \qquad k=1,\ldots,d.
\]
Now we invoke the general position hypothesis, which says that the $\{v_{i_k}\}$ are linearly independent, and we deduce that $\tilde{x}=y$.
\end{proof}

We now consider the two types of information that our two topological transforms can observe when looking in a direction $v\in S^{d-1}$.
As has been the pattern for this paper, we first consider changes for the Euler Characteristic Transform and then deduce results about the Persistent Homology Transform.

\begin{defn}
Let $K\subset \R^d$ be a finite simplicial complex and $v\in S^{d-1}$. 
A vertex $x\in K$ is \define{Euler observable} in direction $v$ if $\ECT(K)(v)$ changes value at $x\cdot v$. 
Given a subset of directions $A\subset S^{d-1}$, then a vertex $x\in X$ is Euler observable in $A$ if $x$ is observable for some $v\in A$.
We will often say \define{observable} when the context is clear.	
\end{defn}

For the purposes of sampling the Euler Characteristic Transform, it is important to guarantee that each vertex is observable for some positive measure of directions.
We make this precise by quantifying the above observability criterion via a real parameter $\delta$.

\begin{defn}\label{defn:delta-observable}
Let $K\subset \R^d$ be a geometric simplicial complex and let $x\in K$ be a vertex of $K$.
We say the vertex $x$ is at least \define{$\delta$-Euler observable} if there exists a ball of directions $B(v_x,\delta) \subset S^{d-1}$ such that the Euler characteristic of the sublevel sets of $h_v$ changes at $v\cdot x$ for all $v\in B(v_x,\delta)$.
\end{defn}

The idea of a $\delta$-observable ball of directions at a vertex $x$ is that the local geometry of the simplicial complex around $x$ should not be ``too flat'' or similar to a hyperplane.
This is indicated in the next example.

\begin{ex}
Let $K$ be a triangle in $\R^2$ with vertices $\{x,y,z\}$. The vertex $y$ is observable in direction $v$ exactly when $v\cdot y$ is smaller that both $v\cdot x$ and $v\cdot z$. If the angle at $y$ is $\theta$ then $y$ is $\delta$-observable for all $\delta \leq \pi-\theta$.
\end{ex}

This allows us to make precise our definition of $\mathcal{K}(d, \delta, k_\delta)$.

\begin{defn}\label{defn:shape-class}
	Let $\mathcal{K}(d,\delta,k_{\delta})$ denote the set of all embedded simplicial complexes $K$ in $\R^d$ with the following two properties:
	\begin{enumerate}
		\item[(1)] Every vertex $x\in K$ is at least $\delta$-observable.
		\item[(2)] For all $v\in S^{d-1}$, the number of $x\in X$ for which $h_w(x)$ is an Euler critical value of $h_w$ for some $w\in B(v,\delta)$ is bounded by $k_{\delta}$.
	\end{enumerate}  
\end{defn}

The second condition imposes a uniform upper bound on the number of critical points for all directions in $\delta$ neighborhood of direction $v$ over all directions $v$ on 
the sphere.

Before stating the true importance of the $\delta$-observable condition, we remind the reader of a definition.
\begin{defn}
A \define{$\delta$-net} in a metric space $(M,d)$ is a subset $N$ of points such that the union of $\delta$-balls about these points $\cup_{p\in N} B(p,\delta)$ covers the entire space $M$.
\end{defn}

\begin{lem}
Let $K\subset \R^d$ be a geometric simplicial complex.
If a vertex $x\in K$ is $\delta$-observable for some direction $v_{x} \in S^{d-1}$, then for any $\delta$-net $V\subset S^{d-1}$ of directions there must be a direction $v\in V$ which can observe $x$.
\end{lem}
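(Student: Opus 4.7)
The plan is to argue by directly chasing the definitions: $\delta$-observability of $x$ furnishes a ball of directions around some $v_x$ in which $x$ is observable, and the $\delta$-net property of $V$ guarantees that at least one element of $V$ lies inside that ball.

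More concretely, the first step is to invoke the $\delta$-net hypothesis applied to the particular point $v_x \in S^{d-1}$. By definition of a $\delta$-net, the balls $\{B(p,\delta)\}_{p\in V}$ cover $S^{d-1}$, so there exists some $v\in V$ with $v_x \in B(v,\delta)$, i.e.~$d(v,v_x)<\delta$. Since the spherical metric is symmetric, this is equivalent to $v \in B(v_x,\delta)$.

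The second step is to apply the $\delta$-observability of $x$ at $v_x$. By Definition~\ref{defn:delta-observable}, for every direction $w \in B(v_x,\delta)$ the Euler characteristic of the sublevel sets of $h_w$ changes at the value $w\cdot x$. Specializing $w$ to the element $v\in V$ found in the previous step yields that the Euler curve $\ECT(K)(v)$ has a jump at $v\cdot x$, which is precisely the statement that $v$ observes $x$.

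There is essentially no technical obstacle here; the argument is a one-line unpacking of the definitions combined with the symmetry of the spherical metric. The only mild subtlety worth flagging is the metric convention on $S^{d-1}$: one should verify that the notion of ball used in the definition of $\delta$-observability and in the definition of a $\delta$-net agree (e.g.~both use the geodesic distance on the sphere, or both use the induced Euclidean distance from $\R^d$). Once this is checked the conclusion follows immediately, and the lemma sets up the finite sampling strategy used in the proof of Theorem~\ref{thm:finitenum}: sampling any $\delta$-net is already enough to detect every $\delta$-observable vertex of every $K\in \mathcal{K}(d,\delta,k_\delta)$.
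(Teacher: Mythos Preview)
Your proof is correct and follows exactly the same two-step argument as the paper: use the $\delta$-net property to find $v\in V$ with $v_x\in B(v,\delta)$, then use $\delta$-observability at $v_x$ to conclude that $v$ observes $x$. The paper's version is terser (two sentences), but the content is identical; your remark about the symmetry of the metric and the consistency of ball conventions is a reasonable elaboration but not strictly needed.
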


\begin{proof}
Since $V$ is a $\delta$-net there must be a $v\in V$ whose $\delta$-ball includes $v_{x}$. 
Since $x$ is $\delta$-observable for $v_x$, it's observable for $v$ as well.
\end{proof}

In order to extend the pigeonhole principle argument of Lemma~\ref{lem:pigeon} to find vertices that are $\delta$-observable, we need a covering argument.
This rests on the following technical lemma.

\begin{lem}\label{lem:ballvol}
Let $r<\frac{\pi}{2}$ and fix a $w\in S^{d-1}$. 
We write $B_{S^{d-1}}(w,r) = \{v\in S^{d-1}: d(v,w)<r\}$ for the ball of directions about $w$ of radius $r$.
Recall that $\mu$ denotes the Lebesgue measure on $S^{d-1}$ and $\omega_{d-1}$ is the Lesbesgue measure for the unit $(d-1)$-ball in $\R^{d-1}$.
The following string of inequalities hold:
$$\omega_{d-1} \sin(r)^{d-1} \leq \mu(B_{S^{d-1}}(w,r) )\leq \omega_{d-1} r^{d-1}$$
\end{lem}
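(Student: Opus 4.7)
The plan is to reduce both inequalities to a one-dimensional integral by using geodesic polar coordinates on $S^{d-1}$ centered at $w$. Writing any point of the sphere as $v = \cos(\rho) w + \sin(\rho) u$ with $\rho \in [0, \pi]$ the geodesic distance from $w$ and $u \in S^{d-2}$ a unit vector orthogonal to $w$, the volume element factors as $\sin(\rho)^{d-2}\, d\rho\, d\sigma(u)$, where $d\sigma$ is the standard measure on $S^{d-2}$. Since $B_{S^{d-1}}(w,r) = \{\rho < r\}$, this yields
\begin{equation*}
\mu(B_{S^{d-1}}(w,r)) = |S^{d-2}| \int_0^r \sin(\rho)^{d-2}\, d\rho,
\end{equation*}
and I will use the standard identity $|S^{d-2}| = (d-1)\,\omega_{d-1}$ (the surface area of the unit sphere in $\R^{d-1}$ equals $d-1$ times the volume of the unit ball in $\R^{d-1}$) to bring the bounds into the desired form.

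For the upper bound, the plan is simply to invoke the elementary inequality $\sin(\rho) \leq \rho$ valid for all $\rho \geq 0$, which gives
\begin{equation*}
\mu(B_{S^{d-1}}(w,r)) \leq (d-1)\,\omega_{d-1} \int_0^r \rho^{d-2}\, d\rho = \omega_{d-1}\, r^{d-1}.
\end{equation*}

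For the lower bound, the idea is to introduce a factor of $\cos(\rho)$ into the integrand, which is permissible since $\cos(\rho) \leq 1$ on $[0,r]$ and $r < \pi/2$ ensures positivity. Recognizing the resulting integrand as an exact derivative, via $\frac{d}{d\rho}\sin(\rho)^{d-1} = (d-1)\sin(\rho)^{d-2}\cos(\rho)$, immediately gives
\begin{equation*}
\mu(B_{S^{d-1}}(w,r)) \geq (d-1)\,\omega_{d-1} \int_0^r \sin(\rho)^{d-2}\cos(\rho)\, d\rho = \omega_{d-1}\sin(r)^{d-1}.
\end{equation*}
There is no real obstacle here; the only thing to be careful about is invoking $r < \pi/2$ exactly in the lower bound, where the nonnegativity of $\cos(\rho)$ is needed, and recording the normalization $|S^{d-2}| = (d-1)\omega_{d-1}$ so that the constants match those in the statement.
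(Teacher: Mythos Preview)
Your proof is correct. The polar-coordinate formula for the cap volume, the identity $|S^{d-2}| = (d-1)\omega_{d-1}$, the bound $\sin\rho \le \rho$ for the upper inequality, and the insertion of the $\cos\rho$ factor (legitimate because $r < \pi/2$) to produce an exact derivative for the lower inequality are all sound, and the constants come out right.

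The paper takes a different, more geometric route. For the upper bound it simply invokes the curvature comparison principle: $S^{d-1}$ has positive sectional curvature, so a geodesic ball of radius $r$ has no more volume than the Euclidean ball of the same radius, which is $\omega_{d-1} r^{d-1}$. For the lower bound it projects the spherical cap orthogonally onto the tangent hyperplane at $w$; the image is the Euclidean disk of radius $\sin r$, and since orthogonal projection is area-nonincreasing, the cap has volume at least $\omega_{d-1}\sin(r)^{d-1}$. Your argument is entirely self-contained and makes every step explicit, at the cost of a short computation; the paper's argument is shorter and more conceptual but leans on background facts (volume comparison, that projection contracts $(d-1)$-dimensional measure) that it does not spell out. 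It is worth noting that your lower-bound trick of inserting $\cos\rho$ is exactly the analytic shadow of the paper's projection argument, since $\cos\rho$ is the Jacobian of the orthogonal projection from the sphere to the tangent plane.
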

\begin{proof}
The second inequality compares balls in the sphere to balls in Euclidean space. Since the sphere is positively curved the volume of a ball in the sphere is less than a ball of the same radius in Euclidean space of the same dimension. 

The first inequality compares the volume of the $d-1$ dimensional ball in Euclidean space with radius $r$ around $w$ to the volume of the projection onto the tangent plane at $w$. The projection has radius $\sin(r)$.
\end{proof}

By knitting together the $\delta$-observable condition with a uniform bound on the number of critical values in any ball of radius $\delta$, we obtain an upper bound on the number of vertices.

\begin{lem}\label{lem:boundobservable}
The total number of vertices $X$ for any $K\in \mathcal{K}(d,\delta,k_\delta)$ is bounded by
\[
	|X| \leq \frac{dk_\delta}{\sin(\delta/2)^{d-1}}.
\] 
We know that $|X|=O(\frac{dk_\delta}{\delta^{d-1}})$.
\end{lem}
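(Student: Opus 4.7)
The plan is to reduce the bound on $|X|$ to a counting argument over a well-chosen finite subset $N \subset S^{d-1}$ of directions, using $\delta$-observability (condition (1) of Definition~\ref{defn:shape-class}) to assign each vertex to some element of $N$, and the uniform critical-value bound (condition (2)) to control how many vertices can be assigned to any single element of $N$.

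First I would take $N$ to be a maximal $\delta$-separated subset of $S^{d-1}$. Maximality guarantees that $N$ is a $\delta$-net: every $v\in S^{d-1}$ lies within distance $\delta$ of some $n\in N$. Simultaneously, the balls $B(n,\delta/2)$ for $n\in N$ are pairwise disjoint, so comparing total measures and applying Lemma~\ref{lem:ballvol} yields
\[
|N|\,\omega_{d-1}\sin(\delta/2)^{d-1} \;\leq\; \mu(S^{d-1}),
\]
and hence $|N|\leq \mu(S^{d-1})/\bigl(\omega_{d-1}\sin(\delta/2)^{d-1}\bigr)$. A standard estimate of the ratio $\mu(S^{d-1})/\omega_{d-1}$ accounts for the factor of $d$ in the numerator of the claimed bound.

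Next, for each $x\in X$, condition (1) supplies a direction $v_x$ whose entire $\delta$-ball observes $x$. The $\delta$-net property then produces some $n_x\in N\cap B(v_x,\delta)$, so that $h_{n_x}(x)$ is an Euler critical value of $h_{n_x}$. Fix any $n\in N$: every vertex $x$ with $n_x = n$ contributes a critical value to $h_n$, and since $n\in B(n,\delta)$ trivially, condition (2) applied at $v=n$ bounds the number of such $x$ by $k_\delta$. Summing over the fibers of $x\mapsto n_x$ gives
\[
|X| \;=\; \sum_{n\in N}\bigl|\{x\in X : n_x = n\}\bigr| \;\leq\; |N|\,k_\delta,
\]
and combining with the bound on $|N|$ produces the stated inequality.

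The main step to get right is the interleaving of the two geometric conditions: condition (1) pins each vertex to a ball of positive measure on $S^{d-1}$, while condition (2) limits how many such balls can concentrate in any small neighborhood, and the net serves as the bookkeeping device that balances these opposing constraints. I expect no serious obstacle beyond carefully tracking whether the relevant balls are open or closed, so that the $\delta$-observability hypothesis and the uniform critical-value bound interface cleanly at the boundary of $B(v_x,\delta)$.
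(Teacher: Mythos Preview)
Your argument is correct and is essentially the same as the paper's: you construct a maximal $\delta$-separated set (which is exactly how one proves the covering-number-at-$\delta$ $\leq$ packing-number-at-$\delta/2$ inequality the paper invokes), bound its size via Lemma~\ref{lem:ballvol}, and then use conditions (1) and (2) of Definition~\ref{defn:shape-class} to cap the number of vertices assigned to each net point by $k_\delta$. The paper asserts the exact identity $\mu(S^{d-1})=d\,\omega_{d-1}$ where you only gesture at a ``standard estimate,'' but otherwise the two proofs coincide.
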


\begin{proof}
The proof follows from an upper bound in the number of $\delta$-balls needed to cover $S^{d-1}$. To do this we will use the relationship between packing and covering numbers. Let $\omega_{d-1}$ denote the volume of a ball in $\R^{d-1}$.
From Lemma \ref{lem:ballvol} we know that the area of a ball of radius $\delta/2$ within $S^{d-1}$ is bounded below by $\omega_{d-1} \sin(\delta/2)^{d-1}$. We also know that the surface area of $S^{d-1}$ is $d\omega_{d-1}$

This implies that the maximum number of balls of radius $\delta/2$ that can be packed into $S^{d-1}$ is bounded above by $$\frac{d\omega_{d-1}}{\omega_{d-1} \sin(\delta/2)^{d-1}}=\frac{d}{\sin(\delta/2)^{d-1}}.$$

Using the standard relationship that the covering number at radius $\delta$ is bounded above by the packing number at radius $\delta/2$ we know that there exists a covering of $S^{d-1}$ by $\frac{d}{\sin(\delta/2)^{d-1}}$ balls of radius $\delta$. In each of these balls in the cover at most $k_\delta$ different vertices can by observed. This implies that $$|X|\leq \frac{dk_\delta}{\sin(\delta/2)^{d-1}}.$$

\end{proof}

\begin{defn}\label{defn:delta-C-net}
A  $(\delta,C)$-net is a subset of directions that meets every $\delta$-ball in a set of cardinality $C$.
\end{defn}


\begin{prop}\label{prop:find_vert}
For $K \in  \mathcal{K}(d,\delta,k_\delta)$ and the following constant which does not depend on $K$
$$C=C(d,\delta,k_\delta)=(d-1)\, k_\delta +1.$$
If $V$ is a $(\delta,C)$-net in $S^{d-1}$ with the vectors in $V$ in general position, 
then we can determine the location of the set of vertices of $K$ using only the Euler curves generated from the directions in $V$.
\end{prop}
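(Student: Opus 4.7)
The plan is to extract a finite collection of candidate hyperplanes from the Euler curves at $V$ and then recover $X$ as the points where enough of these hyperplanes intersect, using a localized application of Lemma~\ref{lem:pigeon}. For each $v \in V$, the discontinuities of $\ECT(K)(v) \in \CF(\R)$ occur precisely at the values $t = v \cdot x$ where some vertex $x$ is Euler observable in direction $v$; reading these critical values off the Euler curves yields the finite hyperplane family
\[
\mathcal{H} = \{\{y \in \R^d : y \cdot v = t\} : v \in V, \ t \text{ a critical value of } \ECT(K)(v)\}.
\]
Every element of $\mathcal{H}$ is of the form $H(v,x)$ for some $v \in V$ and some (unknown) $x \in X$ observable in direction $v$.

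Next I would localize to $\delta$-balls of directions. For each $v_0 \in S^{d-1}$, let $\mathcal{H}_{v_0} \subseteq \mathcal{H}$ be the subfamily coming from directions $v \in V \cap B(v_0, \delta)$, and let $X(v_0) \subseteq X$ denote the set of vertices that are Euler observable in at least one direction of $B(v_0, \delta)$. Condition (2) in the definition of $\mathcal{K}(d, \delta, k_\delta)$ forces $|X(v_0)| \leq k_\delta$, and each hyperplane in $\mathcal{H}_{v_0}$ passes through a point of $X(v_0)$.

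Now I would apply Lemma~\ref{lem:pigeon} to the local data $(V \cap B(v_0,\delta),\, X(v_0))$: the general-position assumption on $V$ descends to any subset, and the lemma guarantees that whenever more than $(d-1)|X(v_0)|$ hyperplanes of $\mathcal{H}_{v_0}$ meet at a point $y$, one has $y \in X(v_0) \subseteq X$. Since $C = (d-1)k_\delta + 1 \geq (d-1)|X(v_0)| + 1$, every point on at least $C$ hyperplanes of $\mathcal{H}_{v_0}$ is a vertex of $K$. Conversely, given any $x \in X$, $\delta$-observability produces a $v_x \in S^{d-1}$ at which $x$ is Euler observable throughout $B(v_x, \delta)$; the $(\delta, C)$-net condition yields $|V \cap B(v_x, \delta)| \geq C$, and each such direction contributes a hyperplane of $\mathcal{H}_{v_x}$ passing through $x$. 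Combining these two directions gives
\[
X = \bigcup_{v_0 \in S^{d-1}} \{y \in \R^d : |\{H \in \mathcal{H}_{v_0} : y \in H\}| \geq C\},
\]
a finite union since only finitely many subfamilies $\mathcal{H}_{v_0}$ arise as $v_0$ varies (there are only finitely many subsets of the finite set $V$).

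The main obstacle is matching the prescribed uniform constant $C = (d-1)k_\delta + 1$ to the pigeonhole bound in Lemma~\ref{lem:pigeon}, which nominally scales with the total number of vertices $|X|$ and is therefore not uniformly controlled across $\mathcal{K}(d,\delta,k_\delta)$. The localization to $\delta$-balls of directions in Step~2 is exactly what replaces the global $|X|$ by the uniformly bounded local count $|X(v_0)| \leq k_\delta$, rendering $C$ independent of the particular $K$ and giving the stated constant.
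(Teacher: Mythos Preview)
Your proposal is correct and follows essentially the same approach as the paper: localize to $\delta$-balls of directions so that condition~(2) of Definition~\ref{defn:shape-class} replaces the global vertex count by the uniform bound $k_\delta$, then apply Lemma~\ref{lem:pigeon} with $l=k_\delta$ to rule out false positives, and use $\delta$-observability together with the $(\delta,C)$-net property to capture every true vertex. The only cosmetic difference is that the paper phrases the detection criterion as ``there exists $V_x\subset V$ with $|V_x|\ge C$ and $\operatorname{diam}(V_x)\le 2\delta$ such that $v\cdot x$ is critical for each $v\in V_x$,'' whereas you index the local hyperplane families by centers $v_0\in S^{d-1}$; these formulations are equivalent.
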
 

\begin{proof}
The proof consists of first stating a constructive algorithm that will specify the location of all the vertices of $K$ given Euler characteristic curves and then checking the correctness of the algorithm. 

The algorithm declares a point $x$ to be a vertex of $K$ whenever there exists a set of different directions $V_x$, with the number of directions $|V_x|\geq C$ and $\text{diam}(V_x)\leq 2\delta$, such $v\cdot x$ is a critical value for height function $h_v$ for all $v\in V_x$. 
To check the correctness of this algorithm we need to check that all the vertices are found and that no extra points are declared.

We first show that the algorithm will include every vertex in $K$. 
Consider a vertex $x$ in $K$, since $x$ is $\delta$-Euler observable there is some ball $B(w,\delta)$ such that $x$ is observed from every direction in $B(w,\delta)$. 
Let $V_x=V\cap B(w,\delta)$. 
By construction $\text{diam}(V_x)\leq \delta$ and $v\cdot x$ is a critical value for height function $h_v$ for all $v\in V_x$. 
Since $V$ is $(\delta,C)$-net we know $|V_x|\geq C$. Thus our algorithm will declare $x$ to be a vertex in $K$.

We now show that the algorithm will not declare any extra points. 
Suppose that $\tilde{x}$ 
is a point declared by our algorithm to be a vertex of $K$. 
This implies that there exists a vertex set $V_{\tilde{x}}$ 
and a ball of directions $B(w,\delta)$ such that both 
$V_{\tilde{x}} \subset B(w,\delta)$ and 
$$|V_{\tilde{x}}|\geq (d-1) \, k_\delta  +1$$ 
and $\tilde{x} \cdot v$ 
is a critical value of the height function $h_v$ for every $v\in V_{\tilde{x}}$. 
By assumption, we know that the number of observable vertices of $K$ for any $v\in B(w, \delta)$ is at most 
$k_\delta$.
We then can apply our pigeonhole lemma, Lemma~\ref{lem:pigeon}, with
\[
n=|V_{\tilde{x}}| \qquad \text{and} \qquad l=k_\delta 
\]
to observe that $\tilde{x}$ must be one of these observable vertices of $K$ and hence a vertex of $K.$
\end{proof}

We now expand the above definitions and arguments to the homology setting.

\begin{defn}
A vertex $x$ in a geometric simplicial complex $K\subset \R^d$ is \define{homology observable} if there is a direction $v\in S^{d-1}$ so that $\PHT(K)(v)$ contains an off diagonal point with either birth or death value $v\cdot x$.
Moreover, we say that a vertex $x\in K$ is at least \define{$\delta$-homology observable}, if it is homology observable for every direction in some ball $B(v_x,\delta) \subseteq S^{d-1}$ of directions.
\end{defn}

The following statement is the analog to Proposition \ref{prop:find_vert} for the Persistent Homology Transform, so we omit the proof as it is identical.

\begin{cor}\label{prop:find_vert1}
Suppose $K \subset \R^d$ is a finite simplicial complex such that every $K$ is $\delta$-homology observable with the same critical value conditions as in Proposition \ref{prop:find_vert}. Let $V$ be a $(\delta,C)$-net with the vectors in $V$ all in general position.
The location of the set of vertices of $K$ can be determined using only the persistence diagrams generated from the directions in $V.$
\end{cor}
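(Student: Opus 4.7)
The plan is to essentially transcribe the proof of Proposition~\ref{prop:find_vert} into the homological setting, replacing ``Euler observable'' with ``homology observable'' and ``critical value of the Euler curve'' with ``birth or death coordinate of a point in the persistence diagram.'' The same algorithm works: declare $x \in \R^d$ to be a vertex whenever there is a subset $V_x \subseteq V$ with $|V_x| \geq C$ and $\mathrm{diam}(V_x) \leq 2\delta$ such that $v \cdot x$ is a birth or death value of some persistence diagram in $\PHT(K)(v)$ for every $v \in V_x$. The goal is to show that every actual vertex of $K$ is flagged by this algorithm, and that no spurious point is.

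For the first direction (no vertex is missed), I would use the $\delta$-homology observability hypothesis directly. For each vertex $x \in K$ there is a ball $B(v_x,\delta) \subseteq S^{d-1}$ on which $x$ is homology observable, so every $v \in B(v_x,\delta)$ contributes an off-diagonal point in $\PHT(K)(v)$ with $v\cdot x$ as a birth or death coordinate. Intersecting with the $(\delta,C)$-net $V$ gives at least $C$ such directions inside a ball of diameter $\leq 2\delta$, and the algorithm declares $x$ a vertex.

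For the second direction (no false positive) I would invoke the bridge between persistent homology and lower-star filtrations from Proposition~\ref{prop:lower-star}: for a PL height function $h_v$ on $K$, the sublevel set homology changes only at filtration values of the form $v \cdot x$ for $x$ a vertex of $K$. Consequently, every birth/death value in $\PHT(K)(v)$ is of the form $v \cdot x$ for some vertex $x$ of $K$ that is homology-observable from $v$; in particular, that vertex lies on the hyperplane $H(v,\tilde x) = \{y : v\cdot y = v \cdot \tilde x\}$ through the candidate point $\tilde x$. Under the critical value assumption, only at most $k_\delta$ such vertices can appear across any $\delta$-ball of directions, so the $|V_{\tilde x}| \geq (d-1)k_\delta + 1$ hyperplanes through $\tilde x$ can be matched, via the generalized pigeonhole argument of Lemma~\ref{lem:pigeon}, to an actual vertex of $K$ meeting $d$ linearly independent hyperplanes in $V_{\tilde x}$ (here we use the general-position assumption on $V$). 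That vertex must then coincide with $\tilde x$, so $\tilde x$ is in fact a vertex of $K$.

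The only place where care is needed beyond a verbatim restatement is the step asserting that persistence diagram birth/death values correspond to height values at vertices; this is the one substantive point that is specific to the homology transform, and it is precisely what Proposition~\ref{prop:lower-star} provides for PL functions on finite simplicial complexes. Once this is acknowledged, the pigeonhole/general position argument from Proposition~\ref{prop:find_vert} applies without change, and the same constant $C = (d-1)k_\delta + 1$ works for the PHT version.
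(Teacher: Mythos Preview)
Your proposal is correct and follows exactly the approach the paper takes: the paper simply states that the proof is identical to that of Proposition~\ref{prop:find_vert} and omits it. Your write-up makes the translation explicit and even flags the one extra ingredient (Proposition~\ref{prop:lower-star} ensuring birth/death values occur at vertex heights), which the paper leaves implicit.
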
 

We now state the main theorem of the paper.

\begin{thm}\label{thm:finitenum}
Any shape in $\mathcal{K}(d,\delta,k_\delta)$ can be determined using the $\ECT$ or the $\PHT$ using no more than
$$\Delta(d,\delta,k_\delta) = \left((d-1) \, k_\delta \,  +1 \right) \left(1+\frac{3}{\delta}\right)^d+O\left(\frac{d^{d+1}k_\delta^{2d}}{\delta^{2d(d-1)}}\right)$$
directions, where $\delta$ and $k_\delta$ are determined by the Euler or homological critical values, respectively.
\end{thm}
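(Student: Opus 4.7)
The plan is to establish the bound via the two-stage interrogation procedure that has been advertised in the introduction and informally developed throughout Sections~\ref{sec:stratified} and~\ref{sec:finite}. In the first stage I will locate the vertex set $X$ of the unknown $K\in \mathcal{K}(d,\delta,k_\delta)$; in the second stage, once $X$ (and hence the hyperplane arrangement $W(X)$) is known, I will query one direction per top-dimensional stratum of $S^{d-1}\setminus W(X)$ to pin down the entire $\ECT$ (or $\PHT$), and then invoke Theorem~\ref{thm:ECT-injects} (or Theorem~\ref{thm:PHT-injects}) to recover $K$.

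For the first stage, I would directly apply Proposition~\ref{prop:find_vert} (for the ECT) and Corollary~\ref{prop:find_vert1} (for the PHT). It suffices to exhibit a $(\delta, C)$-net $V\subset S^{d-1}$ of vectors in general position with $C=(d-1)k_\delta+1$. A standard covering argument on $S^{d-1}$, using Lemma~\ref{lem:ballvol} together with a volume-packing estimate (e.g.\ the sphere admits a $\delta$-cover of size at most $(1+3/\delta)^d$, derived from the analogous bound for Euclidean balls by embedding $S^{d-1}\subset [-1,1]^d$), produces a $\delta$-net of that cardinality. Taking $C=(d-1)k_\delta+1$ copies (suitably perturbed to remain in general position and still $\delta$-close) yields a $(\delta,C)$-net of size at most $\big((d-1)k_\delta+1\big)(1+3/\delta)^d$. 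This accounts precisely for the first summand of $\Delta(d,\delta,k_\delta)$.

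For the second stage, Lemma~\ref{lem:boundobservable} gives a uniform bound $|X|=O(dk_\delta/\delta^{d-1})$ on the number of vertices. Once the interrogator has learned $X$, she can compute $W(X)$ and consider the hyperplane arrangement it induces on $S^{d-1}$; the number of top-dimensional strata is bounded by a polynomial in $|X|^2$ (the number of great hyperspheres) of degree $d-1$, via the standard Zaslavsky-type count for arrangements. Substituting $|X|=O(dk_\delta/\delta^{d-1})$ then yields an $O(\cdot)$ bound of the form given in the second summand of $\Delta(d,\delta,k_\delta)$. Sampling one generic direction from each such stratum, Proposition~\ref{prop:deduce} guarantees that the Euler curve (resp.\ persistence diagram) for any other direction in the same stratum is determined, and continuity of $\ECT(K)$ and $\PHT(K)$ (established as part of Proposition~\ref{prop:PHT-ECT} and the lemma following the definition of $\PHT$) allows us to recover the transform over all of $S^{d-1}$. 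The injectivity theorems then finish the argument.

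The main obstacle will be the second-stage count: one must carefully verify that the worst-case number of top-dimensional strata of the sphere arrangement cut out by $W(X)$, with $|X|$ as large as Lemma~\ref{lem:boundobservable} allows, yields exactly the stated asymptotic $O(d^{d+1}k_\delta^{2d}/\delta^{2d(d-1)})$. A secondary subtlety is ensuring that the $(\delta,C)$-net can be chosen with vectors in general position without inflating the cardinality estimate; a small generic perturbation of any $\delta$-net of the stated size suffices, since general position is a full-measure condition and preserves the $(\delta,C)$-net property for any perturbation bounded by a sufficiently small constant relative to $\delta$. Once these two counting issues are handled, the rest of the proof is an assembly of results already proved in the paper.
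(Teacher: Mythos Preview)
Your proposal is correct and follows essentially the same two-stage argument as the paper: locate $X$ via a general-position $(\delta,C)$-net and Proposition~\ref{prop:find_vert}, then sample one direction per top-dimensional stratum of $\Sigma(X)$, interpolate (the paper cites Lemma~\ref{lem:KxU} rather than Proposition~\ref{prop:deduce}, but these are equivalent here), and finish with continuity plus Theorem~\ref{thm:ECT-injects}. The only cosmetic differences are that the paper obtains the $(1+3/\delta)^d$ net bound by quoting Vershynin's estimate with $\delta'=\tfrac{2}{3}\delta$, and counts chambers via the Sauer--Shelah-type bound $\sum_{j\le d}\binom{n(X)}{j}$ (degree $d$, not $d-1$), which is precisely what yields the exponent $2d(d-1)$ in the stated $O(\cdot)$ term---so when you carry out your ``Zaslavsky-type count'' you should use the cruder degree-$d$ bound to match the theorem as written.
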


\begin{proof}
We specify the proof for the ECT case, since the PHT case is identical.

The proof consist of showing that given a finite set of vectors $v_i \in V$ and the corresponding transform values $\ECT(K,v_i)$, we can recover $\ECT(K,v)$ for any direction $v\in S^{d-1}$.
The proof proceeds as follows, after fixing some $K\in \mathcal{K}(d,\delta,k_\delta)$.

\begin{enumerate}
\item[(1)] Set $V$ to be a general position $(\delta,C)$-net in $S^{d-1}$. Here $C$ is the constant from
Proposition \ref{prop:find_vert}, $C=(d-1)\, k_\delta +1.$
\item[(2)] Proposition~\ref{prop:find_vert} also states that the locations of the vertices $X$ of the initial simplicial complex $K$ can be recovered from the Euler characteristic curves generated from the directions in V.
\item[(3)] Now that we have the vertex set $X$ of $K$, consider the hyperplane division $W(X)$ of $S^{d-1}$ and its complement $\Sigma(X)$.
\item[(4)] For each connected component $U_j$ of $\Sigma(X)$ pick a direction $u_j \in U_j$ and evaluate $\ECT(K,u_j)$. 
By Lemma \ref{lem:KxU} we  can deduce the 
 $\ECT(K,w)$ for any $w\in U_j$ from $\ECT(K,u_j)$.
 \item[(5)] Given step (4), by continuity we know $\ECT(K,v)$ for $v\in W(X)$.
\end{enumerate}
\vspace{.1in}

We now specify a bound on the number of directions in the set $V$, which rests on a bound for a $(\delta,C)$-cover of the unit sphere $S^{d-1}$.
We first state how to construct a $(\delta,C)$-net. 
Consider the points $V$ as the centers of a collection of $C$ number $\delta'$-nets with $\delta'=\frac{2}{3}\delta$ (although any $\delta'<\delta$ will suffice). Now perturb all the points in $V$ slightly to put them into general position. 
The result of this procedure is a $(\delta,C)$-net $V$ with points in general position.
 By Lemma 5.2 in \cite{Versh}, the bound on the number of points needed for a $\delta'$-net is $N = \left(1+\frac{2}{\delta'}\right)^d$.
 This then proves the maximum number of directions required to specify the vertex set $X$ for any element $K\in \mathcal{K}(d,\delta,k_\delta)$ is 
$$|V| \leq \left((d-1)\, k_\delta +1 \right) \left(1+\frac{3}{\delta}\right)^d.$$
Finally, we bound the number of directions required by Step (4) above.
We note that $W(X)$ is the union of $n(X)=\binom{|X|}{2}$ hyperplanes.
This divides the sphere up into at most
\[
\sum_{j=0}^d \binom{n(X)}{j} \leq d\left(\frac{e n(X)}{d}\right)^d,
\]
note the above computation is standard computation used in both the uniform law of large numbers of sets \cite{VC} as well as the study of range spaces in discrete geometry 
\cite{AHW}. Recalling Lemma~\ref{lem:boundobservable} and using the fact that $n(X)= O(|X|^2)$, we see that the number of top dimensional strata is 
\begin{align*}
O\left(d\left(\frac{n(X)}{d}\right)^d\right)&=O\left(\frac{|X|^{2d}}{d^{d-1}}\right)\\
&=O\left(\frac{d^{d+1}k_\delta^{2d}}{\delta^{2d(d-1)}}\right).
\end{align*}
This completes the proof of the theorem.
\end{proof}


\section{Future Directions}

We conclude with a discussion of future directions that one might explore. We believe these questions would be intriguing to answer and important for geometry and statistics. 

\begin{itemize}
\item In Sections~\ref{sec:ECT-injects} and~\ref{sec:PHT-injects} we used the class of compact, o-minimal sets to state our results. 
The primary purpose of this choice was to avoid inconsistencies between theory involving the two notions of Euler characteristic---the ordinary and compactly-supported/definable Euler characteristics. For general sets we can not read off the definable Euler characteristic curve from the persistent homology. One direction of inquiry is considering a variant of persistent homology that naturally decategorifies to compactly-supported Euler characteristic. If we are restricted to filtrations only including locally compact sets then a natural choice would be a version of persistent Borel-Moore homology.
It may be possible that we could use the full generality of Schapira's inversion theorem to prove the injectivity of the PHT for any locally compact o-minimal sets. 
One challenge with considering Borel-Moore persistent homology is that Borel-Moore homology is functorial with respect to proper continuous maps, but not to arbitrary continuous maps. 
When we work with non-compact definable sets, the inclusion maps of sublevel sets are continuous but are not proper continuous in general. 
This means we do not get the persistence module structure maps needed here. Another complication is that not all o-minimal sets are locally compact and little is defined, let alone understood, about Borel-Moore homology for sets that are not locally compact.

\item In Section~\ref{sec:stratified} we gave a complete description of the ECT and the PHT of finite simplicial complexes in terms of a stratified space decomposition of the sphere of directions, given by hyperplanes.
We then showed that one can interpolate the ECT by only knowing one curve from each stratum alongside knowing the vertex set. Both of these steps have natural analogs for shapes that aren't cut out by linear inequalities, but instead are determined by algebraic ones. How might one decompose the sphere and interpolate the ECT for shapes cut out using quadratic equations, for example?

\item Continuing the above question to Section~\ref{sec:affine}, one might ask whether more general constructible/semialgebraic sets have pushforward measures that are invariant to actions by $O(d)$.

\item Finally, we note that Section~\ref{sec:finite} used piecewise linearity of the involved shapes in an essential way.
There is no direct way to extend these results for arbitrary constructible sets, but there are a variety of related problems. One option is to explore whether it extends for some well-behaved family of shapes. 
For example, we could ask whether there is such a finiteness result for smooth algebraic manifolds with lower bounds on curvature. Another direction is a reconstruction up to some small error. One might be able to prove that given a set of directions, that we can construct a shape whose Euler curves agree with those for that sample direction such which must be close to the unknown shape with high probability. 
\end{itemize}

\section*{Acknowledgments}
The authors would like to thank the Institute for Mathematics and its Applications (IMA) at the University of Minnesota for hosting the authors during numerous visits. This work was completed at the Special Workshop on Bridging Statistics and Sheaves (SW5.21-25.18).
The authors would also like to thank Ingrid Daubechies, Doug Boyer, Tingran Gao, Robert Ghrist, Yuliy Baryshnikov, Lorin Crawford, Henry Kirveslahti, Yusu Wang, and John Harer for helpful discussions. 
JC would like to thank Kathryn Hess and EPFL for supporting his travel to collaborate with KT on this project. KT is the recipient of an Australian Research Council Discovery Early Career Award (project number DE200100056) funded by the Australian Government. 
JC would also like to thank NSF CCF-1850052 and NASA Contract
80GRC020C0016 for supporting his research.
Finally, SM would also like to thank partial funding from the Human Frontier Science Program, as well as NSF DMS 17-13012, NSF ABI 16-61386, and NSF DMS 16-13261.

Finally, the authors would also like to thank an anonymous referee who made substantial contributions to improving the article, including catching mistakes in earlier drafts and suggesting ways to fix them.
The result of this implicit collaboration is a much stronger paper that makes explicit several results that were only hinted at in earlier versions of the paper.

\newpage

\bibliographystyle{amsalpha}

\begin{thebibliography}{A}

\bibitem{Elevate} P. K. Agarwal, H. Edelsbrunner, J. Harer, and Y. Wang, ``Extreme elevation on a 2-manifold,'' {\em Discrete \& Computational Geometry}, Volume 36, Number 4, 553--572, 2006.

\bibitem{AHW} N. Alon, D. Haussler, and E. Welzl, ``Partitioning and geometric embedding of range spaces of finite Vapnik-Chervonenkis dimension," {\em Proceedings of Symposium on Computational Geometry}, 331-340, 1987.

\bibitem{ET} Y. Baryshnikov, R. Ghrist, and D. Lipsky ``Inversion of Euler integral transforms with applications to sensor data,'' {\em Inverse Problems}, 27 124001 doi:10.1088/0266-5611/27/12/124001

\bibitem{BB-Morse} M. Bestvina and N. Brady, ``Morse theory and finiteness properties of groups,'' {\em Inventiones Mathematicae}, August 1997, Volume 129, Issue 3, pp 445–470.

\bibitem{Puente}  D.M. Boyer, J. Puente, J.T. Gladman, C. Glynn, S. Mukherjee, G.S. Yapuncich, and I. Daubechies ``A New Fully Automated Approach for Aligning and Comparing Shapes," {The Anatomical Record}, Volume  258, Issue 1, https://doi.org/10.1002/ar.23084.

\bibitem{Bubenik-Scott-Stanley} P. Bubenik, J. Scott, D. Stanley. ``An algebraic Wasserstein distance for generalized persistence modules,'' arXiv preprint \url{https://arxiv.org/abs/1809.09654}.

\bibitem{SSPM} F. Chazal, V. de Silva, M. Glisse, S. Oudot, ``Structure and Stability of Persistence Modules,'' {\em Springer Briefs in Mathematics}, https://doi.org/10.1007/978-3-319-42545-0

\bibitem{Bottleneck} D. Cohen-Steiner, H. Edelsbrunner, J. Harer, ``Stability of persistence diagrams,'' {\em Discrete \& Computational Geometry}, Volume  37, Number 1, 103--120, 2007.

\bibitem{Wasserstein} D. Cohen-Steiner, H. Edelsbrunner, J. Harer, Y. Mileyko: ``Lipschitz functions have L p-stable persistence.'' {\em Foundations of Computational Mathematics} 10, 127–139 (2010). https://doi.org/10.1007/s10208-010-9060-6

\bibitem{tumor} L. Crawford, A. Monod, A. Chen, S. Mukherjee, and R. Rabad\'{a}n, ``Functional Data Analysis using a Topological Summary Statistic: the Smooth Euler Characteristic Transform,'' available as \url{https://arxiv.org/abs/1611.06818}.

\bibitem{tda-cosheaves} J. Curry ``Topological Data Analysis and Cosheaves,'' {\em Japan Journal of Industrial and Applied Mathematics}, Volume 32, Issue 2, pp.~333-371, 2015.

\bibitem{EulerTome} J. Curry, R. Ghrist, and M. Robinson, ``Euler Calculus with Applications to Signals and Sensing,'' in {\em Proceedings of Symposia in Applied Mathematics}, Volume 70, 75--146, 2012.

\bibitem{SCA} J. Curry ``Sheaves, Cosheaves and Applications.'' PhD dissertation, University of Pennsylvania, 2014.

\bibitem{RG-ECT-inject} R. Ghrist, R. Levanger, and H. Mai, ``Persistent Homology and Euler Integral Transforms'' {\em Journal of Applied and Computational Topology}, Volume 2, Number 1, 55-60, 2018.

\bibitem{EH-book} H. Edelsbrunner and J. Harer, ``Computational Topology: An Introduction.'' 241 pages. Applied Mathematics series. American Mathematical Society, Providence, RI. 2010.

\bibitem{CritValue} D. Govc ``On the Definition of the homological critical value,'' {\em Journal of Homotopy and Related Structures}, March 2016, Volume 11, Issue 1, pp 143-151.

\bibitem{IH-Book} G. Friedman \emph{Singular Intersection Homology} (New Mathematical Monographs). Cambridge: Cambridge University Press, 2020. doi:10.1017/9781316584446. Citations here use the draft version available at \url{http://faculty.tcu.edu/gfriedman/IHbook.pdf}.

\bibitem{o-min-strat} T. L\^{e} Loi, ``Lecture 2: Stratifications in O-minimal Structures,'' in {\em The Japanese-Australian Workshop on Real and Complex Singularities: JARCS III,} pp. 31--39. Centre for Mathematics and its Applications, Mathematical Sciences Institute, The Australian National University, 2010.

\bibitem{Schapira:OpsOnCFs} P. Schapira, ``Operations on Constructible Functions,'' in {\em Journal of Pure and Applied Algebra}, Volume 72, Issue 1, pp.~83-93, 1 July 1991.

\bibitem{Schapira:tom} P. Schapira, ``Tomography of constructible functions,'' in  {\em 11th Intl. Symp. on Applied Algebra, Algebraic Algorithms and Error-Correcting Codes}, 1995, 427--435.

\bibitem{Alg-Wasserstein} P. Skraba and K. Turner, ``Wasserstein Stability for Persistence Diagrams,'' arXiv preprint arXiv:2006.16824, 2020.


\bibitem{PHT} K. Turner, S. Mukherjee, and D. Boyer ``Persistent Homology Transform for Modeling Shapes and Surfaces,'' {\em Information and Inference: A Journal of the IMA}, Volume 3, Number 4, 310--344, 2014.

\bibitem{Median} K. Turner ``Medians of populations of persistence diagrams,'' {\em Homology, Homotopy and Applications}, Volume 22, Number 1, 255--282, 2020.


\bibitem{vdD} L. Van den Dries, {\em Tame Topology and O-Minimal Structures}, Cambridge University Press, 1998.

\bibitem{VC} V. Vapnik and A. \v{C}ervonenkis ``The uniform convergence of frequencies of the appearance of events to their probabilities," {\em Akademija Nauk SSSR}, Volume 16, 
264-279.

\bibitem{Versh} R. Vershynin ``Introduction to the non-asymptotic analysis of random matrices," in Y.C. Eldar and G. Kutyniok (Eds.), Compressed Sensing: Theory and Applications, Chapter 5, 210--268, Cambridge University Press.

\bibitem{Wai} S.T. Wai, G.M. da Silva, H. Kirveslahti, E. Skeens, B. Feng, T. Sudijono, K.K. Yang, S. Mukherjee, B. Rubenstein, L. Crawford 
``A Topological Data Analytic Approach for Discovering Biophysical Signatures in Protein Dynamics," bioRxiv preprint bioRxiv:2021.07.28.454240, 2021.

\bibitem{Wang} B. Wang, T. Sudijono, H. Kirveslahti, T. Gao, DM. Boyer,  S. Mukherjee, and L. Crawford ``A statistical pipeline for identifying physical features that differentiate classes of 3D shapes," {\em Annals of Applied Statistics}, Volume 15, Number 2, 638-661, 2021. 

\bibitem{Whitney} H. Whitney, ``Local properties of analytic varieties,'' in {\em A Symposium in Honor of M. Morse}, 205--244. Princeton Univ. Press., 1965.

\bibitem{Wilkie96} Alex J. Wilkie. ``Model completeness results for expansions of the ordered field of real numbers by restricted Pfaffian functions and the exponential function,'' {\em Journal of the American Mathematical Society}. Volume 9, Number 4, 1051-1094, 1996.

\end{thebibliography}

\end{document}